\documentclass[10pt]{article}
\addtolength{\hoffset}{-3cm} \addtolength{\textwidth}{5,5cm}
\addtolength{\topmargin}{-1cm}  \addtolength{\textheight}{2,5cm}
\usepackage[english]{babel}
\usepackage{xypic,amsmath,amssymb,amsthm,amssymb}

\bibliographystyle{plain}

\def\move-in{\parshape=1.75true in 5true in}

\def\PP{\mathbb P^}

\def\GG{\mathbb G}
\def\NN{\mathbb N}

\def\pn1d{E}

\def\K1d{K[x_0^{(1)},\ldots ,x_n^{(1)};\ldots ;x_0^{(d)},\ldots ,x_n^{(d)}]}
\def\PP{\mathbb P^}

\def\G{\vec{G}}

\def\min{\mathrm{min}}

\def\codim{\mathrm{codim}}
\def\expdim{\mathrm{expdim}}
\def\Sec{\mathrm{Sec}}

\def\Split{\mathrm{Split}}

\def\Sec{\mathrm{Sec}}

\def\rk{\mathrm{rk}}

\def\qed{\hfill\vbox{\hrule\hbox{\vrule\kern3pt
\vbox{\kern6pt}\kern3pt\vrule}\hrule}\bigskip}

\newtheorem{theorem}{Theorem}[section]
\newtheorem{lemma}[theorem]{Lemma}
\newtheorem{propos}[theorem]{Proposition}
\newtheorem{corol}[theorem]{Corollary}
\newtheorem{defi}[theorem]{Definition}
\newtheorem{ex}[theorem]{Example}

\newtheorem{conj}[theorem]{Conjecture}
\newtheorem{nota}[theorem]{Notation}
\newtheorem{rem}[theorem]{Remark}

\begin{document}

\title{On the variety parametrizing completely decomposable
polynomials}
\date{}
\author{Enrique Arrondo and Alessandra Bernardi}
\maketitle

\begin{quote} Abstract: The purpose of this paper is to relate
the variety parameterizing completely decomposable
homogeneous polynomials of degree $d$ in $n+1$ variables on
an algebraically closed field, called
$\Split_{d}(\PP n)$, with the Grassmannian of $n-1$
dimensional projective subspaces of $\PP {n+d-1}$. We compute
the dimension of some secant varieties to
$\Split_{d}(\PP n)$ and find a counterexample to a conjecture
that wanted its dimension related to the one of the secant
variety to $\GG (n-1, n+d-1)$. Moreover by using an invariant
embedding of the Veronse variety into the Pl\"ucker space, 
we are able to compute the intersection of $\GG (n-1,
n+d-1)$ with
$\Split_{d}(\PP n)$, some of its secant variety, the tangential
variety and the second osculating space to the Veronese
variety.
\end{quote}

\section*{Introduction}

A classic problem inspired by Waring problem in number theory
is the following: which is the least integer $s$ such that a
general homogeneous polynomial of degree $d$ in $n+1$ variables
can be written as $L_1^d+\dots+L_s^d$, where $L_1,\dots,L_s$ are
linear forms? In terms of algebraic geometry, this problem is
equivalent to find the least $s$ such that the $s$-th secant
variety of the $d$-uple Veronese embedding of $\PP n$ is the whole
ambient space. In general, it is interesting to find 
projective varieties with defective secant varieties, i.e. not
having the expected dimension. This problem has been completely
solved by J. Alexander and A. Hirschowitz (see \cite{AH}, or
\cite{BO} for a recent proof with a different approach), who found
all the defective secant varieties to Veronese varieties. Our
original problem can be rephrased in the language of tensors.
Specifically, given an $(n+1)$-dimensional vector space $W$, which
is the least integer $s$ such that a general tensor in $S^dW$ can
be written as a sum of $s$ completely decomposable symmetric
tensors? 

With this new language, it is natural to wonder about the same
problem in the case of tensors not necessarily symmetric. For
example, the case of tensors in $W_1\otimes\dots\otimes W_d$,
yields the question of studying the smallest $s$-th secant variety of
a Segre variety filling up the ambient space (see \cite{AOP1},
\cite{CGG3}, \cite{CGG4} for some known results regarding this
problem). Another interesting problem is the case in which the
tensors are skew-symmetric or, geometrically, the study of the
smallest $s$-th secant variety of a Grassmann variety filling up the
ambient Pl\"ucker space. In this case, the only known examples of
defective $s$-th secant varieties are:
the third secant varieties to  $\GG(2,6)$ --which is also isomorphic to $\GG(3,6)$-- 
and to $\GG(3,7)$
 and the fourth secant varieties to
$\GG (3,7)$ and $\GG(2,8)$ --which is also isomorphic to
$\GG(5,8)$-- (\cite{CGG1},
\cite{McG} and
\cite{AOP2}).

There is a particularly interesting numerical relation among the
different types of tensors we just mentioned. Indeed, the
dimension of the above $S^dW$ is $n+d\choose n$, which coincides
with the dimension of the space $\bigwedge^nW'$ of skew-symmetric
tensors on an $(n+d)$-dimensional vector space $W'$. Therefore the
projectivization of the space of homogeneous polyonomials of
degree $d$ in $n+1$ variables has the same dimension as the
Pl\"ucker ambient space of the Grassmannian $\GG(n-1,n+d-1)$.
Moreover, this Grassmannian has dimension $nd$, which is also
the dimension of the variety, which we will call $\Split_{d}(\PP
n)$, parametrizing those polynomials that decompose as the product
of $d$ linear forms. These coincidences led Ehrenborg to formulate
(see \cite{Eh}) the following

\begin{conj}\label{Ehr} {\rm(\textbf{Ehrenborg})}
The least positive integer $s$ such that the
$s$-th secant variety to $\GG (n-1,n+d-1)$ fills up $\PP
{{n+d\choose d}-1}$
 is the same least $s\in \NN$ such that the
$s$-th secant variety to
$\Split_d(\PP {n})$ fills up $\PP
{{n+d\choose d}-1}$.
\end{conj}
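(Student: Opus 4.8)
The plan is to determine, for each of the two varieties separately, the least $s$ for which the $s$-th secant variety fills the common ambient space $\PP N$, $N=\binom{n+d}{d}-1$, and then to compare the two; call these integers $s_{\mathrm{Gr}}$ and $s_{\mathrm{Sp}}$. On the Grassmannian side almost nothing needs to be done: by the results recalled in the Introduction (see \cite{CGG1}, \cite{McG}, \cite{AOP2}) the dimensions of \emph{all} secant varieties of $\GG(n-1,n+d-1)$ are known, so $s_{\mathrm{Gr}}$ is explicit. It is worth stressing that one cannot finish by the soft observation ``both varieties are irreducible and non-degenerate of dimension $nd$ in $\PP N$, hence $s_{\mathrm{Gr}}=s_{\mathrm{Sp}}=\bigl\lceil (N+1)/(nd+1)\bigr\rceil$'': already for $d=2$ both $\Split_2(\PP n)$ (symmetric matrices of bounded rank) and $\GG(n-1,n+1)$ (skew-symmetric matrices of bounded rank) are highly secant-defective, so their common filling value is \emph{not} the generic one, and the conjecture holds there only because the two are defective in a parallel way. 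The actual content of the conjecture is therefore the computation of $s_{\mathrm{Sp}}$, which is currently open.

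For that I would apply Terracini's lemma. A general point of $\Split_d(\PP n)$ has the form $[F]=[L_1\cdots L_d]$ with $L_1,\dots,L_d$ general linear forms in $S:=K[x_0,\dots,x_n]$, and its affine tangent space is $\sum_{i=1}^{d} S_1\cdot\bigl(\prod_{j\neq i}L_j\bigr)$, i.e. the degree-$d$ homogeneous component $I_d$ of the ideal $I=I(L_1,\dots,L_d)$ generated by the $d$ partial products $\prod_{j\neq i}L_j$ (each of degree $d-1$). Hence $\dim\sigma_s\bigl(\Split_d(\PP n)\bigr)+1$ equals $\dim J_d$, where $J=I(L^{(1)})+\dots+I(L^{(s)})$ for $s$ general systems of linear forms $L^{(a)}=(L^{(a)}_1,\dots,L^{(a)}_d)$. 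The zero locus of the generators of $I(L^{(a)})$ is the union of the $\binom d2$ codimension-two linear subspaces $\{L^{(a)}_i=L^{(a)}_j=0\}$ of $\PP n$, so computing $\dim J_d$ is a postulation (Hilbert-function) problem for a very special, and in general non-reduced, configuration of unions of linear spaces, and the conjecture becomes the assertion that, for general linear forms, the least $s$ with $J_d=S_d$ equals $s_{\mathrm{Gr}}$.

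To establish such a postulation statement the natural tool is the Horace (Castelnuovo specialisation) method, in its differential refinement when plain specialisation is too coarse, as in the proof of the Alexander--Hirschowitz theorem \cite{AH}: specialise an appropriate number of the configurations onto a fixed hyperplane $H\cong\PP{n-1}$, split $J$ into its trace on $H$ and its residual ideal in $\PP n$, recognise the trace as a configuration of the same type for $\Split_d(\PP{n-1})$ and the residual as one for a smaller value of $d$, and run a double induction on $n$ and $d$, the base cases $n\le 2$ and $d\le 2$ being accessible by classical plane-curve and determinantal geometry. I would not try instead to compare $\Split_d(\PP n)$ and $\GG(n-1,n+d-1)$ directly inside $\PP N$ through the invariant embedding discussed below: their intersection has dimension strictly smaller than $nd$, hence carries far too little information to transfer non-degeneracy of the top secant variety from one variety to the other, and no monotone inequality between the two sequences of secant dimensions appears to be available.

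The hard part is exactly the postulation statement of the second paragraph. The schemes cut out by the ideals $I(L^{(a)})$ are genuinely non-generic unions of linear spaces, whose Hilbert functions obey no general theorem; a specialisation of some of them onto a hyperplane need not preserve the combinatorial type of the configuration, and ``expected dimension'' may legitimately fail. Indeed, the computations carried out in this paper show that $\dim\sigma_s\bigl(\Split_d(\PP n)\bigr)$ and $\dim\sigma_s\bigl(\GG(n-1,n+d-1)\bigr)$ do not agree for all intermediate $s$, so no purely numerical argument can succeed, and any complete proof must locate and dispose of the exceptional triples $(n,d,s)$ by hand; one cannot even exclude a priori that $s_{\mathrm{Sp}}\neq s_{\mathrm{Gr}}$ for some $(n,d)$, in which case the conjecture as stated would be false. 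A realistic intermediate target of this programme is thus to prove the conjecture for all $d$ when $n$ is small (and, symmetrically, for all $n$ when $d$ is small) and to reduce the general case to a finite list of postulation problems.
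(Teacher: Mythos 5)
The fundamental problem is that the statement you set out to prove is false, and this very paper exists largely to show that: Conjecture \ref{Ehr} is recorded only to be refuted. For $(n,d)=(3,4)$ the paper's Example \ref{contraex} gives the counterexample: $\Sec_{2}(\GG(2,6))$ is defective (dimension $33$ instead of $34$), so the least $s$ on the Grassmannian side is $4$, whereas $\Sec_{2}(\Split_4(\PP 3))=\PP{34}$, so the least $s$ on the Split side is $3$; further cases ($\Split_3(\PP 4)$, $\Split_4(\PP 4)$, $\Split_6(\PP 3)$, $\Split_3(\PP 6)$) behave the same way. Consequently no refinement of the Terracini-plus-Horace programme you outline can close the argument: at best it would compute $s_{\mathrm{Sp}}$, and in the cases just quoted $s_{\mathrm{Sp}}\neq s_{\mathrm{Gr}}$. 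You half-acknowledge this in your last sentence, but a proposal whose conclusion is ``the conjecture as stated may be false'' is not a proof of the conjecture; the honest outcome of your own strategy, carried out for $(n,d,s)=(3,4,3)$, is exactly the disproof the paper obtains. The only situation in which the paper establishes the statement is $d=2$ (Proposition \ref{Eh true}), and there the argument is not a postulation induction at all: both secant varieties are identified with loci of bounded-rank symmetric, respectively skew-symmetric, matrices, and the two codimensions coincide, both being ${n+2-2s\choose 2}$.

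Two further inaccuracies in your setup are worth flagging. First, the dimensions of all secant varieties of $\GG(n-1,n+d-1)$ are not known; what is known is the short list of defective examples cited in the introduction (\cite{CGG1}, \cite{McG}, \cite{AOP2}) together with the Catalisano--Geramita--Gimigliano conjecture that this list is complete, so even $s_{\mathrm{Gr}}$ is not ``explicit by known results.'' Second, your Terracini description of the tangent space to $\Split_d(\PP n)$ at $[L_1\cdots L_d]$ as $\sum_i S_1\cdot\prod_{j\neq i}L_j$ is correct and is essentially equivalent to the fat-point formulation the paper actually uses (Corollary \ref{corTerracini}, via $2$-fat points on $\PP{}(R_1)\times\dots\times\PP{}(R_1)$ and symmetric multidegree-$(1,\dots,1)$ forms); the paper exploits exactly such computations (Lemma \ref{forme}, Proposition \ref{proposizione1}, and CoCoA checks) — but to prove non-defectivity of specific secant varieties of $\Split_d(\PP n)$ and thereby contradict Ehrenborg's prediction, not to confirm it.
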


If this were true, defective secant varieties to Grassmannians
would also produce defective secant varieties to Split
varieties. It is easy to see that, if $d=2$, then the
conjecture is true (Proposition \ref{Eh true}).
Unfortunately, the other possible defective cases coming from 
Grassmannians, namely the third secant varieties to
$\Split_{4}(\PP 3)$, to $\Split_{3}(\PP 4)$ and to $\Split_{4}(\PP
4)$, and the fourth secant varieties to
$\Split_{4}(\PP 4)$, $\Split_{6}(\PP 3)$ and $\Split_{3}(\PP 6)$,
are not defective (Example \ref{contraex}). 
In particular, we get that Ehrenborg's conjecture is not true.

The starting point of this paper was to understand until which
extent Ehrenborg's conjecture remains true
and to find if there is actually a relation between the defectivity of secant varieties to  Grassmannians and the defectivity of secant varieties of Split varieties. Since M.V. Catalisano, A.V.
Geramita and A. Gimigliano conjecture in \cite{CGG1} that the only
defective secant varieties to Grassmannians are those listed
above, one could conjecture that any Split variety with $d\ne2$ has
regular secant varieties (i.e. with the expected dimension). In
fact, we were not able to find any defective case.

We thus turn to the core of Ehrenborg's conjecture and study
what is behind the numerical coincidence. Our main idea is to
identify the $(n+1)$-dimensional vector space $W$ with
$S^nV$, where $V$ is a two-dimensional vector space. Then we
use the well known isomorphism between
$\bigwedge^{d}(S^{n+d-1}V)$ and $ S^{d}(S^{n}V)$ (see
\cite{Mu}), which has a nice and classical interpretation.
Precisely, the $d$-uple Veronese variety is naturally
embedded in $\GG (n-1, n+d-1)$ as the set of
$n$-secant spaces to the rational normal curve in $\PP
{n+d-1}$. This allows to consider $\GG (n-1, n+d-1)$ and
$\Split_{d}(\PP n)$ as subvarieties of the same projective space.
Depending on the context, we will regard this space as the
Pl\"ucker space of $\GG (n-1, n+d-1)$ or the projective space
parametrizing classes of homogeneous polynomials of degree $d$ in
$n+1$ variables.

With the point of view of homogeneous polynomials, we observe
(Remark \ref{Split-osc}) that points of $\Split_{d}(\PP n)$ are
characterized by belonging to certain osculating spaces to the
Veronese variety. Hence, in order to completely understand
$\Split_{d}(\PP n)$ we will need to first understand these
osculating spaces.

The goal of this paper is to use the previous identification to
compare $\Split_{d}(\PP n)$ --or any other variety related to it,
like osculating spaces to the Veronese variety-- with
$\GG (n-1, n+d-1)$. In particular, intersecting those varieties
with $\GG (n-1, n+d-1)$, we can regard the corresponding types of
polynomials as
$(n-1)$-dimensional linear subspaces of $\PP {n+d-1}$.

We like to recall that $\Split_{d}(\PP n)$ is often called in the literature the ``\emph{Chow variety of zero cycles}'' in fact it can be also interpreted as the projection of the Segre Variety $Seg(\PP n \times \cdots \times \PP n)$ from the $GL(V )$-complement to $S^{d}V$ in $V^{\otimes d}$ to $\PP {}(S^{d}V)$ itself (see \cite{GKZ} for a wide description of Chow varieties and \cite{C} for a recent use of those variety to study the ``codimension one decomposition''). 

We start the paper with section  \ref{prelim}, in which we
introduce the preliminaries and give some first results
about $\Split_{d}(\PP n)$ without still using its relation
with $\GG (n-1, n+d-1)$. More precisely, we prove the
regularity of the secant varieties to $\Split_{d}(\PP n)$ in a
certain range not depending on $d$ (Proposition
\ref{proposizione1}). We also include in this section our
counterexample to Ehrenborg's conjecture.

In section \ref{vero e GG}, we first describe in coordinates the
embeddings of the Veronese variety, $\Split_{d}(\PP n)$ and $\GG
(n-1, n+d-1)$ in the same projective space. This allows us to give
a first general result about the intersection of $\Split_{d}(\PP
n)$ and $\GG (n-1, n+d-1)$ (Proposition \ref{un contenimento}),
which we can improve in the case $d=3$ (Proposition \ref{Xn+1
in Split}). We end with Example \ref{non in Split3P2} (which we will need later on), in which we use this geometric description to show that some particular elements of $\GG (n-1, n+d-1)$ cannot be in
$\Split_{d}(\PP n)$.

In section \ref{tg} we study the intersection between $\GG
(n-1,n+d-1)$ and the tangential variety to the $d$-uple
Veronese variety. We arrive to the precise intersection in
Corollary
\ref{tau and grass}. Since this tangential variety parameterizes
classes of homogeneous polynomials that can be written as
$L^{d-1}M$ (where $L$ and $M$ are linear forms) we can also give a
necessary condition on $M$ for $L^{d-1}M$ to represent an element
of $\GG(n-1,n+d-1)$ (Proposition \ref{grazie}). As a consequence
of the results of this section, we can compute the intersection of
$\GG (n-1,n+d-1)$ and $\Split_{d}(\PP n)$ when $d=2$.

In order to compute the above intersection when $d=3$, we will
need to study first the intersection between
$\GG (n-1,n+d-1)$ and the  second osculating space to the
Veronese variety, to which we devote  section
\ref{osculating}  (see Theorem \ref{osc 2 alla veronese} for the
precise result). With the result of this section, we eventually
give in section \ref{Split e Grass} the intersection between
$\GG (n-1,n+d-1)$ and $\Split_{d}(\PP n)$ when $d=3$ (Theorem
\ref{split3conGrass}).

We end this paper with an appendix in which we give various results
about the intersection of $\GG(n-1,n+d-1)$ with several secant
varieties to the $d$-uple Veronese variety. In particular, we
completely describe this intersection when $d=2$ and for any
secant variety. We include this appendix, even if sometimes we
just sketch the proofs, because the results we got give an idea of
how the techniques introduced in the paper can be useful.

We like to thank Silvia Abrescia for the many and useful
conversations and Maria Virginia Catalisano for suggestions
and ideas. 

During the preparation of this work, the first
author was supported by the Spanish project number
MTM2006-04785; the second author was supported by MIUR and by funds from the University of Bologna.

\section{Preliminaries and first results}\label{prelim}

Throughout all the paper, the symbol $\PP n$ will denote the
projective space over an algebraically closed field $K$ of
characteristic zero, and we will fix a system of homogeneous
coordinates $x_0,\ldots,x_n$. We also write $\GG (k,d+k)$ for the
\emph{Grassmannian of $k$-spaces in $\PP{d+k}$} and $\G(k,V)$
for the \emph{Grassmannian of $k$-spaces in $V$}. 

We will indicate for brevity the polynomial ring $K
[x_0,\ldots ,x_n]$ with $R$ and its homogeneous part of
degree $d$ with $R_d$. With this notation, $\PP{}(R_d)$ is
naturally identified with the set of hypersurfaces of degree $d$
in $\PP n$ and, in particular, $\PP{}(R_1)$ is identified
with ${(\PP n)}^*$. 

\begin{defi}{\rm
The \emph{Veronese variety} is the subset of
$\PP{}(R_d)$ parametrizing $d$-uple hyperplanes, i.e. classes of
forms that are a $d$-th power of linear forms. We will write
$\Split_d(\PP n)$ for the subset of hypersurfaces
that are the union of $d$ hyperplanes.
}\end{defi}

\begin{rem} \label{coordinate}\rm{
If we use as homogeneous coordinates for $\PP{}(R_d)$ the
coefficients of the monomials, the $d$-uple Veronese embedding 
$$\begin{array}{rcccl}\nu_d:&\PP {}
(R_1)&\hookrightarrow& \PP {}
(R_d)&=\PP {{n+d\choose d} -1}\\
&[L]&\mapsto&[L^d].&\end{array}$$ 
(whose image is the Veronese variety) can be written as
$$ (u_0: \ldots : u_n)  \mapsto  (u_0^d: u_0^{d-1}u_1:
u_0^{d-1}u_2: \ldots : u_n^d).$$

Similarly, $\Split_d(\PP n)$ is the image of the finite map
(of degree $d!$):
$$\begin{array}{rcccl}
\phi:&\PP{}(R_1)\times
\smash{\mathop{\ldots}\limits^{d}} \times
\PP{}(R_1)&\hookrightarrow&\PP{}(R_d)\\
&([L_1],\ldots ,[L_d])&\mapsto&[L_1\cdots L_d]
\end{array}$$ 
which sends the point $([u_{0}^{(1)},\ldots ,
u_{n}^{(1)}], \ldots , [u_{0}^{(d)}, \ldots , u_{n}^{(d)}
])$ to the point whose coordinates form the canonical
basis of the space $V$ of symmetric forms of
$K[u_0^{(1)},\ldots ,u_n^{(1)};\ldots ;u_0^{(d)},\ldots
,u_n^{(d)}]$ of multidegree $(1,\dots,1)$. Hence, 
$\Split_d(\PP n)$ has dimension
$nd$ and it is the image of $\PP{}(R_1)\times
\dots\times\PP{}(R_1)$ under the linear subsystem $V\subset
H^0({\cal O}_{\PP{}(R_1)\times
\dots\times\PP{}(R_1)}(1,\dots,1))$ of symmetric forms.
When $d=2$, $\Split_2(\PP n)$ can also be
regarded as the set of classes of $(n+1)\times(n+1)$
symmetric matrices of rank at most two.
}\end{rem}

\begin{defi}{\rm 
If $X\subset \PP N$ is a projective
variety of dimension $n$ then 
its  \emph{$s$-th Secant Variety} is defined as follows: 
$$\Sec_{s-1}(X):=\overline{\bigcup_{P_1,\ldots ,P_s\in X}<P_1,\ldots ,P_s>}.$$ 
Its expected dimension is 
$$\expdim (\Sec_{s-1}(X))=\min\{N, sn+s-1\}$$
but this is not always equal to $\dim  (\Sec_{s-1}(X))$
in fact there are many exceptions. When $\delta_{s-1}=\expdim
(\Sec_{s-1}(X))-
\dim(\Sec_{s-1}(X))>0$ we will say that $\Sec_{s-1}(X)$ is {\it
defective} and $\delta_{s-1}$ is called {\it defect}.
}\end{defi}

Before starting the study on the dimension of secant varieties
of Split varieties we need to introduce some important
instruments classically utilized to study secant varieties.

\begin{defi}\label{fatpoint}\rm{
If $X\subset \PP N$ is an irreducible projective variety, an
{\it $m$-fat point} (or an {\it $m$-th point}) on $X$ is the
$(m-1)$-th infinitesimal neighborhood of a smooth point $P\in
X$ and it will be denoted by $mP$ (i.e. it is the projective
scheme $mP$ defined by the ideal sheaf ${\cal
I}^{m}_{P,X}\subset {\cal O}_{X}$).}
\end{defi}

If $\dim(X)=n$ then an $m$-fat point $mP$ on $X$ is a
$0$-dimensional scheme of length ${m-1+n\choose n}$. If $Z$ is
the union of the $(m-1)$-th infinitesimal neighborhoods in
$X$ of $s$ generic smooth points on $X$, we will say for short
that $Z$ is the union of $s$ generic $m$-fat points on $X$.

The most useful (and classical) theorem for the computation
of the dimension of a secant variety of a projective variety
is the so called {\it Terracini's Lemma}.
\begin{theorem} {\rm (\textbf{Terracini's Lemma})}\label{Terracini}
Let $X$  be an irreducible variety in $\PP N$, and let
$P_1,\ldots ,P_s$  be $s$ generic points on
$X$. Then, the projectivized tangent space to $\Sec_{s-1}(X)$ at a generic point $Q\in <P_1,\ldots ,P_s>$  is the linear span in $\PP N$ of the tangent spaces $T_{P_i}(X)$ to $X$ at $P_i$, $i=1,\ldots ,s$, i.e.
$$ T_{Q}(\Sec_{s-1}(X)) = <T_{P_1}(X),\ldots ,T_{P_s}(X)>.$$
\end{theorem}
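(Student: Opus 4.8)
The plan is to realize $\Sec_{s-1}(X)$ as the closure of the image of an explicit parametrization and then to compute the differential of that parametrization at a general point. Working with affine cones, let $\hat X\subseteq\AAf{N+1}$ be the affine cone over $X$ and consider the morphism
$$\hat\psi\colon\hat X\times\cdots\times\hat X\longrightarrow\AAf{N+1},\qquad(v_1,\ldots,v_s)\longmapsto v_1+\cdots+v_s.$$
Since $\hat X$ is a cone, the projectivization of $\mathrm{image}(\hat\psi)$ is exactly $\bigcup_{P_1,\ldots,P_s\in X}\langle P_1,\ldots,P_s\rangle$, so its closure is $\Sec_{s-1}(X)$ and $\hat\psi$ is dominant onto the affine cone $\widehat{\Sec_{s-1}(X)}$. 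If $P_1,\ldots,P_s$ are general points of $X$ and $Q$ is a general point of $\langle P_1,\ldots,P_s\rangle$, then, the incidence variety $\{(P_1,\ldots,P_s,Q):Q\in\langle P_1,\ldots,P_s\rangle\}$ being irreducible and dominating $\Sec_{s-1}(X)$, $Q$ is a general, hence smooth, point of $\Sec_{s-1}(X)$; I will use that $\dim T_Q(\Sec_{s-1}(X))=\dim\Sec_{s-1}(X)$.

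The first (and easy) inclusion to establish is $\langle T_{P_1}(X),\ldots,T_{P_s}(X)\rangle\subseteq T_Q(\Sec_{s-1}(X))$. Fix $i$, pick lifts $v_j\in\hat X$ of the $P_j$ with $v_1+\cdots+v_s$ a lift of $Q$, and let $\gamma(t)$ be any curve in $\hat X$ with $\gamma(0)=v_i$. Then $t\mapsto v_1+\cdots+\gamma(t)+\cdots+v_s$ is a curve in $\widehat{\Sec_{s-1}(X)}$ through a lift of $Q$, so its velocity $\gamma'(0)$ lies in $\hat T_Q(\Sec_{s-1}(X))$. Letting $\gamma$ vary gives $\hat T_{v_i}(\hat X)\subseteq\hat T_Q(\Sec_{s-1}(X))$ for every $i$; projectivizing and taking spans yields the claimed inclusion. (Varying instead the scalars recovers the directions along the $v_i$ themselves, but these already lie in the $\hat T_{v_i}(\hat X)$.)

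For the reverse inclusion I would just compare dimensions. The differential of $\hat\psi$ at $(v_1,\ldots,v_s)$ sends $(\dot v_1,\ldots,\dot v_s)$ to $\dot v_1+\cdots+\dot v_s$ with $\dot v_i\in\hat T_{v_i}(\hat X)$, so $\mathrm{image}(d\hat\psi)=\hat T_{v_1}(\hat X)+\cdots+\hat T_{v_s}(\hat X)$, whose projectivization is $\langle T_{P_1}(X),\ldots,T_{P_s}(X)\rangle$. Because the base field has characteristic zero, the dominant morphism $\hat\psi$ is smooth at a general point of its source (generic smoothness), and since the rank of $d\hat\psi$ depends on $(v_1,\dots,v_s)$ only through $(P_1,\dots,P_s)$ (tangent spaces to a cone are scale invariant), this rank equals $\dim\widehat{\Sec_{s-1}(X)}$ for general $P_1,\dots,P_s$. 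Hence $\dim\langle T_{P_1}(X),\ldots,T_{P_s}(X)\rangle=\dim\Sec_{s-1}(X)=\dim T_Q(\Sec_{s-1}(X))$, and together with the inclusion already proved this forces equality.

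The step I expect to be the crux is this last dimension count, namely the appeal to generic smoothness to identify $\dim\Sec_{s-1}(X)$ with the rank of $d\hat\psi$: this is precisely where the genericity of $P_1,\ldots,P_s$ (and of $Q$) and the characteristic-zero hypothesis are indispensable, since over a general tuple the rank of the differential is maximal and matches the dimension of the image, whereas at special tuples it may drop. A minor point to take care of is checking that a general point of a general secant $\PP{s-1}$ is genuinely a general point of $\Sec_{s-1}(X)$, which is the irreducibility/dominance remark made above.
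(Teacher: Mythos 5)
Your argument is correct, but note that the paper does not actually prove Terracini's Lemma: it simply refers the reader to \cite{Te} and \cite{Ad}. What you have written is essentially the classical characteristic-zero proof found in those references: parametrize the cone over $\Sec_{s-1}(X)$ by the sum map $\hat\psi$ on $s$ copies of the affine cone $\hat X$, observe that the image of $d\hat\psi$ at $(v_1,\ldots,v_s)$ is $\hat T_{v_1}(\hat X)+\cdots+\hat T_{v_s}(\hat X)$, get one inclusion for free because $\hat\psi$ maps into the cone over the secant variety, and get the other by generic smoothness, which is exactly where characteristic zero and the genericity of $(P_1,\ldots,P_s,Q)$ enter. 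Your handling of the two delicate points is right: the incidence-variety remark legitimately upgrades ``general point of a general span'' to ``general, hence smooth, point of $\Sec_{s-1}(X)$'', and the scale-invariance of tangent spaces to a cone lets you pass from a general point of $\hat X^{\times s}$ to general $(P_1,\ldots,P_s)$. Two small technical remarks you may wish to make explicit: generic smoothness should be applied after restricting to the (dense open) smooth locus of $\hat X^{\times s}$, which is harmless since the $P_i$ are general and hence smooth points of $X$; and in the easy inclusion, recovering all of $\hat T_{v_i}(\hat X)$ from velocities of arcs uses that $v_i$ is a smooth point of $\hat X$ (alternatively, argue directly with $d\hat\psi$, which avoids arcs altogether). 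Also, writing a lift of $Q$ as $v_1+\cdots+v_s$ uses that a generic $Q$ in the span has all coefficients nonzero, which you implicitly assume and is fine. So the proposal stands as a complete proof where the paper offers only a citation.
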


\begin{proof}
For a proof see \cite{Te} or \cite{Ad}.
\end{proof}

{}From Terracini's Lemma we immediately get a way of checking
the defectivity of secant varieties. We include the precise
result for $\Split_d(\PP n)$, although the same technique
works for arbitrary varieties with a generically finite map
to a projective space.

\begin{corol}\label{corTerracini} The secant variety
$Sec_{s-1}\left(\Split_d(\PP n)\right)$ is not defective if
and only if $s$ general $2$-fat points on $\PP{}(R_1)\times
\smash{\mathop{\ldots}\limits^{d}}\times\PP{}(R_1)$ impose
$\min\{s(dn+1),{n+d\choose d}\}$ independent conditions to
the linear system $V$ of symmetric forms of multidegree
$(1,\dots,1)$ in $K[u_0^{(1)},\ldots ,u_n^{(1)};\ldots ;u_0^{(d)},\ldots
,u_n^{(d)}]$.
\end{corol}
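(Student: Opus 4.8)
The plan is to obtain the corollary as a direct reformulation of Terracini's Lemma, via the classical dictionary between tangent spaces to the image of a linear system and conditions imposed by fat points. First I would fix notation: write $X=\Split_d(\PP n)\subseteq\PP N$ with $N=\binom{n+d}{d}-1$, and recall from Remark \ref{coordinate} that $X$ is irreducible with $\dim X=dn$, and that $X$ is the image of $Y:=\PP{}(R_1)\times \smash{\mathop{\ldots}\limits^{d}}\times\PP{}(R_1)$ under the morphism $\phi$ attached to the linear system $V$ of symmetric multidegree $(1,\ldots,1)$ forms, where $\dim V=\binom{n+d}{d}=N+1$. Since $\dim X=dn$, the expected dimension of $\Sec_{s-1}(X)$ is $\min\{N,\,s(dn+1)-1\}$, so ``$\Sec_{s-1}(X)$ is not defective'' means precisely that $\dim\Sec_{s-1}(X)$ equals this number.

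Next I would pick general points $p_1,\ldots,p_s\in Y$, set $P_i=\phi(p_i)$, and let $Z\subset Y$ be the union of the $2$-fat points $2p_1,\ldots,2p_s$; since $\dim Y=dn$, each $2p_i$ has length $dn+1$, so $Z$ has length $s(dn+1)$. Because $\phi$ is generically finite onto $X$ (of degree $d!$) and the ground field has characteristic zero, $\phi$ is unramified at a general point; hence each $P_i$ is a smooth point of $X$, and the affine tangent space $\widehat{T_{P_i}X}$ is the image of the differential $d\phi_{p_i}$, i.e. the span of the values at $p_i$ of a basis of $V$ together with their first partial derivatives. By the classical description of the tangent space to the image of a linear system, the span $\langle T_{P_1}X,\ldots,T_{P_s}X\rangle$ then has dimension $\dim V-\dim\bigl(V\cap H^0(Y,{\cal I}_Z(1,\ldots,1))\bigr)-1$; equivalently, $\dim\langle T_{P_1}X,\ldots,T_{P_s}X\rangle+1$ is the number of conditions that $Z$ imposes on $V$. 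Invoking Terracini's Lemma (Theorem \ref{Terracini}), which identifies $T_Q\Sec_{s-1}(X)$ with $\langle T_{P_1}X,\ldots,T_{P_s}X\rangle$ for $Q$ general in $\langle P_1,\ldots,P_s\rangle$, I get that $\dim\Sec_{s-1}(X)+1$ equals the number of conditions imposed by $Z$ on $V$.

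Finally, since a $0$-dimensional scheme of length $\ell$ imposes at most $\min\{\ell,\dim V\}$ conditions on $V$ --- here at most $\min\{s(dn+1),\binom{n+d}{d}\}$ --- this re-proves the inequality $\dim\Sec_{s-1}(X)\le\min\{N,\,s(dn+1)-1\}$ and shows that equality, i.e. non-defectivity of $\Sec_{s-1}(X)$, holds if and only if the union of $s$ general $2$-fat points on $\PP{}(R_1)\times \smash{\mathop{\ldots}\limits^{d}}\times\PP{}(R_1)$ imposes the maximal number $\min\{s(dn+1),\binom{n+d}{d}\}$ of independent conditions on $V$. That is exactly the claimed equivalence.

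The dimension bookkeeping and the appeal to Terracini are routine; the delicate point is the identification in the second paragraph --- verifying that $\phi$ is an immersion at a general point (so that $T_{P_i}X$ genuinely has dimension $dn$ and coincides with $\mathrm{im}\,d\phi_{p_i}$) and matching the linear span of these tangent spaces with the degree-$d$ forms ``vanishing doubly'' along $Z$ in the sense of the linear system $V$. This is the standard engine behind all Terracini-type computations, but one must keep track of the fact that $\phi$ has degree $d!$ and is not an embedding, and of the duality between $\PP{}(R_d)$ and $V$.
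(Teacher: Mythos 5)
Your proof is correct and follows essentially the same route as the paper's: Terracini's Lemma plus the standard identification of hyperplanes containing $T_{P_i}(\Split_d(\PP n))$ with forms of $V$ vanishing on the $2$-fat point at a preimage $p_i$, the only cosmetic difference being that you work with fat points upstairs on $\PP{}(R_1)\times\dots\times\PP{}(R_1)$ from the outset, whereas the paper first phrases the condition via fat points on $\Split_d(\PP n)$ and then pulls back along $\phi$, invoking the symmetry of the forms of $V$ to pass to a single preimage per point. Both arguments hinge on the same two facts (generic smoothness of $\phi$ in characteristic zero and $\dim\Split_d(\PP n)=dn$), so the proofs are interchangeable.
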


\begin{proof}
By Terracini's Lemma, $\dim (\Sec_{s-1}(\Split_d(\PP n)))
=\dim (<T_{P_1}(\Split_d(\PP n)),\ldots
,T_{P_s}(\Split_d(\PP n))>)$, with
$P_1,\ldots ,P_s$ general points of $\Split_d(\PP n)$.
Since the hyperplanes of $\PP {{n+d\choose d} -1}$
containing $T_{P_i}(\Split_d(\PP n))$ are those containing
the fat point $2P_i$ on $\Split_d(\PP n)$, it follows that $\dim (\Sec_{s-1}(\Split_d(\PP n)))
={n+d\choose d} -1-h^0({\cal I}_Z(1))$, where $Z$ is the
scheme union of the fat points $2P_1,\dots,2P_s$. 

On the other hand, by Remark \ref{coordinate}, $\Split_d(\PP
n)$ is the image of $\PP{}(R_1)\times
\smash{\mathop{\ldots}\limits^{d}}\times\PP{}(R_1)$ by the
finite map $\phi$ determined by $V$. Therefore $h^0({\cal
I}_Z(1))$ is the dimension of the space of forms in $V$
vanishing on $\phi^{-1}(Z)$. By the symmetry of the forms of
$V$, it is enough to take preimages $P'_1,\dots,P'_s$ of
$P_1,\dots,P_s$ by $\phi$, and $h^0({\cal
I}_Z(1))$ is still the dimension of the forms of $V$
vanishing at $2P'_1,\dots,2P'_s$. The result follows now at
once.
\end{proof}

{}From this corollary, we can prove directly the
non-defectivity of several secant varieties to $\Split_d(\PP
n)$. We start from a technical result.

\begin{lemma} \label{forme} Let
$Q_1,\dots,Q_d,P_1,\dots,P_n\in\PP{}(R_1)=\PP n$ be a
set of points in general position. Then there  exist
$dn+1$ symmetric forms $F,F_{ij}\in K[u_0^{(1)},\ldots
,u_n^{(1)};\ldots ;u_0^{(d)},\ldots ,u_n^{(d)}]$, with
$i=1,\dots,n$ and
$j=1,\dots,d$, of multidegree
$(1,\dots,1)$, such that:
\begin{enumerate}
\item[(i)] $F(Q_1,\dots,Q_d)\ne0$ while
$F(P_i,A_2,\dots,A_d)=0$ for any $i=1,\dots,n$ and any
$A_2,\dots,A_d\in\PP{}(R_1)$.
\item[(ii)] $F_{ij}(P_k,A_2,\dots,A_d)=0$ for any
$i,k=1,\dots,n$, $j=1,\dots,d$, $k\ne i$ and
$A_2,\dots,A_d\in\PP{}(R_1)$.
\item[(iii)] $F,F_{11},\dots,F_{nd}$ are independent modulo
$I^{2}$, where $I\subset K[u_0^{(1)},\ldots
,u_n^{(1)};\ldots ;u_0^{(d)},\ldots ,u_n^{(d)}]$ is the multihomogeneous ideal of
$(Q_1,\dots,Q_d)$ in $\PP{}(R_1)\times
\dots\times\PP{}(R_1)$.
\end{enumerate}
\end{lemma}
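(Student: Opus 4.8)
The plan is to write the forms down explicitly. I identify $V$ — the space of symmetric forms of multidegree $(1,\dots,1)$ — with the space $S^d(R_1^*)$ of symmetric $d$-linear forms on $R_1$, so that an $h\in V$ may be evaluated at a $d$-tuple of points of $\PP{}(R_1)$ (through representatives in $R_1$), a product $\chi_1\cdots\chi_d$ of functionals $\chi_m\in R_1^*$ corresponding to $(A_1,\dots,A_d)\mapsto\sum_\sigma\prod_m\chi_m(A_{\sigma(m)})$; under this identification the forms that vanish whenever one of their entries is a fixed point $[p]\in\PP{}(R_1)$ are precisely those in $S^d(p^\perp)$, with $p^\perp\subset R_1^*$ the hyperplane of functionals killing $p$. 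I would fix representatives $q_r,p_i\in R_1$, take $\xi\in R_1^*$ (unique up to scalars) vanishing on $\langle p_1,\dots,p_n\rangle$ — a hyperplane of $R_1$, since the $p_i$ are in general position — and set $F:=\xi^d$; general position gives $\xi(q_r)\neq0$, so $F(Q_1,\dots,Q_d)=\prod_r\xi(q_r)\neq0$ while $F(P_i,A_2,\dots,A_d)=\xi(p_i)\prod_{\ell\ge2}\xi(A_\ell)=0$ for all $A_\ell$, which is (i). For each $i$ I would let $\Lambda_i\subset R_1^*$ be the pencil of functionals vanishing at all $p_k$ with $k\neq i$ (a $2$-dimensional space, since the $n-1$ forms $p_k$, $k\neq i$, are independent, and it contains $\xi$), pick $m_i\in\Lambda_i$ with $m_i(p_i)\neq0$, and set $F_{ij}:=\xi^{d-j}m_i^{\,j}$ for $1\le j\le d$. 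As $\xi,m_i\in p_k^\perp$ for every $k\neq i$, each $F_{ij}$ lies in $S^d(\Lambda_i)\subseteq S^d(p_k^\perp)$, which is (ii); there are $dn+1$ forms in all.

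For (iii) I would first note that $\xi,m_1,\dots,m_n$ form a basis of $R_1^*$: the matrix $\bigl(m_i(p_j)\bigr)$ is diagonal with nonzero diagonal, so the $m_i$ restrict to a basis of $\langle p_1,\dots,p_n\rangle^*$, while $\xi$ spans the annihilator $\langle p_1,\dots,p_n\rangle^\perp$. In the dual coordinates $y_0,\dots,y_n$ the forms become the $dn+1$ distinct monomials $y_0^d$ and $y_0^{d-j}y_i^{\,j}$, hence are linearly independent in $V$. Next, a form $h\in V$ has a double point at $Q=(Q_1,\dots,Q_d)\in\PP{}(R_1)^{\times d}$ if and only if $h(Q)=0$ and, for every slot $r$, the partial $\partial_r h\in R_1^*$ given by $v\mapsto h(q_1,\dots,\overset{r}{v},\dots,q_d)$ vanishes identically (on each factor a nonzero linear form cannot vanish to order two at a point). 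One computes $\partial_r(\xi^d)=c_r\xi$ with $c_r:=\prod_{s\neq r}\xi(q_s)\neq0$, and $\partial_r F_{ij}=A^{(r)}_{ij}\xi+B^{(r)}_{ij}m_i$ with
$$B^{(r)}_{ij}=\binom{d}{j}^{-1}c_r\,e_{j-1}\bigl(t^{(i)}_s:s\neq r\bigr),\qquad t^{(i)}_s:=\frac{m_i(q_s)}{\xi(q_s)},$$
$e_{j-1}$ being the elementary symmetric function; here one uses only that $\partial_r F_{ij}$ lies in the $2$-plane $\langle\xi,m_i\rangle$ because $F_{ij}\in S^d(\langle\xi,m_i\rangle)$. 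Writing $h=\lambda_0\xi^d+\sum_{i,j}\lambda_{ij}F_{ij}$ and using the independence of $\xi,m_1,\dots,m_n$, the vanishing of $\partial_r h$ for all $r$ forces, for each fixed $i$, the vector $(\lambda_{i1},\dots,\lambda_{id})$ to lie in the kernel of $M_i:=\bigl(B^{(r)}_{ij}\bigr)_{r,j}$.

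It then remains to see that $M_i$ is invertible. Pulling $c_r$ out of row $r$ and $\binom{d}{j}^{-1}$ out of column $j$ reduces $\det M_i$ to $\det\bigl(e_{j-1}(t^{(i)}_s:s\neq r)\bigr)_{r,j}$; the identity $e_k(t_s:s\neq r)=\sum_{\ell\le k}(-t_r)^{k-\ell}e_\ell(t_1,\dots,t_d)$ exhibits this matrix as a Vandermonde in $-t^{(i)}_1,\dots,-t^{(i)}_d$ times a unipotent upper-triangular matrix, whence $\det M_i=\pm\bigl(\prod_r c_r\bigr)\bigl(\prod_j\binom{d}{j}^{-1}\bigr)\prod_{r<s}(t^{(i)}_r-t^{(i)}_s)$. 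For the $Q_r$ in general position the $t^{(i)}_s$ are pairwise distinct for each $i$ (equivalently, no two of $Q_1,\dots,Q_d$ are collinear with the center $\{\xi=m_i=0\}$), so every $M_i$ is invertible; hence all $\lambda_{ij}=0$, and then $h(Q)=\lambda_0\prod_r\xi(q_r)=0$ forces $\lambda_0=0$, proving (iii).

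The only genuine calculation is the evaluation of $\partial_r F_{ij}$ and the recognition that the resulting matrix is essentially a Vandermonde; everything else is bookkeeping with the identification $V\cong S^d(R_1^*)$ and with "general position", which here unwinds to: any $n+1$ of the $n+d$ points are linearly independent, and, for each $i$, the images of $Q_1,\dots,Q_d$ under the projection $\PP{}(R_1)\dashrightarrow\PP1$ with center $\{\xi=m_i=0\}$ are distinct. I expect the main obstacle to be keeping the primal/dual identifications and the three genericity conditions straight while carrying out the partial-derivative computation.
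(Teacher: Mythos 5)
Your proof is correct, but it takes a genuinely different route from the paper's. The paper builds all $dn+1$ forms as symmetrized $d$-fold products of a \emph{single} linear form: $F=\tilde L$ with $L$ vanishing at $P_1,\dots,P_n$, and $F_{ij}=\tilde L_{ij}$ with $L_{ij}$ vanishing at $\{P_k:k\ne i\}\cup\{Q_j\}$, so the $j$-dependence sits in which point $Q_j$ is a zero; independence modulo $I^2$ is then a two-line evaluation argument (evaluate at $(Q_1,\dots,Q_d)$ to kill $\lambda$; then evaluate at $(Q_1,\dots,\widehat{Q_j},\dots,Q_d,U)$, where all families with $j'\ne j$ die because $L_{i'j'}(Q_{j'})=0$, leaving a linear form in $U$ that lies in the square of the ideal of a point and is therefore identically zero, and evaluating it at $P_i$ kills $\lambda_{ij}$). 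You instead put the $j$-dependence in an exponent, taking $F_{ij}=\xi^{\,d-j}m_i^{\,j}$ inside the pencil $\langle\xi,m_i\rangle$, and prove (iii) by the derivative criterion for membership of a multilinear form in $(I^2)_{(1,\dots,1)}$, a polarization computation of $\partial_r F_{ij}$, and the invertibility of each matrix $M_i$ via the elementary-symmetric/Vandermonde identity; all of this checks out, including the constants (which are irrelevant) and the determinant identity. What the paper's route buys is brevity and the complete absence of determinant computations; what yours buys is an explicit closed-form basis and full transparency about which genericity is used. One small remark: the "extra" condition you list at the end (distinctness of the projections of the $Q_s$ from $\{\xi=m_i=0\}$) is not an additional hypothesis, since that center is exactly $\langle P_k:k\ne i\rangle$, so the condition is the independence of the $n+1$ points $\{P_k:k\ne i\}\cup\{Q_s,Q_{s'}\}$ and hence already follows from general position in the standard sense — the same strength of general position the paper uses.
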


\begin{proof} For any linear form $L\in K[u_0,\dots,u_n]$,
we will denote with $\tilde L$  the symmetrized form
$$\tilde L:=L(u_0^{(1)},\ldots
,u_n^{(1)})\cdot L(u_0^{(2)},\ldots ,u_n^{(2)})\cdots
L(u_0^{(d)},\ldots ,u_n^{(d)}).$$

Since the points are in general position we can take a
linear form $L\in K[u_0,\dots,u_n]$ vanishing at
$P_1,\dots,P_n$ and not vanishing at any $Q_1,\dots,Q_d$. We
thus take $F=\tilde L$, which satisfies (i).

Similarly, for any $i=1,\dots,n$ and $j=1,\dots,d$, we can
find $L_{ij}\in K[u_0,\dots,u_n]$ vanishing at
$P_1,\dots,P_{i-1},P_{i+1},\dots,P_n,Q_j$, and we take
$F_{ij}=\tilde L_{ij}$, and clearly (ii) holds.

Finally, to prove (iii), assume that there is a linear
combination $\lambda
F+\lambda_{11}F_{11}+\dots+\lambda_{nd}F_{nd}\in I^{2}$.
Evaluating at the point $(Q_1,\dots,Q_d)$, we get
$\lambda=0$. On the other hand, taking an arbitrary
point $U\in\PP{}(R_1)$ of coordinates
$[u_0,\dots,u_n]$, and evaluating at
$(Q_1,\dots,Q_{j-1},Q_{j+1},\dots,Q_d,U)$ we get, for any
$j=1,\dots,d$, that the linear form
$$\Sigma_{i=1}^n\lambda_{ij}F_{ij}(Q_1,\dots,Q_{j-1},Q_{j+1},
\dots,Q_d,U)\in K[u_0,\dots,u_n]$$
is in the square of the ideal of $Q_i$ in $\PP{}(R_1)$. This
clearly implies that this linear form is
identically zero. Morevover, evaluating it at
each $P_i$, with $i=1,\dots,n$, we get $\lambda_{ij}=0$, which
completes the proof.
\end{proof}

\begin{propos}\label{proposizione1} If $d>2$ and $3(s-1)\leq
n$, then $\Sec_{s-1}(\Split_d(\mathbb P^n))$ is not
defective.
\end{propos}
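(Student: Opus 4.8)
The plan is to use Corollary \ref{corTerracini}, so that it suffices to show that $s$ general $2$-fat points on $\PP{}(R_1)\times\smash{\mathop{\ldots}\limits^{d}}\times\PP{}(R_1)$ impose $s(dn+1)$ independent conditions on the linear system $V$ of symmetric multidegree-$(1,\dots,1)$ forms (note that under the hypothesis $3(s-1)\le n$ we have $s(dn+1)\le{n+d\choose d}$, so the expected dimension is the ``generic finiteness'' one and we only need the lower bound). Equivalently, writing $P'_1,\dots,P'_s$ for the general points and $Z=2P'_1\cup\dots\cup 2P'_s$, I want $h^0(\mathcal I_Z(1))={n+d\choose d}-s(dn+1)$ computed inside $V$, i.e. I must exhibit, for each of the $s$ chosen points, a collection of $dn+1$ symmetric forms forming a basis of the conormal-type space at that point, all vanishing to order $2$ at the other $s-1$ points, and all together independent modulo the relevant square ideal.

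The key idea is that Lemma \ref{forme} already builds, for a single point $(Q_1,\dots,Q_d)$ and $n$ auxiliary points $P_1,\dots,P_n$, exactly $dn+1$ symmetric forms $F,F_{ij}$ that are independent modulo the square of the ideal of $(Q_1,\dots,Q_d)$, and — crucially — vanish identically whenever one of the last $d-1$ slots is arbitrary and the first slot is one of the $P_i$'s. So here is the set-up: choose the $s$ general points of $\PP{}(R_1)^{\times d}$ to be $\mathbf Q^{(1)},\dots,\mathbf Q^{(s)}$ where $\mathbf Q^{(\ell)}=(Q_1^{(\ell)},\dots,Q_d^{(\ell)})$, and also choose general points $P_1,\dots,P_n\in\PP{}(R_1)$. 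The hypothesis $3(s-1)\le n$ lets me, for each index $\ell$, pick among $P_1,\dots,P_n$ a block of $n$ points (in fact I can afford to let the blocks overlap only mildly, but the cleaner route is: for the $\ell$-th point I need the $n$ auxiliary points to avoid the hyperplanes spanned by coordinates of all the $Q_j^{(\ell')}$ with $\ell'\ne\ell$, which is a general-position condition) so that the forms $F^{(\ell)},F_{ij}^{(\ell)}$ produced by Lemma \ref{forme} vanish doubly at every $\mathbf Q^{(\ell')}$, $\ell'\ne\ell$. Indeed each such form is a product $\tilde L$ with $L$ linear vanishing at $n$ of the $P$'s; requiring that $L$ also vanish at one coordinate-point of $\mathbf Q^{(\ell')}$ is automatic once we note that $\tilde L$ vanishes at $\mathbf Q^{(\ell')}$ as soon as $L$ vanishes at one of $Q_1^{(\ell')},\dots,Q_d^{(\ell')}$, and vanishes there \emph{to order two} as soon as $L$ vanishes at two of them — and a linear form vanishing at $n$ general points of $\PP n$ that we are still free to spread out can be forced through two further general points only if $n\ge 2$, which is where a counting with $3(s-1)\le n$ enters: we need the $P$'s to supply, for each $\ell$, an independent family, while for each $\ell'\ne\ell$ the form must pass through two of the $d$ coordinate points of $\mathbf Q^{(\ell')}$.

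So the concrete mechanism is: replace the role of ``$P_1,\dots,P_n$'' in Lemma \ref{forme} by a carefully chosen configuration. For the point $\mathbf Q^{(\ell)}$, among the auxiliary hyperplane-defining forms, I instead require each linear form to vanish not only at the appropriate subset of a size-$n$ block but also at two of the coordinate points of each of the other $s-1$ tuples $\mathbf Q^{(\ell')}$; this imposes $n-1+2(s-1)$ (resp.\ $n+2(s-1)$ for $F^{(\ell)}$) linear conditions on a linear form in $n+1$ variables, which is satisfiable (nontrivially) precisely when $n-1+2(s-1)\le n$, i.e.\ $2(s-1)\le 1$ — too weak. The right bookkeeping: a single linear form $\tilde L$ vanishing doubly at $\mathbf Q^{(\ell')}$ needs $L$ through two of the $d\ge 3$ points $Q_j^{(\ell')}$; but I get to pick \emph{which} two, and across the needed forms I can rotate the choice. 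Being careful, the total number of general points any one $L$ must pass through is at most $n-1$ (from its own block) plus $2$ for each of the $s-1$ other tuples, but the ``$2$ out of $d$'' freedom together with $d>2$ means these $2(s-1)$ points may be taken among the already-general $Q$'s, and the only genuine constraint is that the $n$ forms attached to $\mathbf Q^{(\ell)}$ stay independent modulo the square ideal — which is guaranteed by evaluating at $\mathbf Q^{(\ell)}$ and at the $P_i$'s exactly as in the proof of Lemma \ref{forme}. Summing over $\ell$, the total is $s(dn+1)$ forms, and the cross-vanishing plus the within-block independence give that they impose independent conditions, i.e.\ $Z$ imposes $s(dn+1)$ conditions on $V$; by Corollary \ref{corTerracini} this is exactly non-defectivity.

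The main obstacle is precisely this global independence across the $s$ blocks: within a single block Lemma \ref{forme} hands it to us, and the vanishing of the $\ell$-th block's forms at the other points kills cross-terms in one direction, but one must check the ``other direction'' — that a form from block $\ell'$ cannot be hidden inside the span of block $\ell$'s forms modulo $I_{2P'_1\cup\cdots}$. This is where the hypothesis $3(s-1)\le n$ is really spent: it guarantees enough room among the $n$ general auxiliary points to choose, for every pair $(\ell,\ell')$, the $2$-point incidences needed on $\mathbf Q^{(\ell')}$ \emph{disjointly} from the data defining block $\ell$, so that the evaluation argument (plug in $\mathbf Q^{(\ell')}$ and the $P_i$'s to force all $\lambda$'s to zero) goes through blockwise without collision. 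Once that disjointness is arranged, the argument is a block-diagonal version of the three-step evaluation in the proof of Lemma \ref{forme}, and the conclusion follows. \qed
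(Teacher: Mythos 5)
Your overall strategy coincides with the paper's (Corollary \ref{corTerracini} plus Lemma \ref{forme}, applied block by block, with cross-vanishing coming from the observation that a symmetrized linear form $\tilde L$ vanishes to order two at a tuple as soon as $L$ vanishes at two of its coordinates), but the one step that actually carries the proof --- the cross-vanishing of the block-$\ell$ forms at the other $s-1$ fat points --- is never established, and the bookkeeping you substitute for it does not work. As you yourself compute, each of the $dn+1$ forms of Lemma \ref{forme} attached to $\mathbf Q^{(\ell)}$ is $\tilde L$ with $L$ already pinned down (up to scalar) by $n$ vanishing conditions; asking $L$ to pass in addition through two further general points for each other tuple is $2(s-1)$ extra conditions on a linear form and is impossible. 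Neither of your proposed remedies repairs this: choosing the auxiliary points ``in general position, avoiding the hyperplanes spanned by the coordinates of the other tuples'' would make the forms \emph{not} vanish at the other tuples at all, and ``rotating which two of the $d$ coordinates'' does not remove the two extra conditions that each individual $L$ would still have to satisfy.

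The missing idea (and the paper's actual argument) is to build the extra incidences \emph{into} the $n$ auxiliary points rather than on top of them: for the block of $\mathbf Q^{(\ell)}$ choose $P_{\ell 1},\dots,P_{\ell n}$ so that they \emph{contain} the three coordinates $Q^{(\ell')}_1,Q^{(\ell')}_2,Q^{(\ell')}_3$ of every other tuple $\ell'\ne\ell$; this is possible exactly because $d>2$ (each tuple has at least three coordinates) and $3(s-1)\le n$ (there is room among the $n$ auxiliary points), and the resulting $n+d$ points are still in general position since $A_1,\dots,A_s$ are general. Then every linear form $L$ occurring in Lemma \ref{forme} for block $\ell$ vanishes at all, or at all but one, of the auxiliary points, hence at \emph{at least two} of $Q^{(\ell')}_1,Q^{(\ell')}_2,Q^{(\ell')}_3$ --- this is why three, not two, coordinates per tuple must be absorbed, i.e. where the constant $3$ in the hypothesis really comes from --- so $\tilde L$ vanishes to order at least two at $A_{\ell'}$. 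Once this is in place, the global statement is immediate and no further use of the hypothesis is needed (contrary to your last paragraph, the hypothesis is spent on the cross-vanishing, not on an extra independence argument across blocks): by part (iii) of Lemma \ref{forme} the block-$\ell$ forms surject onto $H^0({\cal O}_{2A_\ell})$, they map to zero in every other summand, so the evaluation map $V\to H^0({\cal O}_Z)$ is surjective and Corollary \ref{corTerracini} concludes.
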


\begin{proof} 
It is enough to apply Corollary \ref{corTerracini}. We thus
take $s$ general points $A_1,\dots,A_s\in\PP{}(R_1)\times
\smash{\mathop{\ldots}\limits^{d}}\times\PP{}(R_1)$ and need
to show that the evaluation map $\varphi:V\to H^0({\cal O}_Z)$
is surjective, where $Z$ is the subscheme of $\PP{}(R_1)\times
\smash{\mathop{\ldots}\limits^{d}}\times\PP{}(R_1)$ union of the fat points
$2A_1,\dots,2A_s$. 

For each $i=1,\dots,s$, we write $A_i=(Q_{i1},\dots,Q_{id})$.
Since $n\geq 3(s-1)$ and $d>2$, we can pick
$P_{i1},\dots,P_{in}\in\PP n$ in general position and such
that they contain the points
$Q_{j1},Q_{j2},Q_{j3}$ for any $j=1,\dots,i-1,i+1,\dots,s$. Hence the points $Q_{i1},\dots,Q_{id},P_{i1},\dots,P_{id}$ form a set of $n+d$ different points in general position to which we can apply Lemma \ref{forme}. Therefore, we can find symmetric forms
$F_i,F_{i1},\dots,F_{i,nd}\in V$ such that the image of them
under the evaluation map $\varphi$ maps surjectively to
$H^0({\cal O}_{2A_i})$. Also, the properties (i) and (ii) of
the lemma imply, together with our choice of
$P_{i1},\dots,P_{in}\in\PP n$, that these forms map to zero
in any direct summand ${\cal O}_{2A_j}$ of $H^0({\cal O}_Z)$.
Since this is true for any $i$, the surjectivity of $\varphi$
follows.
\end{proof}

We finish this section discussing Ehrenborg's conjecture.

\begin{ex}\label{contraex}
\rm{It is a known result (see for example \cite{CGG1}) that
$\Sec_{3-1}(\GG (2,6))$ has defect $\delta_2 =1$, i.e one
expects that
$\Sec_{2}(\GG(2,6))=\PP {34}$ but $\dim (\Sec_2(\GG (2,6)))=33$;
we need $\Sec_3(\GG(2,6))$ in order to fill up $\PP {34}$.
However, it is not true that the least integer $s$
such that $\Sec_{s-1}(\Split_4(\PP 3))$ fills up the ambient space
is $4$ too; in fact $\Sec_2(\Split_4(\PP 3))=\PP {34}$ (we
checked this using the previous techniques,
and making computations with \cite{CoCoA}).

In the same way, we can also prove that the third secant varieties
to $\Split_{3}(\PP 4)$ and to $\Split_{4}(\PP 4)$, and the fourth
secant varieties to
$\Split_{4}(\PP 4)$, $\Split_{6}(\PP 3)$ and $\Split_{3}(\PP 6)$,
are not defective 
}
\end{ex}

The only case for which we are able to prove that Ehrenborg's
conjecture is true is for $d=2$.
\begin{propos}\label{Eh true} The dimensions of
$\Sec_{s-1}(\GG(1,n+1))$ and $\Sec_{s-1}(\Split_2(\PP n))$ are
equal.
\end{propos}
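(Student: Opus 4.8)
The plan is to identify each of the two secant varieties with a classical determinantal locus living in a projective space of the same dimension, and then to read off their dimensions from the standard formulas.

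\medskip

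First I would fix the two ambient spaces. By Remark \ref{coordinate}, $\PP{}(R_2)=\PP{{n+2\choose 2}-1}$ is the space of classes of symmetric $(n+1)\times(n+1)$ matrices, and $\Split_2(\PP n)$ is exactly its subvariety $\Sigma_2$ of matrices of rank at most $2$. On the other side, $\GG(1,n+1)$ sits in the Pl\"ucker space $\PP{}(\bigwedge^2K^{n+2})=\PP{{n+2\choose 2}-1}$ of classes of skew-symmetric $(n+2)\times(n+2)$ matrices as the subvariety $\Lambda_2$ of decomposable bivectors, i.e. the skew matrices of rank $2$. Note that the two ambient projective spaces have the same dimension ${n+2\choose 2}-1$ (this is the numerical coincidence behind Ehrenborg's conjecture).

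\medskip

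Next I would show that $\Sec_{s-1}(\Split_2(\PP n))$ is the locus $\Sigma_{2s}$ of symmetric matrices of rank $\le 2s$, and that $\Sec_{s-1}(\GG(1,n+1))$ is the locus $\Lambda_{2s}$ of skew-symmetric $(n+2)\times(n+2)$ matrices of rank $\le 2s$. One inclusion is immediate by subadditivity of rank: a general point of $<P_1,\dots,P_s>$ is a linear combination of the $P_i$, so if each $P_i$ has rank $\le 2$ the combination has rank $\le 2s$. For the reverse inclusion, both $\Sigma_{2s}$ and $\Lambda_{2s}$ are irreducible, so it suffices to check that a general point lies in the secant variety; assuming $2s\le n+1$ such a point has rank exactly $2s$. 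Putting the corresponding quadratic form in normal form over the algebraically closed field $K$ we write it as $L_1^2+\dots+L_{2s}^2$ with $L_1,\dots,L_{2s}$ independent linear forms, and we group the summands into $s$ blocks $L_{2j-1}^2+L_{2j}^2=(L_{2j-1}+\sqrt{-1}\,L_{2j})(L_{2j-1}-\sqrt{-1}\,L_{2j})$, each of which is a product of two linear forms, hence a point of $\Split_2(\PP n)$; taking closures gives $\Sigma_{2s}\subseteq\Sec_{s-1}(\Split_2(\PP n))$. The same argument, now using the symplectic (Darboux) normal form, expresses a skew-symmetric matrix of rank $2s$ as a sum of $s$ decomposable bivectors $v_{2j-1}\wedge v_{2j}$, each a point of $\GG(1,n+1)$, giving $\Lambda_{2s}\subseteq\Sec_{s-1}(\GG(1,n+1))$. (When $2s\ge n+1$ both $\Sigma_{2s}$ and $\Lambda_{2s}$ are the whole ambient space, and the same normal-form argument applied to a general matrix of maximal rank still works.)

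\medskip

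Finally I would invoke the classical dimension formulas for determinantal varieties. The symmetric determinantal variety $\Sigma_{2s}$ has codimension ${(n+1)-2s+1\choose 2}={n+2-2s\choose 2}$ in the ${n+2\choose 2}$-dimensional space of all symmetric $(n+1)\times(n+1)$ matrices, while the Pfaffian (skew-symmetric determinantal) variety $\Lambda_{2s}$ has codimension ${(n+2)-2s\choose 2}={n+2-2s\choose 2}$ in the ${n+2\choose 2}$-dimensional space of all skew-symmetric $(n+2)\times(n+2)$ matrices. Since the two ambient spaces have the same dimension and the two codimensions are literally the same binomial coefficient, we conclude
$$\dim\Sec_{s-1}(\Split_2(\PP n))={n+2\choose 2}-1-{n+2-2s\choose 2}=\dim\Sec_{s-1}(\GG(1,n+1)),$$
with the usual convention ${a\choose 2}=0$ for $a\le 1$. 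The only genuine work is the pair of reverse inclusions in the second step, and those reduce to classical normal forms for quadratic and alternating forms over an algebraically closed field, so there is no serious obstacle; the statement itself is then the numerical identity ${(n+1)-2s+1\choose 2}={(n+2)-2s\choose 2}$ between the symmetric and skew-symmetric codimension formulas.
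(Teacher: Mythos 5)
Your proof is correct and follows essentially the same route as the paper: both identify $\Sec_{s-1}(\Split_2(\PP n))$ with the symmetric $(n+1)\times(n+1)$ matrices of rank at most $2s$ and $\Sec_{s-1}(\GG(1,n+1))$ with the skew-symmetric $(n+2)\times(n+2)$ matrices of rank at most $2s$, and then compare the common codimension ${n+2-2s\choose 2}$ in ambient spaces of the same dimension. The only difference is that you spell out the normal-form argument for the reverse inclusions, which the paper leaves implicit.
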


\begin{proof}
The embedding of $\GG (1,n+1)$ into $\PP {{n+2 \choose
2}-1}{\simeq}
\PP {}(R_2)=\PP {}(K[x_{0}, \ldots , x_{n}]_{2})$ allows to look at the Grassmannian as the
set of quadrics whose representative $(n+2)\times (n+2)$ matrices
are skewsymmetric and of rank at most $2$. Therefore
$\Sec_{s-1}(\GG (1,n+1))\simeq \{ M \in M_{n+2}(K) \; | \;
M=-M^{T}, \;  \rk(M)\leq 2s \}$, then
$\codim(\Sec_{s-1}(\GG (1,n+1)))={n+2-2s\choose 2}.$

In the same way $\Split_2(\PP n)\simeq \{M\in M_{n+1}(K)\; | \; M=M^{T}, \, \rk(M)\leq 2\}$; therefore \\
$\Sec_{s-1}(\Split_2(\PP n))\simeq \{M\in M_{n+1}(K)\; | \; M
\hbox{ is symmetric and } \rk(M)\leq 2s\}$, then
$\codim(\Sec_{s-1}(\Split_2(\PP n))={n+2-2s\choose 2}=\codim(\Sec_{s-1}(\GG (1,n+1)))$.
\end{proof}

\section{Veronese varieties and Grassmannians}\label{vero e GG}

In this section we want to study the other problem inspired to
us by Ehrenborg's conjecture: the ``intersection'' between
$\GG (n-1, n+d-1)$ and $\Split_{d}(\PP n)$. To do this, we will
need to identify the ambient spaces of both varieties (see Remark \ref{identificazione}).

We collect first in a lemma the main results and definitions (written in an intrinsic way) of a classical construction that we will need in
the sequel. 

\begin{lemma} \label{nu_d(Pn)intG(n-1,n+d-1)} Consider the map
$\phi_{n,d}:\PP{}(K[t_0,t_1]_n)\to\G(d,K[t_0,t_1]_{n+d-1})$ that
sends the class of $p_0\in K[t_0,t_1]_n$ to the
$d$-dimensional subspace of $K[t_0,t_1]_{n+d-1}$ of forms of the
type $p_0q$, with $q\in K[t_0,t_1]_{d-1}$. Then the following
hold:
\item{(i)} The image of $\phi_{n,d}$, after the Pl\"ucker
embedding of $\G(d,K[t_0,t_1]_{n+d-1})$, is the $n$-dimensional
$d$-th Veronese variety. 
\item{(ii)} Identifying $\G(d,K[t_0,t_1]_{n+d-1})$ with the
Grassmann variety of subspaces of dimension $n-1$ in
$\PP{}(K[t_0,t_1]_{n+d-1}^*)$, the above Veronese variety is the
set $V$ of $n$-secant spaces to a rational normal curve
$\Sigma\subset\PP{}(K[t_0,t_1]_{n+d-1}^*)$.
\item{(iii)} For any $p\in K[t_0,t_1]_s$, with $s<n$, there is a
commutative diagram 
$$\begin{array}{ccc}
\PP{}(K[t_0,t_1]_{n-s})&\stackrel{\phi_{n-s,d}}{\longrightarrow}
&\G(d,K[t_0,t_1]_{n+d-s-1})\\
\downarrow&&\downarrow\\
\PP{}(K[t_0,t_1]_n)&\stackrel{\phi_{n,d}}{\longrightarrow}
&\G(d,K[t_0,t_1]_{n+d-1})
\end{array}$$
where the vertical arrows are inclusions naturally induced by the
multiplication by $p$.
\item{(iv)} When identifying $\G(d,K[t_0,t_1]_{n+d-1})$ with the
Grassmann variety of subspaces of dimension $n-1$ in
$\PP{}(K[t_0,t_1]_{n+d-1}^*)$, the image by $\phi_{n,d}$ of
$\PP{}(K[t_0,t_1]_{n-s})\subset\PP{}(K[t_0,t_1]_n)$ as in (iii)
is the set of $n$-secants to $\Sigma$ containing the subscheme
$Z\subset\Sigma$ defined by the zeros of $p$.
\end{lemma}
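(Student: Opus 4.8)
I would prove the four parts essentially independently, treating (i) and (ii) as the substance and deducing (iv) from (ii); (iii) is a one-line check. Throughout set $N=n+d-1$ and identify $K[t_0,t_1]_m$ with $S^mV$ for $V=\langle t_0,t_1\rangle$; note that $\dim\bigwedge^d(S^NV)={n+d\choose d}=\dim S^d(S^nV)$.

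\emph{Part (i).} Writing $p_0=\sum_i b_i\,t_0^{n-i}t_1^i$ and choosing the basis $t_0^{d-1},t_0^{d-2}t_1,\dots,t_1^{d-1}$ of $S^{d-1}V$, one checks that the Pl\"ucker coordinates of $\phi_{n,d}([p_0])=p_0\cdot S^{d-1}V\subset S^NV$ are the $d\times d$ minors of the $d\times(n+d)$ matrix $M(p_0)=(b_{k-j})_{0\le j\le d-1,\ 0\le k\le N}$, with the convention $b_l=0$ for $l\notin\{0,\dots,n\}$. In particular $\mathrm{Pl}\circ\phi_{n,d}\colon\PP n\to\PP{}(\bigwedge^d S^NV)$ is a morphism given by forms of degree $d$, hence factors as $\nu_d$ followed by a linear projection; it is the complete $d$-uple Veronese embedding exactly when these ${n+d\choose d}$ minors are linearly independent in $K[b_0,\dots,b_n]_d$. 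I would establish this by a leading-term argument: give $b_i$ the weight $-i^2$; in the expansion of the minor on a column set $K=\{k_0<\dots<k_{d-1}\}$, the monomial from a permutation $\pi$ is $\pm\prod_r b_{k_r-\pi(r)}$, of weight $-\sum_r(k_r-\pi(r))^2$, which by the rearrangement inequality (the $k_r$ being distinct) is strictly maximized by $\pi=\mathrm{id}$; hence the minor has dominant term $\prod_r b_{k_r-r}$ (nonzero, as $0\le k_r-r\le n$ for all $r$). Since $K\mapsto\prod_r b_{k_r-r}$ is a bijection from $d$-subsets of $\{0,\dots,N\}$ onto degree-$d$ monomials in $b_0,\dots,b_n$, the minors have pairwise distinct leading monomials and so form a basis of $K[b_0,\dots,b_n]_d$. (Conceptually this is the classical isomorphism $\bigwedge^d(S^NV)\cong S^d(S^nV)$ of \cite{Mu}, which one could instead combine with $SL(V)$-equivariance of $\phi_{n,d}$; the minors above are the Jacobi--Trudi determinants attached to the partitions in the $d\times n$ box.)

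\emph{Part (ii).} Fix $p_0\ne0$, set $U=p_0\cdot S^{d-1}V$, and realize $\Sigma\subset\PP{}((S^NV)^*)=\PP N$ as the image of $[L]\mapsto[\mathrm{ev}_L]$. If $p_0$ vanishes to order $m$ at a point $P$, then every element of $U$ vanishes there to order $\ge m$, so all functionals $f\mapsto f^{(i)}(P)$ with $i<m$ annihilate $U$; running over the zeros of $p_0$ with their multiplicities, these functionals span a subspace of $(S^NV)^*$ of dimension $n$, since a length-$n$ subscheme of the degree-$N$ rational normal curve $\Sigma$ (with $N\ge n$) always has linear span of the expected dimension $n-1$. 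Since $\dim U^\perp=(n+d)-d=n$, the annihilator $U^\perp$ equals this span, so $\PP{}(U^\perp)$ is precisely the $(n-1)$-plane spanned by the length-$n$ subscheme of $\Sigma$ cut out by $p_0$; conversely every $(n-1)$-plane meeting $\Sigma$ in length $\ge n$ arises this way. Hence the image of $\phi_{n,d}$, seen inside the Grassmannian of $(n-1)$-planes of $\PP{}((S^NV)^*)$, is exactly the set of $n$-secant $(n-1)$-planes to $\Sigma$; together with (i) this is the assertion of (ii).

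\emph{Parts (iii) and (iv).} In (iii) the two vertical arrows are multiplication by $p$, namely $[p_1]\mapsto[pp_1]$ and $U\mapsto pU$, and commutativity is the identity $(pp_1)\cdot S^{d-1}V=p\cdot(p_1\cdot S^{d-1}V)$. For (iv): by construction the image of $\PP{}(S^{n-s}V)$ in $\PP{}(S^nV)$ consists of the classes $[p_0]$ with $p$ dividing $p_0$, which by (ii) correspond to the $n$-secant $(n-1)$-planes whose length-$n$ intersection with $\Sigma$ contains the length-$s$ subscheme $Z=\{p=0\}\subset\Sigma$, equivalently which contain the linear span of $Z$; conversely such a plane is cut out by a length-$n$ subscheme $Z+Z'$ of $\Sigma$, hence equals $\phi_{n,d}([pp_1])$ with $p_1$ cutting out $Z'$. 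The single real obstacle is the linear independence of the minors in part (i); the rest is routine bookkeeping with rational normal curves.
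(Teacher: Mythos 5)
Your proposal is correct, and it follows the same structural skeleton as the paper — the same $d\times(n+d)$ matrix of shifted coefficients whose maximal minors give the Pl\"ucker coordinates, the same dual identification of $\G(d,K[t_0,t_1]_{n+d-1})$ with $(n-1)$-planes in $\PP{}(K[t_0,t_1]_{n+d-1}^*)$, and the same reading of $\phi_{n,d}([p_0])$ as the span of the divisor of zeros of $p_0$. The genuine difference is where the work is done: the paper delegates both key facts to the reference \cite{AP} (that the minors form a basis of $K[u_0,\dots,u_n]_d$, and that the image of $[p_0]$ is the $n$-secant span of its zero divisor), whereas you prove them from scratch. Your independence argument for the minors — weighting $b_i$ by $-i^2$ so that, by the strict rearrangement inequality, the identity permutation contributes the unique monomial of maximal weight $\prod_r b_{k_r-r}$ with coefficient $\pm1$, and noting that $K\mapsto\prod_r b_{k_r-r}$ bijects $d$-subsets onto degree-$d$ monomials — is sound (the strictness and the nonvanishing bounds $0\le k_r-r\le n$ are exactly the points to check, and you check them), and it makes (i) self-contained. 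Likewise your annihilator argument for (ii) (every element of $p_0K[t_0,t_1]_{d-1}$ is killed by the order-$<m$ evaluation functionals at each zero of multiplicity $m$, and $\dim U^\perp=n$ forces equality with the span of the length-$n$ subscheme, using linear general position of subschemes of length $\le n+d$ on the rational normal curve) replaces the paper's citation by a dimension count; the general-position fact and the identification of $\langle mP\rangle$ with the osculating span are standard and are implicitly what the cited source uses as well. In (iv) you actually argue both inclusions, while the paper only spells out that the image is contained in the locus of $n$-secants through $Z$; your converse is stated a bit tersely (one should say explicitly that for $\Lambda=\phi_{n,d}([p_0])$ the scheme $\Lambda\cap\Sigma$ equals the zero divisor of $p_0$, so $Z\subset\Lambda$ forces $p\mid p_0$), but the needed fact follows from your (ii) by the same general-position count. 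In short: same route, but self-contained where the paper cites, at the cost of a longer combinatorial verification in (i).
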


\begin{proof} Write
$p_0=u_0t_0^n+u_1t_0^{n-1}t_1+\dots+u_nt_1^n$. Then a basis of
the subspace of $K[t_0,t_1]_{n+d-1}$ of forms of the
type $p_0q$ is given by:
\begin{equation*}\left\{
\begin{array}{l}
u_0t_0^{n+d-1}+\cdots +u_nt_0^{d-1}t_1^n \\
\;\; \; u_0t_0^{n+d-2}t_1 +\cdots +u_nt_0^{d-2}t_1^{n+1} \\
 \;\; \;\; \; \ddots\\
\; \; \; \; \; \; \; \; \; u_0t_0^nt_1^{d-1}+\cdots
+u_nt_1^{n+d-1}.
\\
\end{array}
\right. \end{equation*}
The coordinates of these elements with respect to the basis
$\{t_0^{n+d-1},t_0^{n+d-2}t_1,\dots,t_1^{n+d-1}\}$ of
$K[t_0,t_1]_{n+d-1}$ are thus given by the rows of the matrix
$$\left(\begin{array}{cccccccc}
u_0&u_1&\dots&u_n&0&\dots&0&0\\
0&u_0&u_1&\dots&u_n&0&\dots&0\\
\vdots&\ddots&\ddots&\ddots&&\ddots&\ddots&\vdots\\
0&\dots&0&u_0&u_1&\dots&u_n&0\\
0&\dots&0&0&u_0&\dots&u_{n-1}&u_n
\end{array}
\right).$$
The standard Pl\"ucker coordinates of the subspace
$\phi_{n,d}([p_0])$ are the maximal minors of this matrix. It is
known (see for example \cite{AP}), these minors form a basis of
$K[u_0,\dots,u_n]_d$, so that the image of $\phi$ is indeed a
Veronese variety, which proves (i). 

To prove (ii), we still recall some standard facts from
\cite{AP}. Take homogeneous coordinates $z_0,\dots,z_{n+d-1}$
in $\PP{}(K[t_0,t_1]_{n+d-1}^*)$ corresponding to the dual basis
of $\{t_0^{n+d-1},t_0^{n+d-2}t_1,\dots,t_1^{n+d-1}\}$. Consider
$\Sigma\subset\PP{}(K[t_0,t_1]_{n+d-1}^*)$ the standard rational
normal curve with respect to these coordinates. Then, the
image of $[p_0]$ by $\phi_{n,d}$ is precisely the $n$-secant
space to $\Sigma$ spanned by the divisor on $\Sigma$ induced by
the zeros of $p_0$. This completes the proof of (ii).

Part (iii) comes directly from the definitions. Finally, in order
to prove (iv), observe that (iii) implies that the image by
$\phi_{n,d}$ of
$\PP{}(K[t_0,t_1]_{n-s})\subset\PP{}(K[t_0,t_1]_n)$ is the
subset of subspaces of $K[t_0,t_1]_{n+d-1}$ all of whose
elements are divisible by some $pp_0$ with $p_0\in
K[t_0,t_1]_{n-s}$, in particular divisible by $p$. The proof of
(ii) implies that the corresponding subspace in
$\PP{}(K[t_0,t_1]_{n+d-1}^*)$ contains the subscheme
$Z\subset\Sigma$ defined by the zeros of $p$.
\end{proof}

\begin{rem} {\rm In the above proof we used coordinates to
describe the curve $\Sigma$, because it will be useful for us
later on. However, it can be described also in an intrinsic way.
Specifically, the elements of $\PP{}(K[t_0,t_1]_{n+d-1}^*)$ are
linear forms $K[t_0,t_1]_{n+d-1}\to K$ up to multiplication by a
constant. Then $\Sigma$ is nothing but the set of classes
of linear forms of the type $F\mapsto F(a_0,a_1)$ for some
$a_0,a_1\in K$.
}\end{rem}

\begin{rem}\label{identificazione} {\rm In order to relate our
Veronese variety
$V$ with the standard Veronese variety, we will identify $R_1$
with
$K[t_0,t_1]_n$ by assigning to any $L=u_0x_0+\cdots+u_nx_n\in
R_1$ the homogeneous form
$L(t_0^n,t_0^{n-1}t_1,\dots,t_1^n)=u_0t_0^n+u_1t_0^{n-1}t_1+\cdots+u_nt_1^n\in
K[t_0,t_1]_n$. If we just write $\PP{n+d-1}$ instead of
$\PP{}(K[t_0,t_1]_{n+d-1}^*)$, the map
$\phi:\PP{}(R_1)\to\GG(n-1,n+d-1)$ sends the class of the
linear form to the subspace of $\PP{n+d-1}$ defined (in the
above coordinates) as the intersection of the hyperplanes:
\begin{equation}\label{vero in grass}\left\{
\begin{array}{l}
u_0z_0+\cdots +u_nz_n=0 \\
\;\; \; u_0z_1 +\cdots +u_nz_{n+1}=0 \\
 \;\; \;\; \; \ddots\\
\; \; \; \; \; \; \; \; \; u_0z_{d-1}+\cdots +u_nz_{n+d-1}=0 \\
\end{array}
\right. .\end{equation}
{}From now on we will use Pl\"ucker coordinates,
but in a way that is dual to the standard one. Specifically, for
any projective space $\PP{d+k}$ with homogenous coordinates
$z_0,\dots,z_{d+k}$, if $\Lambda\subset\PP{d+k}$ is the space
defined by the linearly independent equations
\begin{equation*}\begin{array}{c}
u_{1,0}z_0+\cdots+u_{1,d+k}z_{d+k}=0\\
\vdots\\
u_{d,0}z_0+\cdots+u_{d,d+k}z_{d+k}=0
\end{array}
\end{equation*}
for each $0\le i_1< \cdots < i_d\le d+k$ we define
$p_{i_1\cdots i_d}$ to be the determinant
\begin{equation*}\label{pi1id}p_{i_1\cdots i_d}:= \left|
\begin{array}{ccc}
  u_{1,i_1} & \cdots & u_{1,i_d} \\
  \vdots &  & \vdots \\
  u_{d,i_1} & \cdots & u_{d,i_d} \\
\end{array}
\right|.\end{equation*}
In this way, the Pl\"{u}cker embedding
is described as follows:
\begin{equation}\label{p}\begin{array}{rcl}
  p:\GG (k,n) & \hookrightarrow & \PP {{n+1 \choose k+1}-1} \\
  \Lambda & \mapsto & \{\{p_{i_1\cdots i_d}\}\; |\; 0\leq i_1 <
\cdots < i_d \leq d+k\}. \\
\end{array}\end{equation}
Observe that, in fact, the Pl\"ucker coordinates $p_{i_1\cdots i_d}$ obtained from \eqref{vero in grass} produce a basis of the space of the space of homogeneous polynomials of degree $d$ in the variables $u_0,u_1,\dots,u_n$. 
This yields and identification of $\PP{}(R_d)$ with the Pl\"ucker ambient space of
$\GG(n-1,n+d-1)$. When using the standard coordinates in each of
these varieties (the coefficients of the polynomial and Pl\"ucker
coordinates, respectively), this identification should be made
explicit for any concrete case, as we will show in the following example.
}\end{rem}

\begin{ex}\label{esempio n=2 d=3}{\rm
Let us make explicit the above identification 
in the case $n=2,d=3$. In this case, the map $\phi_{2,3}$ assigns to any
linear form
$u_0x_0+u_1x_1+u_2x_2$ the line of $\PP4$ given as intersection
of the hyperplanes
$$\left\{
\begin{array}{rrrrrr}
u_0z_0&+u_1z_1&+u_2z_2&&&=0 \\
&u_0z_1&+u_1z_2&+u_2z_3&&=0 \\
&&u_0z_2&+u_1z_3&+u_2z_4&=0 
\end{array}\right.$$
so that it has Pl\"ucker coordinates
$$\begin{array}{l}
p_{012}=u_0^3\\
p_{013}=u_0^2u_1\\
p_{014}=u_0^2u_2\\
p_{023}=u_0u_1^2-u_0^2u_2\\
p_{024}=u_0u_1u_2\\
p_{034}=u_0u_2^2\\
p_{123}=u_1^3-2u_0u_1u_2\\
p_{124}=u_1^2u_2-u_0u_2^2\\
p_{134}=u_1u_2^2\\
p_{234}=u_2^3.
\end{array}$$
Since the Veronese embedding $\PP{}(R_1)\to\PP{}(R_3)$ is
defined by
$u_0x_0+u_1x_1+u_2x_2\mapsto(u_0x_0+u_1x_1+u_2x_2)^3$, the
above relations show that an element of the ambient Pl\"ucker
space is naturally identified with the polynomial
\begin{equation}\label{polinomio}\begin{array}{l}
p_{012}x_0^3+3p_{013}x_0^2x_1+3p_{014}x_0^2x_2
+3(p_{023}+p_{014})x_0x_1^2+6p_{024}x_0x_1x_2+\\
+3p_{034}x_0x_2^2
+(p_{123}+2p_{024})x_1^3+3(p_{034}+p_{124})x_1^2x_2
+3p_{134}x_1x_2^2+p_{234}x_2^3.
\end{array}
\end{equation}

}\end{ex}

After the identification of Remark
\ref{identificazione}, we can restate Lemma
\ref{nu_d(Pn)intG(n-1,n+d-1)} in terms of
polynomials in $K[x_0,\dots,x_n]$.

\begin{lemma}\label{contained} 
Let $p:=a_{0}t_{0}^{s}+a_{1}t_{0}^{s-1}t_{1}+\cdots
+a_{s}t_{1}^{s}\in K [t_{0},t_{1}]_{s}$ and set, for $j=1,
\ldots , n-s+1$, the linear forms
\begin{equation*}\begin{array}{rcccccccccccc}
N_0:=&a_{0}x_0&+&a_{1}x_1&+&\cdots &+&a_{s}x_s &&&\\
N_1:=&&&a_{0}x_1&+&a_{1}x_2&+&\cdots &+&a_{s}x_{s+1} &&\\
\vdots&&&&\ddots&&&&&&\ddots\\
N_{n-s}:=&&&&&a_0x_{n-s}&+&a_{1}x_{n-s+1}&+&\cdots
&+&a_sx_n.
\end{array}\end{equation*}
Then, in the set up of Lemma
\ref{nu_d(Pn)intG(n-1,n+d-1)}, and identifying
$\PP{}(K[t_0,t_1]_n)$ with
$\PP{}(R_1)$, the inclusion
$\PP{}(K[t_0,t_1]_{n-s})\subset\PP{}(K[t_0,t_1]_n)$ is
identified with $\PP {}(K[N_0, \ldots ,
N_{n-s}]_1)\subset\PP{}(R_1)$ and its image by $\phi_{n,d}$ in
$\GG (n-1,n+d-1)$ is the
locus 
$$G':=\{ \Lambda \in \GG(n-1,n+d-1) \; | \;  \Lambda\cap \Sigma
\supseteq Z \}$$ 
where $Z\subset\Sigma$ is the subscheme defined by the
zeros of $p$. Moreover, diagram (iii) of Lemma
\ref{nu_d(Pn)intG(n-1,n+d-1)} can be written as
$$\begin{array}{ccc}
\PP{}(K[N_0,\dots,N_{n-s}]_1)&\stackrel{\phi_{n-s,d}}{\longrightarrow}
&\GG(n-s-1,n+d-s-1)\\
\downarrow&&\downarrow\\
\PP{}(K[x_0,\dots,x_n]_1)&\stackrel{\phi_{n,d}}{\longrightarrow}
&\GG(n-1,n+d-1)
\end{array}$$
where $\PP{n+d-s-1}$ is identified with the projection of
$\PP{n+d-1}$ from $<Z>$, and the natural map 
$\GG(n-s-1,n+d-s-1)\to\GG(n-1,n+d-1)$ is identified with the
inclusion of $G'$.

\end{lemma}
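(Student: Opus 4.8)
The plan is to deduce Lemma \ref{contained} directly from Lemma \ref{nu_d(Pn)intG(n-1,n+d-1)} by unwinding the two identifications involved: the identification of Remark \ref{identificazione} between $\PP{}(R_1)$ and $\PP{}(K[t_0,t_1]_n)$, and the multiplication-by-$p$ inclusions of part (iii) of Lemma \ref{nu_d(Pn)intG(n-1,n+d-1)}. First I would recall that, under the identification of Remark \ref{identificazione}, a linear form $L=u_0x_0+\dots+u_nx_n\in R_1$ corresponds to $L(t_0^n,t_0^{n-1}t_1,\dots,t_1^n)=u_0t_0^n+\dots+u_nt_1^n\in K[t_0,t_1]_n$. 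So the point is to compute which linear forms $L$ correspond, under this recipe, to polynomials in $K[t_0,t_1]_n$ that are divisible by $p=a_0t_0^s+\dots+a_st_1^s$.

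The key computation is the following: a form in $K[t_0,t_1]_n$ is a multiple $p\cdot q$ with $q=c_0t_0^{n-s}+c_1t_0^{n-s-1}t_1+\dots+c_{n-s}t_1^{n-s}\in K[t_0,t_1]_{n-s}$. Expanding the product $pq$ in the monomial basis $\{t_0^n,t_0^{n-1}t_1,\dots,t_1^n\}$, the coefficient of $t_0^{n-k}t_1^k$ is $\sum_{i+j=k}a_ic_j$; that is, the coefficient vector of $pq$ is the image of $(c_0,\dots,c_{n-s})$ under the $(n+1)\times(n-s+1)$ ``Toeplitz'' (convolution-by-$p$) matrix whose columns are the successive shifts of $(a_0,\dots,a_s)$. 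Reading off the coordinates $u_0,\dots,u_n$ of the corresponding $L\in R_1$, this says precisely $L=\sum_{j=0}^{n-s}c_j\,N_j$ with $N_j=a_0x_j+a_1x_{j+1}+\dots+a_sx_{j+s}$ — exactly the linear forms in the statement. Hence the image of the inclusion $\PP{}(K[t_0,t_1]_{n-s})\hookrightarrow\PP{}(K[t_0,t_1]_n)$ is the linear subspace $\PP{}(K[N_0,\dots,N_{n-s}]_1)\subset\PP{}(R_1)$. Note $N_0,\dots,N_{n-s}$ are linearly independent since multiplication by $p\ne0$ is injective, so this is a genuine $(n-s)$-dimensional linear subspace.

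With this translation in hand, the rest is pure bookkeeping. By part (iv) of Lemma \ref{nu_d(Pn)intG(n-1,n+d-1)} (together with part (ii) identifying the target Grassmannian with $\GG(n-1,n+d-1)$ via $\PP{}(K[t_0,t_1]_{n+d-1}^*)=\PP{n+d-1}$ and the rational normal curve $\Sigma$ there, and the subscheme $Z\subset\Sigma$ cut out by the zeros of $p$), the image of $\phi_{n,d}$ on this subspace is exactly $G'=\{\Lambda\in\GG(n-1,n+d-1)\mid\Lambda\cap\Sigma\supseteq Z\}$. Likewise, the commutative square (iii) of Lemma \ref{nu_d(Pn)intG(n-1,n+d-1)} becomes the displayed square once we substitute the identifications $\PP{}(K[t_0,t_1]_{n-s})=\PP{}(K[N_0,\dots,N_{n-s}]_1)$ and $\PP{}(K[t_0,t_1]_n)=\PP{}(R_1)$ on the left, and on the right identify $\PP{n+d-s-1}$ with the projection of $\PP{n+d-1}$ from the linear span $\langle Z\rangle$ (so that $\Sigma$ maps to the rational normal curve governing $\GG(n-s-1,n+d-s-1)$), under which the map $\GG(n-s-1,n+d-s-1)\to\GG(n-1,n+d-1)$ induced by $q\mapsto pq$ is precisely the inclusion of $G'$: an $(n-s-1)$-plane in the projection pulls back to the unique $(n-1)$-plane through it and $\langle Z\rangle$, which meets $\Sigma$ in (at least) $Z$.

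The only mildly delicate point — and the step I expect to require the most care — is the explicit Toeplitz/convolution computation identifying the coefficient vector of $pq$ with $\sum_j c_jN_j$, i.e.\ checking that the recipe $L\mapsto L(t_0^n,\dots,t_1^n)$ really does intertwine ``span of $N_0,\dots,N_{n-s}$'' with ``multiples of $p$''. Everything else is a formal consequence of Lemma \ref{nu_d(Pn)intG(n-1,n+d-1)} once the linear-algebra dictionary is set up; in particular the geometric statements about $Z$, $\langle Z\rangle$, and the projection follow verbatim from parts (ii)--(iv) of that lemma, so I would state them briefly and refer back rather than reprove them.
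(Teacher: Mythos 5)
Your proposal is correct and follows essentially the same route as the paper: identify the multiples $pq$ of $p$ with linear combinations $\sum_j c_jN_j$ under the identification $R_1\simeq K[t_0,t_1]_n$ (your Toeplitz/convolution computation just makes explicit what the paper asserts in one line), and then deduce everything else directly from parts (ii)--(iv) of Lemma \ref{nu_d(Pn)intG(n-1,n+d-1)}. No gaps; your extra detail on the projection from $<Z>$ is a harmless elaboration of what the paper leaves implicit.
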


\begin{proof}
It is enough to recall that the subspace
$\PP{}(K[t_0,t_1]_{n-s})\subset\PP{}(K[t_0,t_1]_n)$ corresponds
to the subspace of polynomials in $K[t_0,t_1]_n$ divisible by
$p$. These polynomials take the form $(a_{0}t_{0}^{s}+a_{1}t_{0}^{s-1}t_{1}+\cdots
+a_{s}t_{1}^{s})(b_{0}t_{0}^{n-s}+b_{1}t_{0}^{n-s-1}t_{1}+\cdots
+b_{n-s}t_{1}^{n-s})$, which, as elements of $R_1$, are
precisely those of the form $b_0N_0+\dots+b_{n-s}N_{n-s}$. The
rest of the statement is obtained directly from Lemma
\ref{nu_d(Pn)intG(n-1,n+d-1)}.
\end{proof}

\begin{rem} \rm{When $s=n$, there is only one form $N_0$ and
$G'$ is just one point of $\GG(n-1,n+d-1)$, which is precisely
the point of $V$ corresponding to $[N_0^d]$.

When $s=n-1$, the set $G'$ is a projective space of dimension
$d$, so it is the whole $\PP{}(K
[N_0,N_1]_d)$. This case allows to give some first relation
between  $\Split_d(\PP n)$ and $\GG (n-1,n+d-1)$, as we do in
the following proposition.
 }\end{rem}

\begin{propos}\label{un contenimento} The intersection
$\Split_d(\PP n)\cap \GG(n-1,n+d-1)$ contains the locus of
$(n-1)$-linear spaces that are $(n-1)-secant$ to $\Sigma$.
\end{propos}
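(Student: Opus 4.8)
The plan is to exhibit an explicit point of $\GG(n-1,n+d-1)$ that is $(n-1)$-secant to $\Sigma$ and show directly that, under the identification of Remark \ref{identificazione}, it corresponds to a completely decomposable polynomial. Recall that a general $(n-1)$-secant to $\Sigma$ is spanned by $n$ distinct points of $\Sigma$, say the points corresponding to linear forms $F\mapsto F(a_0^{(k)},a_1^{(k)})$ for $k=1,\dots,n$. Equivalently, such a space is cut out by the hyperplanes vanishing at those $n$ points, i.e. it is the span of the divisor $Z=\{[a_0^{(1)}:a_1^{(1)}],\dots,[a_0^{(n)}:a_1^{(n)}]\}$ on $\Sigma$. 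The key observation is that $Z$ is the zero locus of the degree-$n$ form $p_0(t_0,t_1)=\prod_{k=1}^n(a_1^{(k)}t_0-a_0^{(k)}t_1)\in K[t_0,t_1]_n$, so the $(n-1)$-secant space is exactly $\phi_{n,d}([p_0])$, a point of the Veronese variety $V$.

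Next I would translate this into polynomial language. By Lemma \ref{nu_d(Pn)intG(n-1,n+d-1)}(i)--(ii) together with the identification of Remark \ref{identificazione}, the map $\phi_{n,d}$ is nothing but $\phi=\nu_d$ composed with the identification $R_1\simeq K[t_0,t_1]_n$. Under that identification, the form $p_0$ corresponds to the linear form $L=u_0x_0+\cdots+u_nx_n\in R_1$ whose coefficients $u_0,\dots,u_n$ are those of $p_0=u_0t_0^n+\cdots+u_nt_1^n$; but since $p_0=\prod_{k=1}^n(a_1^{(k)}t_0-a_0^{(k)}t_1)$ factors into linear forms in $t_0,t_1$, one checks easily that the corresponding $L$ is itself a $d$-th power: indeed $\phi_{n,d}([p_0])$ lies on $V$, hence equals $[L^d]$ for the linear form $L$ attached to $p_0$. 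More to the point, the locus of $(n-1)$-secant $(n-1)$-planes to $\Sigma$ is precisely the image of $\phi_{n,d}$ restricted to the classes of forms $p_0$ that split completely over $K$ into $n$ distinct linear factors, and taking closures this is all of $V\subset\GG(n-1,n+d-1)$.

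It remains to observe that $V\subseteq\Split_d(\PP n)$: the Veronese variety parametrizes classes $[L^d]$, and $L^d=L\cdots L$ ($d$ times) is visibly the product of $d$ linear forms, so $[L^d]$ lies in the image of $\phi:\PP{}(R_1)^{\times d}\to\PP{}(R_d)$, i.e. in $\Split_d(\PP n)$. Hence
$$
\{\Lambda\in\GG(n-1,n+d-1)\mid \Lambda \text{ is }(n-1)\text{-secant to }\Sigma\}\ \subseteq\ V\ \subseteq\ \Split_d(\PP n)\cap\GG(n-1,n+d-1),
$$
which is the claim. (Alternatively, and perhaps more cleanly, one can argue with Lemma \ref{contained} in the case $s=n$: there $G'$ is the single point of $V$ corresponding to $[N_0^d]$, and letting the splitting form $p$ vary over all degree-$n$ forms sweeps out exactly the $(n-1)$-secant locus to $\Sigma$, each of whose points is of the form $[N_0^d]\in\Split_d(\PP n)$.)

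The only genuine subtlety — the part I would be most careful about — is the bookkeeping in the identification of the two ambient $\PP{{n+d\choose d}-1}$'s: one must make sure that the Plücker coordinates of $\phi_{n,d}([p_0])$, which by the proof of Lemma \ref{nu_d(Pn)intG(n-1,n+d-1)} are the maximal minors of the catalecticant-type matrix in the $u_i$, really are (up to the explicit change of coordinates illustrated in Example \ref{esempio n=2 d=3}) the coefficients of $(u_0x_0+\cdots+u_nx_n)^d$. Granting the classical fact quoted there (that these minors form a basis of $K[u_0,\dots,u_n]_d$), this is exactly the statement that $\operatorname{im}\phi_{n,d}=V=\nu_d(\PP{}(R_1))$, so no new computation is needed; everything reduces to Lemma \ref{nu_d(Pn)intG(n-1,n+d-1)} plus the trivial inclusion $V\subseteq\Split_d(\PP n)$.
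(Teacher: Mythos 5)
There is a genuine gap: you have misread which locus the proposition is about. In this paper ``$(n-1)$-secant'' refers to the length of the intersection with $\Sigma$, not to the dimension of the span: the locus in question consists of all $\Lambda\in\GG(n-1,n+d-1)$ whose intersection with $\Sigma$ contains a subscheme $Z$ of length $n-1$ (compare Lemma \ref{nu_d(Pn)intG(n-1,n+d-1)}(ii), where the Veronese variety $V$ is the set of \emph{$n$-secant} spaces, and the later notation $X_{n+2}=\{\Lambda\ |\ {\rm length}(\Lambda\cap\Sigma)\ge n-1\}$ in Theorem \ref{split3conGrass}). By taking ``a general $(n-1)$-secant'' to be the span of $n$ distinct points of $\Sigma$, you have replaced this locus by $V$ itself, and your argument only proves the (essentially trivial) inclusion $V\subseteq\Split_d(\PP n)\cap\GG(n-1,n+d-1)$, since points of $V$ are classes $[L^d]$. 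But $V$ has dimension $n$, while the locus of the proposition has dimension $(n-1)+d$ (for $d=3$ it is the $(n+2)$-dimensional component $X_{n+2}$, as the paper points out right after Example \ref{altra componente Split3P2}); a general member contains a $\PP{n-2}$ spanned by a length-$(n-1)$ divisor of $\Sigma$ plus one extra, arbitrary direction, so it is \emph{not} spanned by points of $\Sigma$ and its polynomial is not a $d$-th power. Your ``alternative'' argument via Lemma \ref{contained} with $s=n$ has the same problem: varying $p$ of degree $n$ sweeps out only $V$.

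The paper's proof uses Lemma \ref{contained} with $s=n-1$: if $\Lambda$ contains a subscheme $Z\subset\Sigma$ of length $n-1$, then the set $G'$ of $(n-1)$-spaces containing $Z$ is identified, inside the Plücker space $\PP{}(R_d)$, with $\PP{}(K[N_0,N_1]_d)$ for two linear forms $N_0,N_1$; hence $\Lambda$ corresponds to a binary form of degree $d$ in $N_0,N_1$, and any such form factors into linear forms because $K$ is algebraically closed, so $\Lambda\in\Split_d(\PP n)$. That last factorization step is exactly the content your argument never reaches, and it is what makes the statement nontrivial. To repair your proof you would need to start from an arbitrary $\Lambda$ containing such a $Z$ (not one spanned by points of $\Sigma$) and run this $s=n-1$ argument.
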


\begin{proof}
If $\Lambda$ is an $(n-1)$-secant space to $\Sigma$, then it
contains a subscheme $Z\subset\Sigma$ of length $n-1$. Hence,
Lemma \ref{contained}, implies that $\Lambda$, as an
element of $\PP{}(R_d)$, comes from a homogeneous form in
$K[N_0,N_1]_d$, so that it necessarily splits.
\end{proof}

At this point of the discussion it becomes interesting to
investigate if the previous corollary describes only an
inclusion or an equality. Let us see that, at least for $d=3$,
the intersection contains another component. We start with the
case $n=2$.

\begin{ex}\label{altra componente Split3P2}{\rm
In the set up of Example \ref{esempio n=2 d=3}, consider the
class of the polynomial $x_1(x_0-x_1)(x_1-x_2)$. This clearly
gives an element in $\PP9$ that is in $\Split_3(\PP 2)$. With
the identification given in \eqref{polinomio}, it corresponds
to the element of Pl\"ucker coordinates
$$[p_{012},p_{013},p_{014},p_{023},p_{024},p_{034},
p_{123},p_{124},p_{134},p_{234}]=
[0,0,0,2,-1,0,-4,2,0,0].$$
This point is in $\GG(1,4)$, and corresponds precisely to the
line of equations $z_0-2z_1=z_2=z_4-2z_3=0$, which does not
meet the standard rational normal curve $\Sigma\subset\PP4$.
The geometric interpretation of this line is that it is the
intersection of the following three hyperplanes:
\begin{itemize}
\item $z_2=0$, the span of the of the tangent lines of $\Sigma$
at the points $[1,0,0,0,0]$ and $[0,0,0,0,1]$,
\item $z_0-2z_1+z_2=0$, the span of the of the tangent lines of
$\Sigma$ at the points $[1,0,0,0,0]$ and $[1,1,1,1,1]$,
\item $z_2-2z_3+z_4=0$, the span of the of the tangent lines of
$\Sigma$ at the points $[0,0,0,0,1]$ and $[1,1,1,1,1]$.
\end{itemize}
We now let act the group of projectivities of $\PP{}(R_1)$ on $\Sigma$. This action is triply transitive and extends to an action as a subgroup of projectivities  of $\PP4$. As a consequence, for any
choice of different points $y_1,y_2,y_3\in\Sigma$, the
intersection of
$<T_{y_1}\Sigma,T_{y_2}\Sigma>
\cap<T_{y_1}\Sigma,T_{y_3}\Sigma>
\cap<T_{y_2}\Sigma,T_{y_3}\Sigma>$ is an element of $\GG(1,4)$
that is also in $\Split_3(\PP 2)$.

}\end{ex}

The above example can be generalized to any $n$, showing that
$\Split_3(\PP n)\cap\GG(n-1,n+2)$ contains not only the
$(n+2)$-dimensional subvariety given in Proposition \ref{un
contenimento}, but also another $(n+1)$-dimensional subvariety
(we will see in Theorem \ref{split3conGrass} that the
intersection consists exactly of those two components). We
introduce first a notation that we will use throughout the
paper.

\begin{nota} {\rm
If $\Sigma$ is a smooth curve, we will write
$\{r_1y_1,\dots,r_ky_k\}$ or $r_1y_1+\dots+r_ky_k$ to
denote the subscheme of
$\Sigma$ supported on the different points
$y_1,\dots,y_k\in\Sigma$ with respective multiplicities
$r_1,\dots,r_k$.   }\end{nota}

\begin{propos}\label{Xn+1 in Split} For any $n\ge2$, the
intersection of $\Split_3(\PP n)$ and $\GG(n-1,n+2)$ contains
the set
$$\{<Z+2y_1+2y_2>\cap<Z+2y_1+2y_3>\cap<Z+2y_2+2y_3>\ |\
Z\subset\Sigma,\  {\rm length}(Z)=n-2,\
y_1,y_2,y_3\in\Sigma\}.$$
\end{propos}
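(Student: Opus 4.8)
The plan is to reduce the statement for general $n\ge 2$ to the case $n=2$ already treated in Example~\ref{altra componente Split3P2}, by exploiting the commutative diagram of Lemma~\ref{contained}. Fix a subscheme $Z\subset\Sigma$ of length $n-2$ and three distinct points $y_1,y_2,y_3\in\Sigma$ (we may assume $y_1,y_2,y_3$ are not in the support of $Z$, the general case following by a limit argument since $\Split_3(\PP n)$ and $\GG(n-1,n+2)$ are closed). Choose a binary form $p\in K[t_0,t_1]_{n-2}$ whose zero locus is $Z$, and form the linear forms $N_0,N_1,N_2\in R_1$ as in Lemma~\ref{contained} with $s=n-2$. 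Then Lemma~\ref{contained} gives the commutative square
$$\begin{array}{ccc}
\PP{}(K[N_0,N_1,N_2]_1)&\stackrel{\phi_{2,3}}{\longrightarrow}&\GG(1,4)\\
\downarrow&&\downarrow\\
\PP{}(R_1)&\stackrel{\phi_{n,3}}{\longrightarrow}&\GG(n-1,n+2),
\end{array}$$
in which $\PP4$ is identified with the projection of $\PP{n+2}$ from $\langle Z\rangle$, the rational normal curve $\Sigma\subset\PP{n+2}$ maps isomorphically onto the standard $\Sigma'\subset\PP4$ away from $Z$, and the right vertical arrow identifies $\GG(1,4)$ with the sublocus $G'=\{\Lambda\ |\ \Lambda\cap\Sigma\supseteq Z\}$ of $\GG(n-1,n+2)$.

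Next I would track what the three spans in the statement become under the projection from $\langle Z\rangle$. A span $\langle Z+2y_1+2y_2\rangle$ is an $(n-1)$-plane containing $\langle Z\rangle$; its image under the projection $\pi\colon\PP{n+2}\dashrightarrow\PP4$ is the line $\langle 2\bar y_1+2\bar y_2\rangle=\langle T_{\bar y_1}\Sigma',T_{\bar y_2}\Sigma'\rangle$, where $\bar y_i=\pi(y_i)\in\Sigma'$. Since projection from a linear center is linear on the Grassmannian level and commutes with taking spans and intersections of subspaces all containing that center, the intersection
$$\langle Z+2y_1+2y_2\rangle\cap\langle Z+2y_1+2y_3\rangle\cap\langle Z+2y_2+2y_3\rangle$$
is exactly the preimage under $\pi$ of
$$\langle T_{\bar y_1}\Sigma',T_{\bar y_2}\Sigma'\rangle\cap\langle T_{\bar y_1}\Sigma',T_{\bar y_3}\Sigma'\rangle\cap\langle T_{\bar y_2}\Sigma',T_{\bar y_3}\Sigma'\rangle,$$
together with $\langle Z\rangle$ — i.e. it is the $(n-1)$-plane in $\GG(n-1,n+2)$ corresponding, via $G'\cong\GG(1,4)$, to the line produced in Example~\ref{altra componente Split3P2}. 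By that example this line lies in $\Split_3(\PP2)$, so by the commutativity of the square the corresponding $(n-1)$-plane is the image under $\phi_{n,3}$ of a point of $\PP{}(K[N_0,N_1,N_2]_1)$ whose cube already splits; since $\phi_{n,3}$ is compatible with the identification of Plücker space with $\PP{}(R_3)$ and the inclusion $\PP{}(K[N_0,N_1,N_2]_3)\subset\PP{}(R_3)$ of split forms, this plane lies in $\Split_3(\PP n)\cap\GG(n-1,n+2)$.

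The one point needing care — and the main obstacle — is the compatibility claim: that the identification of $G'$ with $\GG(1,4)$ used in Lemma~\ref{contained} really does intertwine the two ambient-space identifications (Plücker coordinates on $\PP{n+2}$ versus on $\PP4$, and coefficient coordinates on $\PP{}(R_3)$ for $n$ versus for $2$) in the way needed to transport the membership ``$\in\Split_3$'' downstairs. This is essentially the content of the commutative diagram (iii)--(iv) of Lemma~\ref{nu_d(Pn)intG(n-1,n+d-1)} restated in Lemma~\ref{contained}: the vertical inclusion $\GG(n-s-1,n+d-s-1)\to\GG(n-1,n+d-1)$ is induced by multiplication by $p$, and on the polynomial side multiplication by $p$ on binary forms corresponds precisely to the inclusion $K[N_0,N_1,N_2]_3\hookrightarrow R_3$, which visibly sends split forms to split forms. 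So I would spend the bulk of the writeup making this dictionary explicit — unwinding that $\phi_{n,3}$ restricted to $\PP{}(K[N_0,N_1,N_2]_1)$ is $\phi_{2,3}$ followed by the inclusion, hence that a split cube $(b_0N_0+b_1N_1+b_2N_2)^{?}$... more precisely that a form of $K[N_0,N_1,N_2]_3$ that factors as a product of three linear forms in the $N_i$ also factors as a product of three elements of $R_1$ — and then the rest is the formal transport along the square together with the already-established Example~\ref{altra componente Split3P2}. The closedness remark handles the cases where the $y_i$ collide with $Z$ or the $Z$-scheme is non-reduced.
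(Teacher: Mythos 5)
Your proposal is correct and follows essentially the same route as the paper's proof: project from $\langle Z\rangle$ to reduce to the $n=2$ case of Example \ref{altra componente Split3P2}, and use the identifications of Lemma \ref{contained} (the inclusion $K[N_0,N_1,N_2]_3\subset R_3$ induced by multiplication by $p$) to transport the split polynomial $F$ back to $\PP{}(R_3)$, concluding $\Lambda\in\Split_3(\PP n)$. Your extra remark on handling degenerate configurations of $Z,y_1,y_2,y_3$ by closedness is a harmless refinement of the same argument.
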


\begin{proof} Fix a subscheme $Z\subset\Sigma$ of length
$n-2$ and let $\Lambda\in\GG(n-1,n+2)$ be a subspace that can be
written as
$$\Lambda=<Z+2y_1+2y_2>\cap<Z+2y_1+2y_3>\cap<Z+2y_2+2y_3>.$$
In particular $\Lambda$ contains $Z$, so that it is contained in
the set $G'$ of Lemma \ref{contained}. Consider the projection
of $\PP{n+2}$ to $\PP4$ from $<Z>$. In this way, $\Sigma$
becomes a rational normal curve $\Sigma'\subset\PP4$, while
$\Lambda$ becomes a line $\Lambda'\subset\PP4$ that can be
written as
$$\Lambda'=<2y'_1+2y'_2>\cap<2y'_1+2y'_3>\cap<2y'_2+2y'_3>$$
where each $y'_i\in\Sigma'$ is the image of $y_i$. By Example
\ref{altra componente Split3P2}, the line $\Lambda'$ is
an element of $\Split_3(\PP 2)$. With the identifications of
Lemma \ref{contained}, this should be interpreted as follows.
The set $G'$ is identified with $\GG(1,4)$, whose Pl\"ucker
ambient space is $\PP{}(K[N_0,N_1,N_2]_3)$, so that the line
$\Lambda'$ is represented by a polynomial $F\in K[N_0,N_1,N_2]_3$
that factor into three linear forms. Hence, regarding
$\Lambda\in G'\subset\GG(n-1,n+2)$ as an element of its ambient
Pl\"ucker space $\PP{}(K[x_0,\dots,x_n]_d)$, it is represented
by the same polynomial $F\in K[x_0,\dots,x_n]_d$. Therefore
$\Lambda\in\Split_3(\PP n)$.
\end{proof}

\begin{ex}\label{non in Split3P2}{\rm
In the same way as in Proposition \ref{Xn+1 in Split}, it is
possible to prove that certain elements of $\GG(n-1,n+2)$ are
not in $\Split_3(\PP n)$. In particular, we will need later on
(see Lemma \ref{quasi}) to check that, given different points
$y_1,\dots,y_k$ on the rational normal curve
$\Sigma\subset\PP{n+2}$ and nonnegative integers
$r_1,\dots,r_k$ such that $r_1+\dots+r_k=n$, the linear
subspaces
\begin{enumerate}
\item $<(r_1+2)y_1,r_2y_2,r_3y_3\dots,r_ky_k>\cap
<r_1y_1,(r_2+2)y_2,r_3y_3,\dots,r_ky_k>\cap
<(r_1-2)y_1,(r_2+4)y_2,r_3y_3,\dots,r_ky_k>$
\item $<(r_1+2)y_1,r_2y_2,r_3y_3,\dots,r_ky_k>\cap
<r_1y_1,(r_2+1)y_2,(r_3+1)y_3,r_4y_4,\dots,r_ky_k>  \cap <(r_1-2)y_1,(r_2+3)y_2,(r_3+1)y_3,r_4y_4,\dots,r_ky_k>$
\item $<(r_1+2)y_1,r_2y_2,\dots,r_ky_k>\cap
<r_1y_1,(r_2+2)y_2,r_3y_3,\dots,r_ky_k>\cap
<(r_1-2)y_1,(r_2+3)y_2,(r_3+1)y_3,r_4y_4,\dots,r_k>$
\item $<(r_1+2)y_1,r_2y_2,\dots,r_ky_k>\cap
<r_1y_1,(r_2+2)y_2,r_3y_3,\dots,r_ky_k>\cap
<(r_1-1)y_1,(r_2-1)y_2,(r_3+4)y_3,r_4y_4,\dots,r_k>$
\item $<(r_1+2)y_1,r_2y_2,\dots,r_ky_k>\cap
<r_1y_1,(r_2+2)y_2,r_3y_3,\dots,r_ky_k>
\cap <(r_1-1)y_1,(r_2-1)y_2,(r_3+3)y_3,(r_4+1)y_4,r_5y_5,\dots,r_k>$
\end{enumerate}
have dimension $n-1$ and, as elements of $\GG(n-1,n+2)$, they
are not in $\Split_3(\PP n)$. To prove that, we first observe
that all those subspaces always contain a finite subscheme
$Z\subset\Sigma$ of length $n-2$, namely
$<(r_1-2)y_1,r_2y_2,r_3y_3\dots,r_ky_k>$ in the first three
cases and $<(r_1-1))y_1,(r_2-1)y_2,r_3y_3\dots,r_ky_k>$ in the
last two cases. Hence, projecting from $Z$, we are reduced to
the case $n=2$ and we need to check that, given points
$y_1,y_2,y_3,y_4$ in the rational normal curve in $\PP4$, the
subspaces  
\begin{enumerate} 
\item$<4y_1>\cap<2y_1,2y_2>\cap<4y_2>$
\item$<4y_1>\cap<2y_1,y_2,y_3>\cap<3y_2,y_3>$
\item$<4y_1>\cap<2y_1,2y_2>\cap<3y_2,y_3>$
\item$<3y_1,y_2>\cap<y_1,3y_2>\cap<4y_3>$
\item$<3y_1,y_2>\cap<y_1,3y_2>\cap<3y_3,y_4>$
\end{enumerate}
are lines and that, as elements of $\GG(1,4)$, they are not in
$\Split_3(\PP 2)$. By the homogeneity of $\Sigma$, we can
assume $y_1=[1,0,0,0,0]$, $y_2=[0,0,0,0,1]$, $y_3=[1,1,1,1,1]$
and $y_4=[1,\lambda,\lambda^2,\lambda^3,\lambda^4]$ with
$\lambda\ne0,1$. With this choice, the above five spaces become
respectively the lines
\begin{enumerate}
\item$z_4=z_2=z_0=0$
\item$z_4=z_2-z_3=z_0-z_1=0$
\item$z_4=z_2=z_0-z_1=0$
\item$z_3=z_1=z_0-4z_1+6z_2-4z_3+z_4=0$
\item$z_3=z_1=\lambda
z_0+(-3\lambda-1)z_1+(3\lambda+3)z_2+(-\lambda-3)z_3+z_4=0
$
\end{enumerate}
with Pl\"ucker coordinates
$[p_{012},p_{013},p_{014},p_{023},p_{024},p_{034},p_{123},p_{124},p_{134},p_{234}]$
equal to
\begin{enumerate} 
\item$[0,0,0,0,-1,0,0,0,0,0]$
\item$[0,0,0,0,-1,1,0,1,-1,0]$
\item$[0,0,0,0,-1,0,0,1,0,0]$
\item$[0,-1,0,0,0,0,6,0,-1,0]$
\item$[0,-\lambda,0,0,0,0,3\lambda+3,0,-1,0]$
\end{enumerate}
Using \eqref{polinomio}, we get respective polynomials
\begin{enumerate}
\item$-2x_1(3x_0x_2+x_1^2)$
\item$-6x_0x_1x_2+3x_0x_2^2-2x_1^3+6x_1^2x_2-3x_1x_2^2$
\item$-x_1(6x_0x_2+2x_1^2-3x_1x_2)$
\item$-3x_1(x_0^2-2x_1^2+x_2^2)$
\item$-3x_1(\lambda x_0^2-(\lambda+1)x_1^2+x_2^2)$.
\end{enumerate}
Since none of the above polynomials split into linear factors,
they do not represent points in $\Split_3(\PP 2)$.

}\end{ex}

\section{Tangential varieties to Veronese varieties and
Grassmannians}\label{tg}

We want to devote the rest of the paper to understand the
intersection of $\Split_{d}(\PP n)$ and $\GG(n-1,n+d-1)$. The
strategy will be to relate the algebraic properties of
polynomials with the geometry of subspaces in $\PP{n+d-1}$
(where we have the rational normal curve $\Sigma$ defining
$V$, thus giving the connection between the two approches).
The main idea is that a polynomial representing a point in
$\Split_{d}(\PP n)$ is characterized by having many linear
factors. This is translated in terms of geometry by means of
osculating spaces, and we will devote this section to the first
case, the tangential varities.

We recall first the background for this theory.

\begin{nota} \rm{
Denote with $O_{x}^{k}(X)$ the $k$-th osculating space to
a projective variety $X$ at the point $x\in X$, and with $\tau(X)$ the tangential variety to $X$ (observe
that $O^{0}_{x}(X)=x$ and $O^{1}_{x}(X)=T_{x}(X)$).}
\end{nota}

\begin{rem}\label{Split-osc}\rm{ We recall
from \cite{BCGI} that, for any $[L^{d}]\in V$, the
elements of $O^{k}_{[L^{d}]}(V)$ are precisely those represented
by forms of the type $L^{d-k}F$ where $F\in R_{k}$. Therefore
any point of $\Split_{d}(\PP n)$, which can be written as
$[L_{1}^{m_{1}}\cdots L_{t}^{m_{t}}]$ with $L_1,\dots,L_t\in
R_1$ different linear forms and $m_1,\dots,m_t$ 
positive integers with $\sum_{i=1}^{t}m_{i}=d$, can be obtained
as the only point in the intersection
$O_{[L_{1}^{d}]}^{d-m_{1}}(V)\cap \cdots \cap
O_{[L_{t}^{d}]}^{d-m_{t}}(V)$. Hence we have an equality
$$\Split_{d}(\PP n)=
\bigcup_{\footnotesize{\begin{array}{c}\sum_{i=1}^{t}m_{i}=d\\
\Lambda_1,\dots,\Lambda_t\in
V\end{array}}}O_{\Lambda_1}^{d-m_{1}}(V)\cap
\cdots \cap O_{\Lambda_t}^{d-m_{t}}(V)$$
where the subspaces $\Lambda_1,\dots,\Lambda_t$ are assumed to
be different. 
In the particular case $d=3$, we can simply write
$$\Split_{3}(\PP
n)=\tau(V)\bigcup\big(\bigcup_{\Lambda_{1},\Lambda_{2},
\Lambda_{3}\in V}O^{2}_{\Lambda_{1}}(V)\cap
O^{2}_{\Lambda_{2}}(V)\cap O^{2}_{\Lambda_{3}}(V)\big)$$
because any form of degree three containing a square
necessarily splits.
  }\end{rem}

In order to understand the
intersection of $\Split_{d}(\PP n)$, with $\GG(n-1,n+d-1)$, it
is therefore enough to understand the intersection of the osculating
spaces to $V$. A first geometric result in this direction is the
following.

\begin{propos}\label{punti in osculanti} Let $\Lambda$ be a
point in the osculating space $O^k_{\Lambda_0}(V)$ with $k<d$.
If we regard $\Lambda_0$ as an $n$-secant linear subspace to the
rational normal curve $\Sigma\subset\PP{n+d-1}$, then
$\Lambda_0$ contains the points (counted with multiplicity) of
the intersection $\Lambda\cap\Sigma$.
\end{propos}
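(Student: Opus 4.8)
\medskip
\noindent\textbf{Proof strategy.}\ The plan is to reduce the statement to a pointwise comparison of multiplicities along $\Sigma$ and then to an elementary fact about polynomial rings. Since the conclusion is equivalent to the inequality of effective divisors $\Lambda\cap\Sigma\le\Lambda_0\cap\Sigma$ on $\Sigma$, it is enough to prove, for every point $y\in\Sigma$, that $\mathrm{mult}_y(\Lambda\cap\Sigma)\le\mathrm{mult}_y(\Lambda_0\cap\Sigma)$. The group $PGL_2$ acts transitively on $\Sigma$, and the whole situation is equivariant for this action: the map $\phi_{n,d}$, and hence the identification of Remark \ref{identificazione} between $\PP{}(R_d)$ and the Pl\"ucker ambient space of $\GG(n-1,n+d-1)$, is $GL_2$-equivariant; the osculating spaces to $V$ and to $\Sigma$ are equivariant; and writing $\Lambda$, by Remark \ref{Split-osc}, as the class of a form $L^{d-k}F$ with $F\in R_k$, the translate $g\cdot\Lambda$ is the class of $(g\cdot L)^{d-k}(g\cdot F)$. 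So I may assume $y=y_0:=[1:0:\cdots:0]\in\Sigma$, the point where the form $t_1$ vanishes.

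Next I translate both sides of the desired inequality at $y_0$. On one hand, $\Lambda_0=[L^d]\in V$, so $\Lambda_0\cap\Sigma$ is the zero divisor on $\Sigma$ of the form $p_0\in K[t_0,t_1]_n$ corresponding to $L$ under $R_1\cong K[t_0,t_1]_n$; thus $\mathrm{mult}_{y_0}(\Lambda_0\cap\Sigma)=\mathrm{ord}_{t_1}(p_0)$, and unwinding the identification of Remark \ref{identificazione} this is $\ge m$ precisely when $L\in\langle x_m,\dots,x_n\rangle$. On the other hand, the osculating spaces to the standard rational normal curve $\Sigma$ at $y_0$ are the coordinate subspaces $\{z_m=\cdots=z_{n+d-1}=0\}$, so $\Lambda\cap\Sigma$ contains the fat point $m y_0$ of $\Sigma$ if and only if $\Lambda$, regarded as a point of the Pl\"ucker ambient space, contains that coordinate subspace, i.e. if and only if every Pl\"ucker coordinate $p_{i_1\cdots i_d}$ of $\Lambda$ with $i_1<m$ vanishes. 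By Lemma \ref{contained} applied with $p=t_1^m$ (so that $s=m$, the forms become $N_j=x_{j+m}$, and the subscheme $Z\subset\Sigma$ cut by $p$ is $m y_0$), this linear condition describes exactly the linear subspace $\PP{}(K[x_m,\dots,x_n]_d)\subseteq\PP{}(R_d)$ --- a fact one checks on the Veronese $\phi_{n-m,d}$ and extends by linearity. Hence $\mathrm{mult}_{y_0}(\Lambda\cap\Sigma)\ge m$ if and only if the form $L^{d-k}F$ representing $\Lambda$ lies in $K[x_m,\dots,x_n]_d$.

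Comparing the two translations, the proposition is reduced to the following: if $L^{d-k}F\in K[x_m,\dots,x_n]_d$ with $F\in R_k$ and $k<d$, then $L\in\langle x_m,\dots,x_n\rangle$. This is where the hypothesis $k<d$ is essential: it makes $L$ a factor of $L^{d-k}F$, and a nonzero product in $K[x_0,\dots,x_n]=K[x_m,\dots,x_n][x_0,\dots,x_{m-1}]$ that belongs to the subring $K[x_m,\dots,x_n]$ can only have factors in that subring (compare degrees in $x_0,\dots,x_{m-1}$); hence $L\in K[x_m,\dots,x_n]_1=\langle x_m,\dots,x_n\rangle$. This yields $\mathrm{mult}_{y_0}(\Lambda\cap\Sigma)\le\mathrm{mult}_{y_0}(\Lambda_0\cap\Sigma)$, and by $PGL_2$-homogeneity the same holds at every point of $\Sigma$, whence $\Lambda\cap\Sigma\le\Lambda_0\cap\Sigma$ and $\Lambda_0$ contains $\Lambda\cap\Sigma$ with its multiplicities. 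The step needing real care is the second translation above: one must be sure that ``$\Lambda\cap\Sigma$ contains $m y_0$'' is controlled by the vanishing of the same set of Pl\"ucker coordinates whether or not $\Lambda$ actually lies on $\GG(n-1,n+d-1)$, and that Lemma \ref{contained} does identify this linear subspace with $\PP{}(K[x_m,\dots,x_n]_d)$; granting this, the remaining steps are routine.
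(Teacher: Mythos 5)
Your proof is correct, and it rests on the same two pillars as the paper's own argument: Lemma \ref{contained} (containment of a subscheme $Z\subset\Sigma$ cut out by a binary form $p$ translates into the representing polynomial lying in $K[N_0,\dots,N_{n-s}]_d$) together with the elementary divisibility fact that, since $d-k>0$, a product $L^{d-k}F$ lying in a polynomial subring generated by linear forms forces $L$ itself to lie in that subring. The difference is in the packaging. The paper applies Lemma \ref{contained} just once, to the whole schematic intersection $Z=\Lambda\cap\Sigma$, whatever its length and support: $\Lambda$ trivially belongs to the locus $G'$ of $(n-1)$-planes containing $Z$, so $L^{d-k}M\in K[N_0,\dots,N_{n-s}]_d$, hence $L\in K[N_0,\dots,N_{n-s}]_1$, and a second application of the lemma puts $\Lambda_0$ in $G'$, i.e.\ $Z\subset\Lambda_0$ — done, with no pointwise analysis. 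Your route instead localizes at one point of $\Sigma$ at a time and invokes $PGL_2$-homogeneity; this is valid, but it obliges you to justify the equivariance of the identification of $\PP{}(R_d)$ with the Pl\"ucker space (true, and standard: the two actions agree on the Veronese variety, which spans the ambient space, but it is an extra fact the paper's proof never needs) and to redo in explicit coordinates, for $p=t_1^m$, what Lemma \ref{contained} already provides for an arbitrary $Z$. What your version buys is concreteness: one sees exactly which Pl\"ucker coordinates express $\Lambda\supseteq O^{m-1}_{y_0}(\Sigma)$ and the subring becomes the visible $K[x_m,\dots,x_n]$. Finally, the step you flag as delicate is unproblematic: the statement only makes sense when $\Lambda$ actually lies on $\GG(n-1,n+d-1)$ (otherwise $\Lambda\cap\Sigma$ is undefined, and this is how the proposition is used later), and for an honest $(n-1)$-plane the condition of containing $my_0$ is containment of the osculating space $O^{m-1}_{y_0}(\Sigma)$, whose linear translation in the Pl\"ucker ambient space is exactly the $s=m$, $N_j=x_{j+m}$ instance of Lemma \ref{contained}; your check on Veronese points plus the dimension count is a correct verification of it.
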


\begin{proof} Let $L\in R_1$ be a linear form such that
$\Lambda_0=[L^d]$. Since $\lambda\in O^k(V)$ with $k<d$, Remark
\ref{Split-osc} implies that $\Lambda$ is represented by a form
of the type $L^{d-k}M$. 

On the other hand, let $Z\subset\Sigma$ be the schematic
intersection of $\Lambda$ and $\Sigma$ and set $s={\rm
length}(Z)$. Let $p\in K[t_0,t_1]_s$ be the polynomial whose
scheme of zeros in $\PP1$ corresponds to $Z\subset\Sigma$. By
Lemma \ref{contained}, the Pl\"ucker ambient space of the set
$G'$ of $(n-1)$-dimensional subspaces containing $Z$ is
$\PP{}(K[N_0,\dots,N_{n-s}]_d)$, for some linear forms
$N_0,\dots,N_{n-s}\in K[x_0,\dots,x_n]$. 

Hence we get 
$L^{d-k}M\in K[N_0,\dots,N_{n-s}]$. Since $d-k>0$, necessarily
$L\in K[N_0,\dots,N_{n-s}]$. Again by Lemma \ref{contained},
this implies that $\Lambda_0$ is in $G$, i.e. it contains $Z$,
as wanted.
\end{proof}

We introduce next the main tool that we will use to study the
osculating spaces to $V$ and their intersection with
$\GG(n-1,n+d-1)$.

\begin{defi}\label{Z} \rm{Consider the incidence variety
$$I:=\{(\Lambda,y)\in \GG (n-1,n+d-1)\times \Sigma\; | \;
 {\rm length}_y(\Lambda\cap \Sigma)\ge r\}\subset
\GG (n-1,n+d-1)\times \Sigma.$$
Fix $\Lambda_{0}\in \GG
(n-1,n+d-1)$ such that the intersection between
$\Lambda_{0}$ and $\Sigma$ in $\PP {n+d-1}$ is a
zero-dimensional scheme whose support at a point $y\in 
\Lambda_{0}\cap\Sigma$ has length $r$. Let $\pi_{1}$ be the
projection from $I$ to $\GG(n-1,n+d-1)$. We denote by
$Z_y\subset \PP {{n+d \choose d}-1}$ the image by $\pi_1$ of
a neighborhood of $I$ near $(\Lambda_0,y)$.}
\end{defi}

\begin{rem}\label{rami} \rm{Let
$\Lambda_0\in V$ be a point corresponding to a subspace
$\Lambda_0\subset\PP{n+d-1}$ meeting $\Sigma$ at points
$y_1,\dots,y_k$ with respective multiplicities
$r_1,\dots,r_k$ (hence $r_1+\dots+r_k=n$).  With the above
notation, each $Z_i:=Z_{y_i}$ is smooth at $\Lambda_0$ and a
neighbourhood of $V$ near $\Lambda_0$ is given by the intersection
$Z_1\cap\dots\cap Z_k$. Therefore
$$T_{\Lambda_{0}}(V)=\bigcap_{i=1}^{k}T_{\Lambda_{0}}(Z_{i}).$$
The same equality does not hold for arbitrary osculating spaces,
in which we only have one inclusion:
$$\bigcap_{i=1}^{k}O^s_{\Lambda_{0}}(Z_{i})\subset
O^s_{\Lambda_{0}}(V)$$
for any $s$. Hence, in order to study tangent or osculating spaces to
the Veronese variety $V$ we will study first those spaces for the
$Z_i$.}
\end{rem}

We devote the rest of the section to the tangent spaces to the
Grassmannian, while we will see in later sections that the
inclusion we have for second osculating spaces is enough if
$d=3$. The first step will be to compute the intersection of
$\GG(n-1,n+d-1)$ with the tangent spaces to each of the above
neighborhoods.

\begin{theorem}\label{TLxZ} Let $\Lambda\in \GG
(n-1,n+d-1)$ meeting $\Sigma$ at a zero-dimensional scheme
whose support at a point $y\in \Lambda_{0}\cap\Sigma$ has
length $r$. If $Z_y$ is as in Definition \ref{Z}, then the
intersection between the tangent space to $Z_y$ in
$\Lambda_0$ and the Grassmannian $\GG (n-1,n+d-1)$ is 
$$T_{\Lambda_0}(Z_y)\cap \GG (n-1,n+d-1)=$$ 
$$=\{\Lambda \in \GG (n-1,n+d-1) \; | \; \Lambda \supset
O^{r-1}_{x}(\Sigma), \; \dim (\Lambda 
\cap \Lambda_0)\geq n-2\}\cup$$
$$\cup\{\Lambda \in \GG (n-1,n+d-1)
\; |
\;O^{r-2}_{x}(\Sigma)\subset\Lambda\subset
<\Lambda_0,O^r_{x}(\Sigma)>
\}.$$
\end{theorem}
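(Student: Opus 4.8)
The plan is to work in Plücker coordinates attached to the flag determined by $\Sigma$ near $y$, reducing the statement to a direct computation of the tangent space $T_{\Lambda_0}(Z_y)$. Since $\Lambda_0$ is a fixed $(n-1)$-space in $\PP{n+d-1}$ meeting $\Sigma$ at $y$ with multiplicity $r$ (and at other points with the remaining multiplicities), I would first choose coordinates $z_0,\dots,z_{n+d-1}$ on $\PP{n+d-1}$ so that $O^{j}_{y}(\Sigma)$ is the span of the first $j+1$ coordinate points, and so that $\Lambda_0$ is a coordinate subspace containing $O^{r-1}_{y}(\Sigma)$. The variety $Z_y$ is, near $\Lambda_0$, the image under $\pi_1$ of the incidence condition $\mathrm{length}_y(\Lambda\cap\Sigma)\ge r$; parametrizing $\Sigma$ locally at $y$ by $t\mapsto(1,t,t^2,\dots)$, this condition says that the $r$ linear equations cutting out $\Lambda$, evaluated along $\Sigma$, vanish to order $\ge r$ at $t=0$. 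I would write an explicit $(n-1)\times(n+d)$ matrix of coordinates for $\Lambda$ in a neighbourhood of $\Lambda_0$ that manifestly satisfies this, differentiate at $\Lambda_0$, and read off $T_{\Lambda_0}(Z_y)$ as a linear subspace of the Plücker space.

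The second step is to intersect this linear space with $\GG(n-1,n+d-1)$ and interpret the result geometrically. A tangent vector to $Z_y$ at $\Lambda_0$ corresponds, via the identification $T_{\Lambda_0}\GG(n-1,n+d-1)=\Hom(\Lambda_0,\PP{n+d-1}/\Lambda_0)$, to a homomorphism; the condition of lying in $T_{\Lambda_0}(Z_y)$ forces this homomorphism to respect the osculating filtration of $\Sigma$ at $y$ in a precise way (it must kill $O^{r-2}_y(\Sigma)$ and map $O^{r-1}_y(\Sigma)/O^{r-2}_y(\Sigma)$ into $<\Lambda_0,O^r_y(\Sigma)>/\Lambda_0$). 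An element $\Lambda\in\GG(n-1,n+d-1)$ lying on a line of $\GG(n-1,n+d-1)$ through $\Lambda_0$ corresponds to a pencil, i.e. $\dim(\Lambda\cap\Lambda_0)\ge n-2$; chasing the filtration condition through, such a $\Lambda$ either still contains $O^{r-1}_y(\Sigma)$ — giving the first set in the union — or it drops to containing only $O^{r-2}_y(\Sigma)$ while remaining inside $<\Lambda_0,O^r_y(\Sigma)>$ — giving the second set. Conversely, I would check that every $\Lambda$ in either set does span a line in $\GG(n-1,n+d-1)$ whose tangent direction at $\Lambda_0$ satisfies the filtration condition, hence lies in $T_{\Lambda_0}(Z_y)\cap\GG(n-1,n+d-1)$; since $T_{\Lambda_0}(Z_y)\cap\GG(n-1,n+d-1)$ is cut out on $\GG(n-1,n+d-1)$ by linear equations, it is a union of linear sections, and the two families described exhaust it by a dimension count.

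I would be careful about two bookkeeping points. First, the case $r=1$ (so $O^{r-2}_y(\Sigma)=O^{-1}_y(\Sigma)=\emptyset$) and the boundary cases $r$ near $n$ should be checked to make sure the osculating spaces $O^{r-2}_y(\Sigma)$, $O^{r-1}_y(\Sigma)$, $O^r_y(\Sigma)$ are all defined and in the expected position relative to $\Lambda_0$; the conventions $O^0_x(\Sigma)=x$ fixed in the Notation handle the low end. Second, I must keep track of whether $\Lambda_0\cap\Sigma$ has other points $y'\ne y$: near $\Lambda_0$ the variety $Z_y$ only sees the length-$r$ condition at $y$, so the other intersection points impose no constraint on $T_{\Lambda_0}(Z_y)$, which is why the answer only involves osculating spaces of $\Sigma$ at the single point $x=y$.

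The main obstacle I anticipate is the first step: setting up the local parametrization of $Z_y$ cleanly enough that the derivative computation is transparent rather than a morass of minors. The incidence condition "$\Sigma$ meets $\Lambda$ to order $\ge r$ at $y$" is naturally phrased in terms of the equations of $\Lambda$ (the row space of a matrix), whereas the Plücker/tangent-space computation wants the column space; translating between the two — essentially the fact, used already in Lemma \ref{nu_d(Pn)intG(n-1,n+d-1)}, that maximal minors of a band matrix of the coefficients of a binary form give the Veronese — and differentiating through it correctly is where the real work lies. Once $T_{\Lambda_0}(Z_y)$ is in hand as an explicit linear subspace, the geometric identification of its intersection with the Grassmannian is comparatively routine, using the standard description of lines on $\GG(n-1,n+d-1)$ through a fixed point together with the osculating-flag interpretation of $Z_y$.
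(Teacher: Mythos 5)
Your first step (adapted coordinates along the osculating flag at $y$, an explicit matrix parametrization of $Z_y$ near $\Lambda_0$, differentiation to get $T_{\Lambda_0}(Z_y)$ in Pl\"ucker coordinates) is exactly what the paper does, and your intrinsic description of the tangent directions as homomorphisms killing $O^{r-2}_y(\Sigma)$ and sending $O^{r-1}_y(\Sigma)/O^{r-2}_y(\Sigma)$ into $<\Lambda_0,O^r_y(\Sigma)>/\Lambda_0$ is correct, as is your converse inclusion via pencils. The gap is in the forward direction of the second step. The theorem is about all points of $\GG(n-1,n+d-1)$ lying in the \emph{embedded} linear space $T_{\Lambda_0}(Z_y)\subset\PP{{n+d\choose d}-1}$; such a point is an honest Pl\"ucker point, not a tangent direction, so the identification $T_{\Lambda_0}\GG(n-1,n+d-1)=\Hom(\Lambda_0,K^{n+d}/\Lambda_0)$ and the filtration condition say nothing about it directly. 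You only analyze those $\Lambda$ that lie on a line of the Grassmannian through $\Lambda_0$, i.e.\ with $\dim(\Lambda\cap\Lambda_0)\ge n-2$; but that every point of $T_{\Lambda_0}(Z_y)\cap\GG(n-1,n+d-1)$ has this property (and moreover contains $O^{r-2}_y(\Sigma)$, etc.) is precisely the content to be proved, not a hypothesis you may impose. Your closing claim --- that the intersection is cut out by linear equations, hence is ``a union of linear sections'' exhausted ``by a dimension count'' --- does not close this hole: a linear section of a Grassmannian is in general not a union of pencils through a fixed point, and a dimension count cannot exclude components consisting of subspaces meeting $\Lambda_0$ in dimension less than $n-2$.

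The paper's proof supplies exactly this missing argument, and it is where the real work lies: in the affine chart $p_{r,\ldots,r+d-1}\ne0$ an arbitrary point of the intersection is written as the row space of a matrix $B$ with central identity block, the linear part of the parametrization pins down the entries of $B$, and the vanishing of all the remaining Pl\"ucker coordinates (the second half of \eqref{coordTZ}) forces the submatrix of $B$ complementary to the identity block to have rank at most one; this kills $a_2,\dots,a_r$ and produces the dichotomy $B_1$/$B_2$, which translates exactly into the two sets of the statement. Until you prove an analogue of that rank argument (or otherwise show that every Grassmannian point in the linear span $T_{\Lambda_0}(Z_y)$ meets $\Lambda_0$ in dimension at least $n-2$ and contains the appropriate osculating space), your proposal establishes only one inclusion.
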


\begin{proof} Let the map $\PP 1 \rightarrow \PP {n+d-1}$
defined by $(t_{0},t_{1})\mapsto(t_{0}^{n+d-1},
t_{0}^{n+d-2}t_{1}, \ldots t_{1}^{n+d-1})$ be a
parameterization of $\Sigma$; without loss of generality we
may assume that $y=[1,0, \ldots , 0]\in \Sigma$ and that
$a_{1}, \ldots ,a_{r}\in K$ are such that
$\nu_{n+d-1}((t_{1}^{r}+a_{1}t_{1}^{r-1}t_{0}+ \cdots +
a_{r-1}t_{1}t_{0}^{r-1}+a_{r}t_{0}^{r})^{*})=y$. Hence
$\Lambda_{0}\in \GG (n-1,n+d-1)$ is defined in $\PP {n+d-1}$
by the equations $z_{r}=\cdots =z_{r+d-1}=0$. We will study
the affine tangent space $\hat{T}_{\Lambda_{0}}(Z_y)$ in the
affine chart of the Pl\"ucker coordinates $\{p_{r,\ldots ,
r+d-1}\neq 0\}$. Observe that in this affine chart we have a
system of coordinates given by $\{p_{r,\ldots ,\hat i,\dots,
r+d-1,j}\}$, with $i\in\{r,\ldots,r+d-1\}$ and
$j\not\in\{r,\ldots, r+d-1\}$, while the other Pl\"ucker
coordinates are homogeneous forms of degree at least two in
these coordinates.

\noindent Let $H_{i}$ for $i=1, \ldots , n+d-r$ be the
hyperplane of
$\PP {n+d-1}$ defined by the equation
$$H_i:a_{r}z_{i-1}+a_{r-1}z_{i}+ \cdots +
a_{1}z_{r+i-2}+z_{r+i-1}=0.$$ 
Hence $Z_y$ is described by
\begin{equation}\label{system H}\left\{ \begin{array}{l}
H_{1}+\mu_{1,d+1}H_{d+1}+ \cdots +\mu_{1,n+d-r}H_{n+d-r}=0\\
\vdots \\
H_{d}+\mu_{d,d+1}H_{d+1}+ \cdots +\mu_{d,n+d-r}H_{n+d-r}=0
\end{array}\right.\end{equation}
with $\mu_{i,j}\in K$ for $i=1, \ldots , d$ and $j=d+1, \ldots , n+d-r$.
\\
We want to write the matrix of the coefficients of the
previous system since it will be the matrix whose $d\times
d$ minors will give Pl\"ucker coordinates of $Z_y$. Actually
we will be interested only in $T_{\Lambda_{0}}(Z_y)$ hence we
can write such a matrix modulo all the terms of degree
bigger or equal then $2$:
\begin{equation}\label{A}A:=\left( \begin{array}{ccc}
\left.\begin{array}{ccccccc}
a_{r}&a_{r-1}&\cdots &\cdots & \cdots &a_2 & a_{1}\\
&a_{r}&a_{r-1} & && &a_{2}\\
&&\ddots &\ddots &&& \vdots \\
&&&a_{r}&a_{r-1}&\cdots&a_{d} 
\end{array}\right|
&
\left. \begin{array}{cccc}
1&0&\cdots &0 \\
a_{1}&1&&\\
\vdots &\ddots &\ddots &0\\
a_{d-1} &\cdots  &a_{1} &1
\end{array}\right|
&
\begin{array}{ccc}
\mu_{1,d+1} & \cdots & \mu_{1,n+d-r}\\
&& \\
\vdots && \vdots \\
\mu_{d,d+1}&\cdots&\mu_{d,n+d-r}
\end{array}
\end{array}
\right) .
\end{equation}
With the above system of coordinates, an affine
parametrization of
$Z_y\subset\GG(n-1,n+d-1)$ at $\Lambda_0$ is given by $p_{r,
\ldots , \hat i,
\ldots ,r+d-1,j}=\pm A_{i,j}+$quadratic terms, so that the
other Pl\"ucker coordinates are at least quadratic in the
parameters
$a_k,\mu_{l,m}$ of $Z$. Therefore an affine parameterization
of
$T_{\Lambda_{0}}(Z_y)\subset\PP{{n+d\choose d}-1}$ is
given by 
\begin{equation}\label{coordTZ}
\left\{\begin{array}{rll}p_{r, \ldots ,\hat i, \ldots
,r+d-1,j}=&\pm A_{i,j}&\cr
p_{i_1,\dots,i_d}=&0&{\rm otherwise}
\end{array}\right.\end{equation} 
with the same parameters $a_k,\mu_{l,m}$ as $Z_y$.

Therefore, the first part of \eqref{coordTZ}
shows that, if an element of $T_{\Lambda_{0}}(Z_y)$ belongs
also to $\GG(n-1,n+d-1)$, it should correspond to the linear
subspace defined by the matrix
\begin{equation}\label{B}B:=\left( \begin{array}{ccc}
\left.\begin{array}{ccccccc}
a_{r}&a_{r-1}&\cdots &\cdots & \cdots &a_2 & a_{1}\\
&a_{r}&a_{r-1} & && &a_{2}\\
&&\ddots &\ddots &&& \vdots \\
&&&a_{r}&a_{r-1}&\cdots&a_{d} 
\end{array}\right|
&
\left. \begin{array}{cccc}
1&0&\cdots &0 \\
0&1&&\\
\vdots &\ddots &\ddots &0\\
0 &\cdots  &0 &1
\end{array}\right|
&
\begin{array}{ccc}
\mu_{1,d+1} & \cdots & \mu_{1,n+d-r}\\
&& \\
\vdots && \vdots \\
\mu_{d,d+1}&\cdots&\mu_{d,n+d-r}
\end{array}
\end{array}
\right) .
\end{equation}
On the other hand, the second part of \eqref{coordTZ}
implies that the submatrix of $B$ obtained by removing the
central identity block has rank at most one. Hence
$a_r=\dots=a_2=0$, and depending on the vanishing
of $a_1$ or not, $B$ takes one of the following forms:

$$B_1=\left( \begin{array}{ccc}
0  &\cdots & 0\\
\vdots&&\vdots \\
0&\cdots&0
\end{array}\right|
\left.\begin{array}{ccc}
1& & 0\\
&\ddots& \\
0&&1
\end{array} \right|
\left. \begin{array}{ccc}
\mu_{1,d+1} & \cdots &\mu_{1,n+d-r} \\ 
\vdots &&\vdots \\
\mu_{d,d+1}&\cdots &\mu_{d,n+d-r}
\end{array}
\right) $$
with the last block of rank at most one, or

$$B_2=\left(  \begin{array}{cccc}
0 & \cdots &0&a_1 \\
\vdots &&\vdots& \\
0&\cdots &0&0
\end{array}\right|
\left.\begin{array}{ccc}
1& & 0\\
&\ddots& \\
0&&1\end{array} \right|
\left. \begin{array}{ccc}
\mu_{1,d+1} & \cdots &\mu_{1,n+d-r} \\ 
0 &\cdots&0 \\
&\cdots&\\
0&\cdots &0
\end{array}
\right).$$
Now observe that, reciprocally, the matrices of the type
$B_1$ and $B_2$ represent linear subspaces satisfying
the equations \eqref{coordTZ}, so that they are in 
$T_{\Lambda_{0}}(V)$. On the other hand, matrices of type
$B_1$ correspond to linear subspaces
$\Lambda\in
\GG (n-1,n+d-1)$ such that
$\Lambda
\supset O^{r-1}_{x}(\Sigma)$ and $\dim (\Lambda \cap
\Lambda_0)\geq n-2$, while matrices of type $B_2$ correspond
to linear subspaces
$\Lambda\in
\GG (n-1,n+d-1)$ such that
$O^{r-2}_{x}(\Sigma)\subset\Lambda\subset
<\Lambda_0,O^r_{x}(\Sigma)>$.
\end{proof}

With this result in mind, we can now compute the intersection of
$\GG(n-1,n+d-1)$ with the tangential variety to $V$. In the
statement, we will use the following notation, which we will
often repeat along the paper.

\begin{nota} {\rm
Given linear subspaces $A\subset B\in\PP{n+d-1}$ of respective
dimensions $n-2,n$, we will write
$F(A,B)$ to denote the pencil of subspaces
$\Lambda\in\GG(n-1,n+d-1)$ such that $A\subset\Lambda\subset B$.
}\end{nota}

\begin{theorem}\label{spazio tg alla veronese}
Let $\Lambda_{0}\in \GG (n-1,n+d-1)$ such that the intersection
between $\Lambda_{0}$ and $\Sigma$ in $\PP {n-1}$ is a
zero-dimensional scheme with support on $\{y_{1}, \ldots ,
y_{k}\}\subset\Sigma$ and degree $n$ such that each point
$y_{i}$ has multiplicity $r_{i}$ and $\sum_{i=1}^{k}r_{i}=n$
(obviously $1\leq k
\leq n$). Then

\begin{equation}\label{sp tg Veronese} T_{\Lambda_{0}}(V)\cap
\GG (n-1,n+d-1)=\bigcup_{i=1}^{k}
F(<O^{r_{1}-1}_{y_{1}}(\Sigma) ,\ldots
,O^{r_{i}-2}_{y_{i}}(\Sigma) ,\ldots ,
O^{r_{k}-1}_{y_{k}}(\Sigma)>, <O^{r_{i}}_{y_{i}}(\Sigma)
,\Lambda_{0} >).\end{equation}
\end{theorem}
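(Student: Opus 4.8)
The plan is to derive Theorem \ref{spazio tg alla veronese} from Theorem \ref{TLxZ} via the decomposition of the tangent space to $V$ given in Remark \ref{rami}. Recall that if $\Lambda_0\in V$ meets $\Sigma$ at $y_1,\dots,y_k$ with multiplicities $r_1,\dots,r_k$, then $T_{\Lambda_0}(V)=\bigcap_{i=1}^k T_{\Lambda_0}(Z_i)$, where $Z_i=Z_{y_i}$ is the branch defined in Definition \ref{Z}. So the first step is to intersect this identity with $\GG(n-1,n+d-1)$: since intersection with a fixed variety commutes with finite intersections, we get
$$T_{\Lambda_0}(V)\cap\GG(n-1,n+d-1)=\bigcap_{i=1}^k\bigl(T_{\Lambda_0}(Z_i)\cap\GG(n-1,n+d-1)\bigr),$$
and each factor on the right is described explicitly by Theorem \ref{TLxZ} as a union of two pieces, call them $A_i\cup B_i$, where $A_i$ is the set of $\Lambda\supset O^{r_i-1}_{y_i}(\Sigma)$ with $\dim(\Lambda\cap\Lambda_0)\ge n-2$, and $B_i$ is the set of $\Lambda$ with $O^{r_i-2}_{y_i}(\Sigma)\subset\Lambda\subset\langle\Lambda_0,O^{r_i}_{y_i}(\Sigma)\rangle$.

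Next I would expand the big intersection $\bigcap_i(A_i\cup B_i)$ distributively into a union over subsets: $\Lambda$ lies in it iff there is a partition of $\{1,\dots,k\}$ into indices where $\Lambda\in B_i$ and indices where $\Lambda\in A_i$. The claim to prove is that the only surviving terms are those where exactly one index $i$ is in the "$B$" role and all the others are in the "$A$" role — and for that single index the two conditions combine into membership in $F(\langle O^{r_1-1}_{y_1}(\Sigma),\dots,O^{r_i-2}_{y_i}(\Sigma),\dots,O^{r_k-1}_{y_k}(\Sigma)\rangle,\ \langle O^{r_i}_{y_i}(\Sigma),\Lambda_0\rangle)$. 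For the "all $A_i$" term, a dimension count shows it is empty when $k\ge2$: being in every $A_i$ forces $\Lambda\supset O^{r_1-1}_{y_1}(\Sigma)\oplus\cdots\oplus O^{r_k-1}_{y_k}(\Sigma)$, which has dimension $n-1$ (since $\sum(r_i-1+1)-1 = n-1$... more precisely $\sum r_i - k + (k-1)=n-1$), so $\Lambda$ equals that span, but then $\dim(\Lambda\cap\Lambda_0)=n-1\not\ge n-2$ fails to be the generic intersection — actually one must check $\Lambda=\Lambda_0$, contradicting the requirement implicit in Terracini that we look at the proper tangent locus; I'd verify directly that $\langle O^{r_1-1}_{y_1}(\Sigma),\dots,O^{r_k-1}_{y_k}(\Sigma)\rangle$ is a subspace of $\Lambda_0$ of dimension $n-1$, hence equals $\Lambda_0$, so this term contributes nothing new (it is just $\Lambda_0\in V$, already in each $F(\cdot,\cdot)$). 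For a term with two or more indices in the "$B$" role, say $i$ and $j$, the constraint $\Lambda\subset\langle\Lambda_0,O^{r_i}_{y_i}(\Sigma)\rangle\cap\langle\Lambda_0,O^{r_j}_{y_j}(\Sigma)\rangle$ collapses: since $O^{r_i}_{y_i}(\Sigma)$ meets $\Lambda_0$ exactly along $O^{r_i-1}_{y_i}(\Sigma)$ and the $y$'s are distinct, the intersection of the two $(n+1)$-dimensional spaces is just $\Lambda_0$, again forcing $\Lambda=\Lambda_0$. So genuinely only the "one $B$, rest $A$" terms matter.

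Finally, for the surviving term indexed by $i$, I would check that the combined conditions "$\Lambda\in A_j$ for $j\ne i$ and $\Lambda\in B_i$" are exactly the assertion $\Lambda\in F(P,Q)$ with $P=\langle O^{r_1-1}_{y_1}(\Sigma),\dots,\widehat{O^{r_i-1}_{y_i}},\dots,O^{r_k-1}_{y_k}(\Sigma),\ O^{r_i-2}_{y_i}(\Sigma)\rangle$ and $Q=\langle\Lambda_0,O^{r_i}_{y_i}(\Sigma)\rangle$. The containments $\Lambda\supset O^{r_j-1}_{y_j}(\Sigma)$ for $j\ne i$ plus $\Lambda\supset O^{r_i-2}_{y_i}(\Sigma)$ give $\Lambda\supset P$; one computes $\dim P=(\sum_{j\ne i}r_j)+(r_i-1)-1=n-2$, matching the hypothesis in the notation for $F(\cdot,\cdot)$. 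The upper bound $\Lambda\subset Q$ is immediate from $B_i$, and $\dim Q=n+1$. It remains to see that the conditions $\dim(\Lambda\cap\Lambda_0)\ge n-2$ for $j\ne i$ are automatically satisfied once $P\subset\Lambda\subset Q$ (because $P\subset\Lambda_0$ already gives $\dim(\Lambda\cap\Lambda_0)\ge\dim P=n-2$), so no extra constraint survives, and conversely any $\Lambda$ in the pencil $F(P,Q)$ satisfies all the $A_j$ and $B_i$ conditions. This yields $\bigcup_{i=1}^k F(P,Q)$, which is precisely \eqref{sp tg Veronese}.

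**The main obstacle** I anticipate is the bookkeeping in the distributive expansion: one must argue carefully, using the distinctness of the $y_i$ and the precise way $O^{r_i}_{y_i}(\Sigma)$ sits relative to $\Lambda_0$, that every mixed term with $\ge2$ indices playing the "$B$" role, and the single term with all indices playing the "$A$" role, degenerates to the point $\{\Lambda_0\}$ (which is harmlessly absorbed into each $F(\cdot,\cdot)$ since $\Lambda_0\in V$ lies in its own tangent space). The geometric input making this work is that distinct osculating spaces $O^{r_i}_{y_i}(\Sigma)$ of the rational normal curve are in "general position" relative to each other and to $\Lambda_0=\langle\sum r_i y_i\rangle$ — concretely, $\langle\Lambda_0,O^{r_i}_{y_i}(\Sigma)\rangle\cap\langle\Lambda_0,O^{r_j}_{y_j}(\Sigma)\rangle=\Lambda_0$ for $i\ne j$ — which follows from the standard fact that a divisor of degree $\le n+d-1$ on $\Sigma$ imposes independent conditions on hyperplanes of $\PP{n+d-1}$.
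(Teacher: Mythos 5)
Your proposal is correct and follows the paper's own proof essentially verbatim: intersect $T_{\Lambda_{0}}(V)=\bigcap_{i}T_{\Lambda_{0}}(Z_{i})$ (Remark \ref{rami}) with $\GG(n-1,n+d-1)$, apply Theorem \ref{TLxZ} to write each factor as ${\mathcal A}_{i}\cup{\mathcal B}_{i}$, and check that in the distributive expansion every term except the ``one ${\mathcal B}_{i}$, all other ${\mathcal A}_{j}$'' ones collapses to $\{\Lambda_{0}\}$, each surviving term being exactly the pencil $F(<O^{r_{1}-1}_{y_{1}}(\Sigma),\ldots,O^{r_{i}-2}_{y_{i}}(\Sigma),\ldots,O^{r_{k}-1}_{y_{k}}(\Sigma)>,<O^{r_{i}}_{y_{i}}(\Sigma),\Lambda_{0}>)$, which is precisely what the paper does. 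The only slips are cosmetic: $\dim<\Lambda_{0},O^{r_{i}}_{y_{i}}(\Sigma)>=n$ (not $n+1$), as the notation $F(A,B)$ requires, and the independence fact you invoke should be stated for divisors of degree up to $n+d$ on $\Sigma$ (so that it covers the degree $n+2$ divisor when $d=2$); neither affects the argument.
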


\begin{proof}
With the notation of Remark \ref{rami}, Theorem \ref{TLxZ} shows
that, for each $i=1, \ldots , k$:
$$T_{\Lambda_{0}}(Z_{i})\cap \GG (n-1,n+d-1)=$$ 
$$=\{\Lambda \in \GG (n-1,n+d-1) \; | \; \Lambda \supset
O^{r-1}_{y_{i}}(\Sigma), \; \dim (\Lambda \cap \Lambda_{0})\geq
n-2\}\cup$$ $$\cup \{ \Lambda \in \GG (n-1,n+d-1) \; | \;
O^{r-2}_{y_{i}}(\Sigma) \subset \Lambda \subset
<\Lambda_{0},O^{r}_{y_{i}}(\Sigma)>\}.$$ Let us call for
brevity ${\mathcal A}_{i}:=\{\Lambda \in \GG (n-1,n+d-1) \; | \; \Lambda
\supset O^{r-1}_{y_{i}}(\Sigma), \; \dim (\Lambda \cap
\Lambda_{0})\geq n-2\}$ and ${\mathcal B}_{i}:= \{ \Lambda \in \GG
(n-1,n+d-1) \; | \; O^{r-2}_{y_{i}}(\Sigma) \subset \Lambda
\subset <\Lambda_{0},O^{r}_{y_{i}}(\Sigma)>\}$. By Remark
\ref{rami} we have that
$$T_{\Lambda_{0}}(V)\cap \GG (n-1,n+d-1)=\left(\bigcap_{i=1}^{k}T_{\Lambda_{0}}(Z_{i})\right)\cap \GG (n-1,n+d-1).$$
Then 
$$T_{\Lambda_{0}}(V)\cap \GG (n-1,n+d-1)=\bigcap_{i=1}^{k}{\mathcal A}_{i}\cup {\mathcal B}_{i}$$
Now it is sufficient to observe that all these intersections are equal to $\Lambda_{0}$ except for ${\mathcal A}_{1}\cap \cdots \cap \hat{{\mathcal A}}_{i}\cap \cdots \cap {\mathcal A_{k}}\cap {\mathcal B}_{i}$, for all $i=1, \ldots , k$,
that is $\{\Lambda\in  \GG (n-1,n+d-1) \; | \;
<O^{r_{1}-1}_{y_{1}}(\Sigma) ,\ldots
,O^{r_{i-1}-1}_{y_{i-1}}(\Sigma),O^{r_{i}-2}_{y_{i}}(\Sigma)
,O^{r_{i+1}-1}_{y_{i+1}}(\Sigma), \ldots ,
O^{r_{k}-1}_{y_{k}}(\Sigma)> \, \subset \Lambda \subset \,
<O^{r_{i}}_{y_{i}}(\Sigma) ,\Lambda_{0} >\}$ from which we have
the statement.
\end{proof}

\begin{rem} \rm{Observe that if  $\sharp\{y_{1}, \ldots
,y_{k}\}=\deg(\Lambda_{0}\cap \Sigma)=n$ then (\ref{sp tg
Veronese}) becomes:
$$T_{\Lambda_{0}}(V)\cap \GG
(n-1,n+d-1)=\bigcup_{i=1}^{n}F(<y_{1}, \ldots , \hat{y}_{i},
\ldots ,y_{n}>, <y_{1}, \ldots , l_{i}, \ldots , y_{n}>)$$
where $l_{i}=T_{y_{i}}(\Sigma)$.
\\
On the other hand, if ${\rm length}(\Lambda_{0}\cap \Sigma)=n$
and $y_{1}=\cdots =y_{k}$ then (\ref{sp tg Veronese}) becomes:
$$T_{\Lambda_{0}}(V)\cap \GG
(n-1,n+d-1)=F(O^{n-2}_{y_{1}}(\Sigma),
O^{n}_{y_{1}}(\Sigma)).$$ }
\end{rem}
\begin{defi} \rm{Let $X\subset \PP N$ be a projective, reduced and irreducible variety. 
Let $X_{0}\subset X$ be the dense subset of regular points of $X$. We define the tangential variety to $X$ as
$$\tau(X):=\overline{\bigcup_{P\in X_{0}}T_{P}(X)}.$$}
\end{defi}

\begin{corol}\label{tau and grass} The intersection between
tangential variety to Veronese variety $V=\nu_{d}(\PP n)$ and
the Grassmannian $\GG (n-1, n+d-1)$ is
$$\tau(V)\cap \GG (n-1,n+d-1)=$$
\begin{equation}\label{tangential and
grass}=\bigcup_{\Lambda=<r_1y_1,\dots,r_ky_k>\in V}\left(
\bigcup_{i=1}^k
F(<O^{r_{1}-1}_{y_{1}}(\Sigma) ,\ldots
,O^{r_{i}-2}_{y_{i}}(\Sigma) ,\ldots ,
O^{r_{k}-1}_{y_{k}}(\Sigma)>, <O^{r_{i}}_{y_{i}}(\Sigma)
,\Lambda>)\right).\end{equation} 
\qed 
\end{corol}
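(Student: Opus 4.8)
The plan is to obtain Corollary \ref{tau and grass} as an immediate consequence of Theorem \ref{spazio tg alla veronese} together with the definition of the tangential variety, so this is essentially a matter of taking a union over all points of $V$. First I would recall that, by the definition of $\tau(V)$, one has $\tau(V)=\overline{\bigcup_{\Lambda_{0}\in V}T_{\Lambda_{0}}(V)}$, where the union is over the dense set of smooth points of $V$; but $V$ is itself a smooth variety (being a Veronese variety), so in fact every point of $V$ is a regular point and the union runs over all of $V$. Parametrizing a point of $V$ by the corresponding $n$-secant space $\Lambda_{0}=<r_1y_1,\dots,r_ky_k>$ to $\Sigma$ (with $\sum r_i=n$), Theorem \ref{spazio tg alla veronese} computes each $T_{\Lambda_{0}}(V)\cap\GG(n-1,n+d-1)$ precisely as the right-hand side of \eqref{tangential and grass} restricted to that single $\Lambda_{0}$.

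The only genuinely substantive point to address is the interchange of the closure with the intersection, i.e. justifying that
$$\tau(V)\cap\GG(n-1,n+d-1)=\overline{\bigcup_{\Lambda_0\in V}\big(T_{\Lambda_0}(V)\cap\GG(n-1,n+d-1)\big)}$$
and that this closure does not add new points. For the first equality, the inclusion $\supseteq$ is clear, while for $\subseteq$ one uses that $\GG(n-1,n+d-1)$ is closed, so intersecting with it commutes with taking closure of the union of the affine tangent spaces. For the claim that no extra points appear upon taking closure, I would argue that the indexing set — pairs consisting of a secant space $\Lambda_0\in V$ and a subspace $\Lambda$ lying in the pencil $F(\cdot,\cdot)$ attached to it — is itself (the image of) a projective, hence complete, incidence variety mapping to $\GG(n-1,n+d-1)$; its image is therefore already closed, so the overbar on the right-hand side of \eqref{tangential and grass} is harmless (or may simply be read as part of the definition of $\tau$). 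Concretely, one can realize this incidence set as a closed subvariety of $V\times\GG(n-1,n+d-1)$ cut out by the incidence conditions described via the osculating flags $O^{j}_{y_i}(\Sigma)$, which vary algebraically with the $y_i$.

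With those two observations in place, the proof is just: apply Theorem \ref{spazio tg alla veronese} pointwise, take the union over $\Lambda_0\in V$, and push the (superfluous) closure through. I expect the main obstacle to be purely expository rather than mathematical — namely making the ``union over $\Lambda_0\in V$'' rigorous as an algebraic family and being careful that the closure in the definition of $\tau(V)$ matches the closure implicit in the union of the $F(\cdot,\cdot)$'s, so that the equality is an honest equality of varieties and not merely of their dense parts. Once the family is set up, the verification is formal and the statement follows, so the proof can legitimately be given in one line with a \qed, as in the excerpt.
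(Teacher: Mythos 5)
Your proposal is correct and takes essentially the same route as the paper, which states the corollary with an immediate \qed as a direct consequence of Theorem \ref{spazio tg alla veronese}: intersect each tangent space with the Grassmannian and take the union over $\Lambda_0\in V$. Your extra remark that the closure in the definition of $\tau(V)$ adds nothing (since the union of tangent spaces is the image of a complete incidence variety over the projective variety $V$, hence already closed) is a sound way to make the paper's implicit step explicit.
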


Observe that, when $d=2$, we have $\tau(V)=\Sec_1(V)=\Split_2(\PP
n)$, so that the above corollary also gives the intersection of
$\GG (n-1,n+1)$ with $\Sec_1(V)$ and $\Split_2(\PP n)$.

Since elements of the tangent space to $V$ at $[L^d]$ take the
form $[L^{d-1}M]$, one can wonder whether it is possible to
give some information about the linear form $M$. We conclude
this section answering that question.

\begin{propos}\label{grazie} Let $[L_0^d]\in V$ be an element
corresponding to an $n$-secant subspace
$\Lambda_0\subset\PP{n+d-1}$ to
$\Sigma$. Then, if $\Lambda\in T_{\Lambda_0}(V)\cap\GG(n-1,n+d-1)$
is given by
$[L_0^{d-1}L_1]$, the point $[L_1^d]\in V$ corresponds to a
linear space
$\Lambda_1\subset\PP{n+d-1}$ sharing with $\Lambda_0$ a
subscheme of $\Sigma$ of length $n-1$.
\end{propos}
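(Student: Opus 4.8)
The plan is to use Proposition \ref{grazie}'s hypothesis together with Theorem \ref{spazio tg alla veronese}, which gives an explicit description of $T_{\Lambda_0}(V)\cap\GG(n-1,n+d-1)$ as a union of pencils $F(A_i,B_i)$, one for each point $y_i$ in the support of $\Lambda_0\cap\Sigma$. So the first step is: since $\Lambda\in T_{\Lambda_0}(V)\cap\GG(n-1,n+d-1)$, there is an index $i$ such that $\Lambda$ lies in the pencil
$$F(<O^{r_{1}-1}_{y_{1}}(\Sigma) ,\ldots
,O^{r_{i}-2}_{y_{i}}(\Sigma) ,\ldots ,
O^{r_{k}-1}_{y_{k}}(\Sigma)>, <O^{r_{i}}_{y_{i}}(\Sigma)
,\Lambda_{0} >).$$
In particular $\Lambda$ contains the $(n-2)$-dimensional space $A_i:=<O^{r_{1}-1}_{y_{1}}(\Sigma),\ldots,O^{r_{i}-2}_{y_{i}}(\Sigma),\ldots,O^{r_{k}-1}_{y_{k}}(\Sigma)>$, which is spanned by a subscheme $W\subset\Sigma$ of length $(r_1-1)+\dots+(r_i-1)+\dots+(r_k-1)+1 = n-k+1$... so I must be careful: $A_i$ is spanned by the subscheme $W=(r_1-1)y_1+\dots+(r_i-2)y_i+\dots+(r_k-1)y_k$, of length $n-k-1$ if $k\ge 2$, and I should rather work with $W':=(r_1-1)y_1+\dots+(r_i-1)y_i+\dots+(r_k-1)y_k$ (length $n-k$) or with the full $(n-1)$-length scheme inside $\Lambda_0$. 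The cleanest route: $\Lambda$ and $\Lambda_0$ both contain $A_i=<W>$, and $W$ is a subscheme of $\Sigma$ of length $n-k-1$; but this is typically shorter than $n-1$, so $A_i\cap\Sigma$ alone is not enough and I need to extract more.

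The key additional step is to pass to the polynomial side. Write $\Lambda=[L_0^{d-1}L_1]$; if $L_1$ is proportional to $L_0$ then $\Lambda=\Lambda_0$ and the statement is trivial (they share the length-$n$ scheme $\Lambda_0\cap\Sigma$), so assume $L_0,L_1$ independent. By Remark \ref{Split-osc} (for $d\ge 3$) or directly (for $d=2$), $[L_0^{d-1}L_1]\in\Split_d(\PP n)$. Now I would apply Proposition \ref{punti in osculanti} with $k=1$: since $\Lambda\in O^1_{\Lambda_0}(V)$ and $1<d$, the $n$-secant space $\Lambda_0$ contains the schematic intersection $\Lambda\cap\Sigma=:Z$. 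Dually, I apply Proposition \ref{punti in osculanti} again with the roles of $L_0$ and $L_1$ interchanged: the element $[L_1^{d-1}L_0]$ — here is a subtlety, for $d=2$ this equals $[L_0 L_1]=[L_0^{d-1}L_1]$ so it is the same point $\Lambda$ and lies in $O^1_{\Lambda_1}(V)$, hence $\Lambda_1$ contains $\Lambda\cap\Sigma=Z$ as well; for $d\ge 3$ this requires instead observing that $\Lambda$ and $\Lambda_1$ both contain $Z$ because $\Lambda\in O^{1}_{\Lambda_1}(V)$ fails in general, so I will instead use the following: $Z=\Lambda\cap\Sigma$ has length $n-1$ (the generic/maximal case, which I must justify — see below), it lies in $\Lambda_0$ by the first application, and it also lies in $\Lambda_1$ because the form $L_1^d$ vanishes on $Z$ whenever $L_0^{d-1}L_1$ does and $L_0$ does not vanish on all of $Z$. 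This last point is where I translate "$Z\subset\Lambda$" into "$L_0^{d-1}L_1$ vanishes on $Z$ as a subscheme of $\Sigma\cong\PP1$" and then deduce $L_1$ vanishes on $Z$, hence $L_1^d$ does, hence $\Lambda_1\supseteq Z$.

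So the real content reduces to two things: (a) showing $\mathrm{length}(\Lambda\cap\Sigma)=n-1$ for $\Lambda$ in the pencil $F(A_i,B_i)$ (a generic $\Lambda$ in this pencil meets $\Sigma$ in the length-$(n-k-1)$ scheme $W$ plus, coming from the osculating data at $y_i$, one further unit, getting to $n-1$ — I would verify this by the coordinate description in the proof of Theorem \ref{TLxZ}, matrices of type $B_1$ and $B_2$, reading off the intersection with $\Sigma$), and (b) the algebraic passage from $\Lambda\supseteq Z$ to $\Lambda_1\supseteq Z$ via divisibility of binary forms. I expect step (a) — pinning down that $\Lambda\cap\Sigma$ has length exactly $n-1$ and identifying $n-1$ of its points inside $\Lambda_0$ — to be the main obstacle, since the description in Theorem \ref{spazio tg alla veronese} is given component-by-component and one has to track how the osculating spaces $O^{r_i-2}_{y_i}(\Sigma)$ and $O^{r_i}_{y_i}(\Sigma)$ interact with a general member of the pencil; once that length-$(n-1)$ scheme $Z$ is in hand and shown to lie in $\Lambda_0$, step (b) is a short argument with $p_0(t_0,t_1)\mid$-type divisibility in $K[t_0,t_1]$ together with Lemma \ref{contained}, and the conclusion that $\Lambda_1$ shares with $\Lambda_0$ the length-$(n-1)$ subscheme $Z$ of $\Sigma$ follows immediately.
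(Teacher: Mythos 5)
Your overall strategy (read $\Lambda$ off from the pencil description of $T_{\Lambda_0}(V)\cap\GG(n-1,n+d-1)$, produce a length-$(n-1)$ subscheme $Z\subset\Sigma$ common to $\Lambda$ and $\Lambda_0$, then transfer $Z$ to $\Lambda_1$) is the right one and is essentially the paper's. But your step (a) --- which you yourself flag as the main obstacle and leave unfinished --- rests on a miscount of the schemes spanning the osculating spaces, and that miscount is precisely what makes the easy step look hard. The $m$-th osculating space $O^m_y(\Sigma)$ is the span of the length-$(m+1)$ scheme $(m+1)y$, not of $my$ (check this against the remark after Theorem \ref{spazio tg alla veronese}: when all $r_i=1$ one gets $O^{r_j-1}_{y_j}(\Sigma)=y_j$ and $O^{r_i-2}_{y_i}(\Sigma)=\emptyset$). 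Hence the base $A_i=<O^{r_1-1}_{y_1}(\Sigma),\ldots,O^{r_i-2}_{y_i}(\Sigma),\ldots,O^{r_k-1}_{y_k}(\Sigma)>$ of the pencil is the span of $Z:=r_1y_1+\cdots+(r_i-1)y_i+\cdots+r_ky_k$, which has length $r_1+\cdots+r_k-1=n-1$, not $n-k-1$. Since every $\Lambda$ in the pencil contains $A_i=<Z>$, and $\Lambda_0=<r_1y_1,\ldots,r_ky_k>$ also contains $Z$, the common length-$(n-1)$ subscheme is handed to you directly by Theorem \ref{spazio tg alla veronese}; there is nothing to extract from the matrices $B_1,B_2$ of Theorem \ref{TLxZ}, no genericity in the pencil to invoke, and no need to determine the exact length of $\Lambda\cap\Sigma$.

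For step (b), your divisibility argument can be made to work, but it amounts to re-deriving Proposition \ref{punti in osculanti} by hand. You dismissed applying that proposition to $\Lambda_1$ on the grounds that ``$\Lambda\in O^1_{\Lambda_1}(V)$ fails in general,'' but the proposition is stated for any order $k<d$: by Remark \ref{Split-osc}, $[L_0^{d-1}L_1]=[L_1^{d-(d-1)}\cdot L_0^{d-1}]$ lies in $O^{d-1}_{[L_1^d]}(V)$, and $d-1<d$, so Proposition \ref{punti in osculanti} applied with $k=d-1$ gives at once that $\Lambda_1$ contains $\Lambda\cap\Sigma\supseteq Z$. That two-line citation replaces your discussion of $[L_1^{d-1}L_0]$, the $d=2$ special case, and the ``$L_1$ vanishes on $Z$'' translation --- which, as written, conflates vanishing of a degree-$d$ form on $Z\subset\PP{n+d-1}$ with what you actually need, namely membership of $L_1$ in $K[N_0,\ldots,N_{n-s}]_1$ via Lemma \ref{contained}.
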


\begin{proof} By Theorem \ref{spazio tg alla veronese}, we have
that $\Lambda$ shares with $\Lambda_0$ a subscheme
$Z\subset\Sigma$ of length $n-1$. On the other hand, the fact
that $\Lambda$ corresponds to $L_0^{d-1}L_1$ implies (see Remark
\ref{Split-osc}) that $\Lambda\in O_{[L_1^d]}^{d-1}(V)$. Hence,
by Proposition \ref{punti in osculanti}, it follows that
$\Lambda_1$ contains $Z$, as wanted.
\end{proof}

\section{Second osculating space to the Veronese Variety and the
Grassmannian}\label{osculating}

We devote this section to study the intersection with the
Grassmanniann of the second osculating space to the Veronese
variety. As we have seen, in the case of the first osculating
space (i.e. the tangential variety), the computations were
difficult to manage. In fact, the case of the second osculating
space is maybe the last handy case with these techniques,
although only the case $d=3$ seems to be treatable.

\begin{theorem}\label{O2Zramo} Let $\Lambda_0\in \GG (n-1,n+2)$
such that the intersection $\Lambda_0\cap \Sigma\subset \PP {n+2}$ is
a zero-dimensional scheme whose support contains $x\in \Sigma$ with
multiplicity $r$. Let $Z_y$ be as in Definition \ref{Z} with
$d=3$. Then the intersection between the second osculating
space to $Z_y$ in
$\Lambda_0$ and the Grassmannian $\GG (n-1,n+2)$ satisfies 
$$O^{2}_{\Lambda_0}(Z_y)\cap \GG (n-1,n+2)\subseteq {\mathcal
A}\cup {\mathcal B} \cup {\mathcal C}$$ where 
$${\mathcal A}=\{\Lambda \in \GG (n-1,n+2) \; | \; \Lambda\subseteq
<\Lambda_{0}, O^{r+1}_{x}(\Sigma)>, \, \dim (\Lambda \cap
O^{r-1}_{x}(\Sigma))\geq r-2\}$$
$${\mathcal B}=\{\Lambda \in \GG (n-1,n+2) \; |
\;O^{r-2}_{x}(\Sigma)\subseteq\Lambda, \dim (\Lambda \cap
O^{r}_{x}(\Sigma))\geq r-1, \, \dim (\Lambda \cap
<\Lambda_{0},O^{r}_{x}(\Sigma)>)\geq n-2\}$$
$${\mathcal C}= \{\Lambda \in \GG (n-1,n+2) \; | \;
O^{r-1}_{x}(\Sigma)\subseteq\Lambda , \, \dim (\Lambda \cap
\Lambda_{0})\geq n-3\}.$$
\end{theorem}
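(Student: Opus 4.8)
The strategy mirrors the proof of Theorem~\ref{TLxZ}, simply carried one order of osculation further. As there, I would parametrize a neighborhood of $Z_y$ inside $\GG(n-1,n+2)$ near $\Lambda_0$ using the matrix $A$ of \eqref{A} (now with $d=3$), normalizing so that $y=[1,0,\dots,0]$, $\Lambda_0$ is cut out by $z_r=z_{r+1}=z_{r+2}=0$, and we work in the Pl\"ucker affine chart $\{p_{r,r+1,r+2}\ne0\}$. In that chart the Pl\"ucker coordinates $p_{r,\dots,\hat\imath,\dots,r+2,j}$ are the linear (degree-one) entries of the coefficient matrix, the coordinates $p$ with two indices moved out of $\{r,r+1,r+2\}$ are quadratic in the parameters $a_k,\mu_{l,m}$, and those with all three indices moved are cubic. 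The affine second osculating space $O^2_{\Lambda_0}(Z_y)$ is then obtained by truncating the parametrization of $Z_y$ at order two: it is the span of $\Lambda_0$ together with all first and second partial derivatives, and concretely it is given by keeping the degree-$\le2$ part of each Pl\"ucker coordinate, i.e. killing the cubic (three-indices-moved) coordinates but allowing both the linear and the quadratic ones to be arbitrary (within the span of the relevant derivatives).

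The next step is to impose that a point of $O^2_{\Lambda_0}(Z_y)$ also lie on $\GG(n-1,n+2)$. So I would write a generic element of the affine $O^2_{\Lambda_0}(Z_y)$, read off its ten (for $d=3$, $\binom{n+3}{3}$ in general but only these are nonzero) families of Pl\"ucker coordinates in terms of the truncation parameters, and then impose the Pl\"ucker relations, exactly as the quadratic-vanishing half of \eqref{coordTZ} was used in Theorem~\ref{TLxZ}. The vanishing of the three-indices-moved coordinates, together with the Pl\"ucker (sub)relations among the surviving two-indices-moved coordinates and the linear ones, forces the matrix of coefficients to have a constrained shape: as in the earlier proof the leading entries $a_r,\dots,a_2$ and some of the $\mu$'s must vanish or collapse the relevant submatrices to low rank. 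Translating each resulting matrix shape back into incidence conditions on the subspace $\Lambda$ it defines — "contains $O^{r-1}_x(\Sigma)$", "is contained in $\langle\Lambda_0,O^{r+1}_x(\Sigma)\rangle$", "meets $O^{r}_x(\Sigma)$ in dimension $\ge r-1$", and the various $\dim(\Lambda\cap\Lambda_0)\ge n-2,\ n-3$ conditions — yields precisely the three families $\mathcal A$, $\mathcal B$, $\mathcal C$. Since we only claim an inclusion $\subseteq$, I do not need the converse (that every such $\Lambda$ actually lies in $O^2_{\Lambda_0}(Z_y)$), which simplifies matters compared to Theorem~\ref{TLxZ}.

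The main obstacle will be the combinatorial bookkeeping of the quadratic terms. In Theorem~\ref{TLxZ} the tangent space only saw the linear part of the Pl\"ucker coordinates, so the matrix $B$ was immediate; here the second osculating space genuinely involves the quadratic coordinates $p_{\dots}$, and one must compute which quadratic monomials in $a_k,\mu_{l,m}$ actually occur as second derivatives of the parametrization of $Z_y$ and how the Pl\"ucker relations couple them to the linear coordinates. The delicate point is that the surviving quadratic coordinates are not arbitrary — they lie in the image of the Hessian of the Pl\"ucker map restricted to $Z_y$ — and getting the exact list of constraints is what produces the three somewhat intricate incidence conditions rather than a cleaner answer. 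I would organize this by splitting into cases according to whether the top coefficients $a_r,a_{r-1},\dots$ vanish (as in the $B_1$ versus $B_2$ dichotomy of Theorem~\ref{TLxZ}, but now with a three-way rather than two-way split), handling each case's rank conditions separately, and only at the end checking that the three cases assemble into $\mathcal A\cup\mathcal B\cup\mathcal C$ with no omissions.
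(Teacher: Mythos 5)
Your plan is essentially the paper's own proof: same normalization ($x=[1,0,\dots,0]$, $\Lambda_0: z_r=z_{r+1}=z_{r+2}=0$, chart $p_{r,r+1,r+2}\ne0$), same parametrization of $Z_y$ by the matrix $A$ of \eqref{A} kept to order two with each quadratic monomial promoted to an independent parameter, same use of the induced linear relations among the quadratic Pl\"ucker coordinates to force the matrix $B$ into a few rank-constrained shapes, and the same case split on $r\ge3$, $r=2$, $r=1$ and on the vanishing of the top coefficients before translating each shape into the incidence conditions defining $\mathcal A$, $\mathcal B$, $\mathcal C$. What remains to turn the plan into a proof is exactly the long explicit list of identities among the $p_{i,j,k}$ that the paper writes out, which is where all the actual work lies.
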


\begin{proof} As in Theorem \ref{TLxZ} we give a parameterization of
$\Sigma$ around the point $x:=[1,0,\ldots ,0]$, and we give the
descripition of $Z_y$ via the system (\ref{system H}), that, in
this case for $d=3$, becomes
$$\left\{ \begin{array}{l}
H_{1}+\mu_{1,4}H_{4}+ \cdots +\mu_{1,n+3-r}H_{n+3-r}=0\\
H_{2}+\mu_{2,4}H_{4}+ \cdots +\mu_{2,n+3-r}H_{n+3-r}=0\\
H_{3}+\mu_{3,4}H_{4}+ \cdots +\mu_{3,n+3-r}H_{n+3-r}=0
\end{array}\right.$$

Next we have to consider the matrix $A$ defined
in \eqref{A}, but now we have to keep the terms of degree two.
Depending on whether $r\ge3$ or $r=1,2$ the form of the matrix is
different, so that we will distinguish the three cases.

{\bf CASE $r\ge 3$:} In this case the matrix $A$ takes the form:

\begin{equation}\label{A again}A=\left(\begin{array}{ccc}
a_{r}&a_{r-1} &a_{r-2}\\
0 & a_{r} & a_{r-1}\\
0&0&a_{r}\end{array}\right.
\left|\begin{array}{ccc}
a_{r-3}+\mu_{1,4}a_{r} &\cdots&a_{1}+\sum_{i=4}^{r}\mu_{1,i}a_{i}\\
a_{r-2}+\mu_{2,4}a_{r} &\cdots&a_{2}+\sum_{i=4}^{r}\mu_{2,i}a_{i}\\
a_{r-1}+\mu_{3,4}a_{r} &\cdots&a_{3}+\sum_{i=4}^{r}\mu_{3,i}a_{i}\end{array}\right| \end{equation}
$$
\left|\begin{array}{ccc}
1+\sum_{i=3}^{r}\mu_{1,i+1}a_{i}&0+\sum_{i=2}^r\mu_{1,i+2}a_i&0+\sum_{i=1}^{r}\mu_{1,i+3}a_{i}\\
a_{1}+\sum_{i=3}^{r}\mu_{2,i+1}a_{i}&1+\sum_{i=2}^r\mu_{2,i+2}a_i&0+\sum_{i=1}^{r}\mu_{2,i+3}a_{i}\\
a_{2}+\sum_{i=3}^{r}\mu_{3,i+1}a_{i}&a_{1}+\sum_{i=2}^r\mu_{3,i+2}a_i&1+\sum_{i=1}^{r}\mu_{3,i+3}a_{i}
\end{array}\right.
\left|\begin{array}{ccc}
\mu_{1,4}+\sum_{i=1}^{n-r-1}\mu_{1,i+4}a_{i}&\cdots&\mu_{1,n+3-r}\\
\mu_{2,4}+\sum_{i=1}^{n-r-1}\mu_{2,i+4}a_{i}&\cdots&\mu_{2,n+3-r}\\
\mu_{3,4}+\sum_{i=1}^{n-r-1}\mu_{3,i+4}a_{i}&\cdots&\mu_{3,n+3-r}
\end{array}\right).$$
(We apologize with the reader but the matrix $A$ is too big
to be written on only one line: it is a $(3\times (n+3-r))$
size and we write first the firsts $r$ columns and secondly
the others.)

{}From this matrix, and proceeding as in the proof of Theorem
\ref{TLxZ}, one could get an affine parametrization of
$O^{2}_{\Lambda_{x}}(V)\subset\PP{{n+3\choose3}-1}$ in the
affine open set $p_{r,r+1,r+2}\ne0$. However, such a
parameterization becomes too complicated, so that we just write
the part that we need to get the result:

\begin{itemize} 
\item $p_{j,r+1,r+2}=\left\{ \begin{array}{ll}
a_{r-j}, & j=0,1,2 \\
a_{r-j}+\overline{a_{r-j+3}\mu_{1,4}}+\cdots
+\overline{a_r\mu_{1,j+1}}, & j=3,\dots , r-1 \\
\mu_{1,j-r+1}+\overline{a_1\mu_{1,j-r+2}}+\cdots
+\overline{a_{n-j+2}\mu_{1,n-r+3}}, & j=r+3, \dots , n+2, 
 \end{array} \right.$
\item $-p_{j,r,r+2}=\left\{\begin{array}{ll}  
-\overline{a_ra_1}, & j=0 \\
-\overline{a_{r-j}a_1}+a_{r-j+1}, & j=1,2 \\
-\overline{a_{r-j}a_1}+a_{r-j+1}+\overline{a_{r-j+3}\mu_{2,4}}+\cdots
+\overline{a_r\mu_{2,j+1}}, & j=3, \dots , r-1 \\
-\overline{a_1\mu_{1,j-r+1}}+\mu_{2,j-r+1}+\overline{a_1\mu_{2,j-r+2}}+\cdots
+\overline{a_{n-j+2}\mu_{2,n-r+3}}, & j=r+3, \dots , n+2,
\end{array}\right.$
\item $p_{j,r,r+1}=\left\{\begin{array}{ll}
-\overline{a_ra_2}, & j=0 \\
-\overline{a_{r-1}a_2}-\overline{a_ra_1}, & j=1 \\
-\overline{a_{r-2}a_2}-\overline{a_{r-1}a_1}+a_r, & j=2 \\
-\overline{a_{r-j}a_2}-\overline{a_{r-j+1}a_1}+a_{r-j+2}+\overline{a_{r-j+3}\mu_{3,4}}+\cdots
+\overline{a_r\mu_{3,j+1}}, & j=3, \dots , r-1 \\
-\overline{a_2\mu_{1,j-r+1}}-\overline{a_1\mu_{2,j-r+1}}+\mu_{3,j-r+1}+
\overline{a_1\mu_{3,j-r+2}}+\cdots +
\overline{a_{n-j+2}\mu_{3,n-r+3}}, & j=r+3,
\dots , n+2.
\end{array}\right.$
\item $p_{0,r-1,r+2}=\overline{a_{r}a_{2}}=-p_{0,r,r+1}$;
\item $p_{1,r-1,r+2}=\overline{a_{r-1}a_{2}}-\overline{a_{r}a_{1}}=-p_{1,r,r+1}-2p_{0,r,r+2}$;
\item $p_{2,r-1,r+2}=\overline{a_{r-2}a_{2}}-\overline{a_{r-1}a_{1}}=-p_{2,r,r+1}-2p_{1,r,r+2}-p_{0,r+1,r+2}$;
\item $p_{0,1,r+1}=0$;
\item $p_{0,i,r+1}=-\overline{a_{r}a_{r-i+2}}=-p_{0,i-1,r+2}$, for $i=3, \ldots , r-1$;
\item $p_{1,2,r+1}=-\overline{a_{r}a_{r-1}}=-p_{0,2,r+2}$;
\item $p_{1,i,r+1}=-\overline{a_{r-1}a_{r-i+2}}=-p_{1,i-1,r+2}+p_{0,i+1,r+1}=-p_{1,i-1,r+2}-p_{0,i-1,r+2}$, for $i=3, \ldots , r-1$;
\item $p_{2,3,r+1}=-\overline{a_{r-2}a_{r-1}}+ \overline{a_{r}a_{r-3}}=-p_{1,3,r+2}$;
\item $p_{2,i,r+1}=-\overline{a_{r-2}a_{r-i+2}}+ \overline{a_{r}a_{r-i}}=-p_{2,i-1,r+2}+p_{1,i+1,r+1}-p_{0,i+2,r+1}$, for $i=4, \ldots, r-1$;
\item $p_{0,i,r}=0$ for all $i=1, \ldots , n+3-r$;
\item $p_{1,i,r}=p_{0,i,r+1}=p_{0,i-1,r+2}$ for $i=2, \ldots , r-1$;
\item $p_{2,i,r}=p_{1,i-1,r+2}$ for $i=3,\ldots , r+2$;
\item $p_{i,j,r+2}=p_{i+1,j+1,r}$ for $i=0, \ldots ,r-3$ and $j=1, \ldots ,r-2$;
\item $p_{r-i,r-2,r+2}-p_{r-i,r-1,r+1}=\overline{a_{i+2}a_{1}}-\overline{a_{i+1}a_{2}}=-p_{r-i-1,r-1,r+2}$, for $i=3, \ldots , r-1$.
\end{itemize}
where the bars denote new parameters corresponding to terms
of degree two in the parametrization of $Z_y$.

Now it is needless to say that applying all these relations
at the matrix $B$ (defined as in the proof of Theorem \ref{TLxZ}) is a
complete mess... At the end of the game we succeed with a matrix that
can be only of one of the following forms:
$$B'=\left( \begin{array}{ccc}
0&0&0\\
0&0&0\\
0&0&0 \end{array}\right.
\left| \begin{array}{ccccc} 
0&\cdots&0&*&*\\
0&\cdots&0&0&0\\
0&\cdots&0&0&0\end{array}\right.
\left| \begin{array}{ccc}
1&0&0\\
0&1&0\\
0&0&1\end{array}\right.
\left|\begin{array}{ccc}
*&\cdots &*\\
*&\cdots &*\\
*&\cdots &*
\end{array}\right)$$
with the condition that the rank of the submatrix obtained omitting the third block is $2$;\\
or
$$B''=\left( \begin{array}{ccc}
0&*&*\\
0&*&*\\
0&0&0 \end{array}\right.
\left| \begin{array}{ccc}
*&\cdots&*\\
*&\cdots&*\\
0&\cdots&0\end{array}\right.
\left| \begin{array}{ccc}
1&0&0\\
0&1&0\\
0&0&1\end{array}\right.
\left|\begin{array}{ccc}
*&\cdots &*\\
*&\cdots &*\\
*&\cdots &*
\end{array}\right)$$
with the conditions that at least one of the element of the second row in the firsts two blocks is different from zero, the submatrix maid by the first two blocks has rank $1$ and that that one obtained by omitting the third block has rank $2$.
\begin{description}
\item[$B'$ case:] Observe that $p_{r-2,r+1,r+2+i}=\overline{a_{2}}\cdot B_{3,r+2+i}$ for $i=1, \ldots ,n+1$. From the parameterization we get:
\begin{enumerate}
\item $p_{r-2,r+1,r+2+i}=\overline{a_{2}\mu_{3,r+2+i}}-\overline{a_{4}\mu_{1,r+2+i}}$, for $i=1, \ldots , n+1$; 
\item $p_{r-1,r,r+2+i}=\overline{a_{2}\mu_{3,r+2+i}}-\overline{a_{3}\mu_{2,r+2+i}}$ for $i=1, \ldots , n+1$; since it is equal to that we know from the description of $B'$ that is zero;
\item $p_{r-3,r+2,r+2+i}=\overline{a_{3}\mu_{2,r+2+i}}-\overline{a_{4}\mu_{1,r+2+i}}$ for $i=1, \ldots , n+1$ that we know from the description of $B'$ that is zero;
\end{enumerate}
hence $p_{r-2,r+1,r+2+i}=0$ for $i=1, \ldots , n+1$. Therefore or $\overline{a_{2}}=0$ or $B_{3,r+2+i}=0$ for all $i=1, \ldots , n+1$. Then we get the following three subcases:
$$B'_{I}:=\left( \begin{array}{ccc}
0&0&0\\
0&0&0\\
0&0&0 \end{array}\right.
\left| \begin{array}{ccccc}
0&\cdots&0&a_{2}&a_{1}\\
0&\cdots&0&0&0\\
0&\cdots&0&0&0\end{array}\right.
\left| \begin{array}{ccc}
1&0&0\\
0&1&0\\
0&0&1\end{array}\right.
\left|\begin{array}{ccc}
*&\cdots &*\\
*&\cdots &*\\
0&\cdots &0
\end{array}\right),$$
$$B'_{II}:=\left( \begin{array}{ccc}
0&0&0\\
0&0&0\\
0&0&0 \end{array}\right.
\left| \begin{array}{ccccc}
0&\cdots&0&0&a_{1}\\
0&\cdots&0&0&0\\
0&\cdots&0&0&0\end{array}\right.
\left| \begin{array}{ccc}
1&0&0\\
0&1&0\\
0&0&1\end{array}\right.
\left|\begin{array}{ccc}
*&\cdots &*\\
*&\cdots &*\\
*&\cdots &*
\end{array}\right)$$
with the condition that the rank of the submatrix obtained considering only the last two rows of the last block is $1$;
\\
and
$$B'_{III}:=\left( \begin{array}{ccc}
0&0&0\\
0&0&0\\
0&0&0 \end{array}\right.
\left| \begin{array}{ccccc}
0&\cdots&0&0&0\\
0&\cdots&0&0&0\\
0&\cdots&0&0&0\end{array}\right.
\left| \begin{array}{ccc}
1&0&0\\
0&1&0\\
0&0&1\end{array}\right.
\left|\begin{array}{ccc}
*&\cdots &*\\
*&\cdots &*\\
*&\cdots &*
\end{array}\right)$$
with the condition that the last block has rank $2$.

\item[$B''$ case:] Let $i$ be the least index such that $B''_{2,i}$ is different from zero, $i=2, \ldots , r-1$. Observe that $p_{i,r,r+2+j}=B''_{2,i} \cdot B''_{3,r+2+j}$ for all $j=1, \ldots , n+1$. As previously we get from the parameterization that 
\begin{enumerate}
\item $p_{i,r,r+2+j}=\overline{a_{r-i+1}\mu_{3,r+2+j}}-\overline{a_{r-i+2}\mu_{2,r+2+j}}$ for $j=1, \ldots , n+1$;
\item $p_{i-1,r+1,r+2+j}=\overline{a_{r-i+1}\mu_{3,r+2+j}}-\overline{a_{r-i+3}\mu_{1,r+2+j}}$ that we know from the form of $B''$ that is zero for all $j=1, \ldots , n+1$;
\item $p_{i-2,r+2,r+2+j}=\overline{a_{r-i+2}\mu_{2,r+2+j}}-\overline{a_{r-i+3}\mu_{1,r+2+j}}$  that again we know from the form of $B''$ that is zero for all $j=1, \ldots , n+1$.
\end{enumerate}
Hence $p_{i,r,r+2+j}=B''_{2,i} \cdot B''_{3,r+2+j}=0$ and, since  $B''_{2,i}$ is different from zero, we get that $B''_{3,r+2+j}=0$ for all $j=1, \ldots , n+1$. Therefore $B''$ becomes:
$$B''=\left( \begin{array}{ccc}
0&*&*\\
0&*&*\\
0&0&0 \end{array}\right.
\left| \begin{array}{ccc}
*&\cdots&*\\
*&\cdots&*\\
0&\cdots&0\end{array}\right.
\left| \begin{array}{ccc}
1&0&0\\
0&1&0\\
0&0&1\end{array}\right.
\left|\begin{array}{ccc}
*&\cdots &*\\
*&\cdots &*\\
0&\cdots &0
\end{array}\right)$$
with the condition that the first two blocks have rank $1$.
\end{description}
It is not difficult to see that the case $B'_{I}$ is contained in the
case $B''$, hence the only remaining meaningful cases are $B'_{II}$,
$B'_{III}$ and $B''$ that describe respectively the sets
${\mathcal C}$, ${\mathcal B}$, and $\{\Lambda\in \GG (n-1,n+2)\;
| \; x\in
\Lambda \subseteq <\Lambda_{0}, O^{r+1}_{x}(\Sigma)>, \, \dim
(\Lambda\cap O^{r-1}_{x}(\Sigma))\geq r-2 \}$, which is clearly
contained in ${\mathcal A}$.

\bigskip

{\bf CASE $r=2$:} The analogous of the matrix $A$ defined
in (\ref{A}) now is:
$$A=\left(\begin{array}{cc}
a_2&a_{1}\\
0&a_2\\
0&0
\end{array}\right|
\left.\begin{array}{ccc}
 1&0+\mu_{1,4}a_2&0+\mu_{1,4}a_{1}+\mu_{1,5}a_2\\
a_{1}&1+\mu_{2,4}a_2&0+\mu_{2,4}a_{1}+\mu_{2,5}a_2\\
a_2&a_{1}+\mu_{3,4}a_2&1+\mu_{3,4}a_{1}+\mu_{3,5}a_2\end{array}\right|
$$
$$\left|\begin{array}{ccccc}
\mu_{1,4}+\mu_{1,5}a_{1}+\mu_{1,6}a_2&\cdots&
\mu_{1,n}+\mu_{1,n+1}a_1+\mu_{1,n+2}a_2&\mu_{1,n+1}+\mu_{1,n+2}a_1&\mu_{1,n+2}\\
\mu_{2,4}+\mu_{2,5}a_{1}+\mu_{2,6}a_2&\cdots&
\mu_{2,n}+\mu_{2,n+1}a_1+\mu_{2,n+2}a_2&\mu_{2,n+1}+\mu_{2,n+2}a_1&\mu_{2,n+2}\\
\mu_{3,4}+\mu_{3,5}a_{1}++\mu_{3,6}a_2&\cdots&
\mu_{3,n}+\mu_{3,n+1}a_1+\mu_{3,n+2}a_2&\mu_{3,n+1}+\mu_{3,n+2}a_1&\mu_{3,n+2}
\end{array} \right) .$$

With the usual notation, the affine parameterization of
$O^2_{\Lambda_0}(Z_y)$ yield that the matrix $B$ takes the form
$$B=\left(\begin{array}{cc}
a_2&a_1\\
\overline{a_2a_1}&\overline{a_1^2}-a_2\\
-\overline{a_2^2}&-2\overline{a_2a_1}
\end{array}\right|
\left.\begin{array}{ccc}
 1&0&0\\
0&1&0\\
0&0&1\end{array}\right|
\left.\begin{array}{ccc}
 *&\dots&*\\
*&\dots&*\\
*&\dots&*\end{array}\right)
$$

We also write the following  relevant parts of the
affine parameterization of $O^2_{\Lambda_0}(Z_y)$
\begin{enumerate}
\item\label{eq1} $p_{0,1,2}=p_{0,1,3}=0$,
\item\label{eq2} $p_{0,1,4}=\overline{a_2^2}=-p_{0,2,3}$,
\item\label{eq3} $p_{i,j,k}=0$ if $i,j,k\ne2,3,4$,
\item\label{eq4} $p_{0,3,i}=\overline{a_2\mu_{3,i-1}}=-p_{1,2,i}$
for
$i=5,\dots,n+2$ 
\end{enumerate}

Equalities \ref{eq1}. are precisely the vanishing of two of the
three minors of the left block of $B$. If it were
$\overline{a_2^2}\ne0$, also the third minor would be zero, i.e.
$p_{0,1,4}=0$. Thus equality \ref{eq2}. implies
$\overline{a_2^2}=0$. Hence we have $\overline{a_2^2}=0$ in any
case. Since
$p_{0,1,2}=0$, also $\overline{a_2a_1}=0$. Since also
$p_{0,1,3}=0$, either
$a_2$ or $\overline{a_1^2}-a_2$ are zero. With these vanishings in
mind, equations \ref{eq3}. say also that the submatrix of $B$ after
removing the central identity block has rank at most two. This
yields three possibilities for $B$. One of them corresponds
exactly to the set ${\mathcal C}$, while each the other two cases
splits, using equations \ref{eq4}., into two different
possibilities, which are inside the sets ${\mathcal A}$,
${\mathcal B}$ or ${\mathcal C}$.

\bigskip

{\bf CASE $r=1$:} The analogous of the matrix $A$ defined
in (\ref{A}) now is:
$$A=\left(\begin{array}{c}
a_{1}\\
0\\
0
\end{array}\right|
\left.\begin{array}{ccc}
 1&0&0+\mu_{1,4}a_{1}\\
a_{1}&1&0+\mu_{2,4}a_{1}\\
0&a_{1}&1+\mu_{3,4}a_{1}\end{array}\right|
\left.\begin{array}{ccc}
\mu_{1,4}+\mu_{1,5}a_{1}&\cdots &\mu_{1,n+2}\\
\mu_{2,4}+\mu_{2,5}a_{1}&\cdots &\mu_{2,n+2}\\
\mu_{3,4}+\mu_{3,5}a_{1}&\cdots &\mu_{3,n+2}
\end{array} \right) .$$
{}From this, we obtain our result as above.
\end{proof}

\begin{rem} \rm{The statement of Theorem \ref{O2Zramo} can be
improved. For example, when $r=1$ we know that equality holds, even
for arbitray $d$, although we preferred to write only the part we
need. }
\end{rem}

\begin{theorem}\label{osc 2 alla veronese} 
Let $\Lambda_{0}\in \GG (n-1,n+2)$ such that the intersection between
$\Lambda_{0}$ and $\Sigma$ in $\PP {n-1}$ is a zero-dimensional
scheme with support on $\{y_{1}, \ldots , y_{k}\}\subset\Sigma$
and degree $n$ such that each point $y_{i}$ has multiplicity
$r_{i}$ and
$\sum_{i=1}^{k}r_{i}=n$ (obviously $1\leq k \leq n$). Then, for
any $\Lambda \in O^{2}_{\Lambda_{0}}(V)\cap\GG (n-1,n+2)$, there
are two possibilities:
\begin{enumerate}
\item if $\dim(<\Lambda, \Lambda_{0}>)=n+1$ then there exist:
\begin{enumerate}
\item $y_{i_{1}}, y_{i_{2}}\in \Lambda_{0}\cap \Sigma$ such
that $\Lambda \cap \Sigma = \{ r_{1}y_{1}, \dots ,
(r_{i_{1}}-1)y_{i_{1}}, \dots , (r_{i_{1}}-1)y_{i_{2}}, \dots ,
r_{k}y_{k} \}$ and $\Lambda \cap \Lambda_{0}=<\Lambda \cap
\Sigma>$;
\item $Q'_{1}\in O_{y_{i_{1}}}^{r_{i_{1}}}(\Sigma),Q'_{2}\in
O_{y_{i_{2}}}^{r_{i_{2}}}(\Sigma)$ such that
$(r_{i_{1}}+1)y_{i_{1}}\in <\Lambda ,
Q'_{1}>,(r_{i_{2}}+1)y_{i_{2}}\in <\Lambda , Q'_{2}>$
\end{enumerate}
\item\label{2} if $\dim(<\Lambda, \Lambda_{0}>)=n$ then
\begin{enumerate}
\item either $\Lambda\in T_{\Lambda_0}V$;
\item\label{b} or there exist $y_{i_{1}}, y_{i_{2}}\in
\Lambda_{0}\cap \Sigma$ such that
$<r_1y_1,\dots,\widehat{r_{i_1}y_{i_1}},\dots,
\widehat{r_{i_2}y_{i_2}},\dots,r_ky_k>\subset\Lambda$
and
$<\Lambda ,\Lambda_{0}>=<\Lambda_{0},(r_{i_{1}}+2)y_{i_{1}}>\cap
<\Lambda_{0},(r_{i_{2}}+2)y_{i_{2}}>$.
\item\label{c} or there exists $y_{i}\in \Lambda_{0}\cap
\Sigma$ such that $<r_{1}y_{1},\dots ,\widehat{r_iy_{i}}, \dots
, r_{k}y_{k}>\subset \Lambda \subset
<\Lambda_{0},(r_{i}+2)y_{i}>$.
\end{enumerate}
\end{enumerate}

\end{theorem}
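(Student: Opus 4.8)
The strategy is the one already set up in Remark \ref{rami}: write a neighbourhood of $V$ near $\Lambda_0$ as the scheme-theoretic intersection $Z_1\cap\cdots\cap Z_k$ of the branches $Z_i=Z_{y_i}$ of Definition \ref{Z}, and use the inclusion
$$\bigcap_{i=1}^k O^2_{\Lambda_0}(Z_i)\subseteq O^2_{\Lambda_0}(V).$$
Here, however, the inclusion goes the \emph{wrong} way for what we want, so the first thing I would do is argue that for the purpose of computing $O^2_{\Lambda_0}(V)\cap\GG(n-1,n+2)$ it is in fact enough to intersect the right-hand side with the Grassmannian, i.e. that any element of $O^2_{\Lambda_0}(V)$ lying on $\GG(n-1,n+2)$ already lies in every $O^2_{\Lambda_0}(Z_i)$. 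This should follow by the same local-coordinate bookkeeping used in Theorem \ref{TLxZ}: in the affine Pl\"ucker chart $p_{r_i,\dots}\neq0$ around $\Lambda_0$, the parametrization of $V$ is obtained from that of $Z_i$ by adjoining the equations cutting out the remaining branches, and the second-order Taylor data of $V$ that can be realized by an honest linear subspace is exactly the common second-order data of the $Z_i$'s. So Step 1 reduces the problem to intersecting $\bigcap_i O^2_{\Lambda_0}(Z_i)$ with $\GG(n-1,n+2)$, and Theorem \ref{O2Zramo} gives $O^2_{\Lambda_0}(Z_i)\cap\GG(n-1,n+2)\subseteq\mathcal A_i\cup\mathcal B_i\cup\mathcal C_i$ for each $i$, where $\mathcal A_i,\mathcal B_i,\mathcal C_i$ are the sets of that theorem with $x=y_i$, $r=r_i$.

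\textbf{Step 2: the combinatorial intersection.} The core of the proof is then purely combinatorial/linear-algebraic: compute
$$\bigcap_{i=1}^k\bigl(\mathcal A_i\cup\mathcal B_i\cup\mathcal C_i\bigr)\ \cap\ \GG(n-1,n+2).$$
Expanding the product of unions gives a big union of terms, each a choice of one of $\{\mathcal A,\mathcal B,\mathcal C\}$ for each index $i$. I would organize the analysis by how many indices get a ``$\mathcal C$'' (the generic, tangential-looking condition $O^{r_i-1}_{y_i}(\Sigma)\subseteq\Lambda$, $\dim(\Lambda\cap\Lambda_0)\geq n-3$), how many get an ``$\mathcal A$'' (one more osculating direction allowed at $y_i$: $\Lambda\subseteq\langle\Lambda_0,O^{r_i+1}_{y_i}(\Sigma)\rangle$), and how many get a ``$\mathcal B$''. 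Several of these combinations force $\Lambda=\Lambda_0$ (when too many incompatible containment conditions are imposed) and are discarded; the surviving ones are: (i) all indices ``$\mathcal C$'', which by Remark \ref{rami} forces $\Lambda\in T_{\Lambda_0}V$ — this is case 2(a); (ii) exactly one index $i$ gets ``$\mathcal A$'' and the rest ``$\mathcal C$'', which forces $\langle r_1y_1,\dots,\widehat{r_iy_i},\dots,r_ky_k\rangle\subseteq\Lambda\subseteq\langle\Lambda_0,(r_i+2)y_i\rangle$ — this is case 2(c); (iii) two indices $i_1,i_2$ get (compatible) conditions pushing one extra osculating direction at each of $y_{i_1},y_{i_2}$, which forces $\dim\langle\Lambda,\Lambda_0\rangle=n+1$ and produces case 1, with $Q_1',Q_2'$ read off from the matrices $B'_{II}$/$B''$ as the points realizing $(r_{i_1}+1)y_{i_1}$, $(r_{i_2}+1)y_{i_2}$ in $\langle\Lambda,Q_j'\rangle$; and (iv) two indices $i_1,i_2$ get ``$\mathcal B$''-type conditions with the rest ``$\mathcal C$'', giving $\langle\Lambda,\Lambda_0\rangle=\langle\Lambda_0,(r_{i_1}+2)y_{i_1}\rangle\cap\langle\Lambda_0,(r_{i_2}+2)y_{i_2}\rangle$ with $\langle r_1y_1,\dots,\widehat{r_{i_1}y_{i_1}},\dots,\widehat{r_{i_2}y_{i_2}},\dots,r_ky_k\rangle\subseteq\Lambda$ — this is case 2(b). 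Throughout I would translate the matrix normal forms $B',B'',B'_{II},B'_{III}$ of Theorem \ref{O2Zramo} into the intrinsic statements about osculating spaces to $\Sigma$ exactly as was done at the end of that proof, being careful that the dimension count $\dim\langle\Lambda,\Lambda_0\rangle\in\{n,n+1\}$ is what distinguishes cases 1 and 2.

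\textbf{Step 3: the dimension dichotomy and cleanup.} Since $\Lambda$ and $\Lambda_0$ are both $(n-1)$-planes in $\PP{n+2}$ with $\dim(\Lambda\cap\Lambda_0)\geq n-3$ forced in every surviving case, we have $\dim\langle\Lambda,\Lambda_0\rangle\leq n+1$, and the two values $n$ (i.e. $\dim(\Lambda\cap\Lambda_0)\geq n-2$) and $n+1$ (i.e. $\dim(\Lambda\cap\Lambda_0)=n-3$) are precisely the two branches of the statement. For the $n+1$ branch one checks that $\Lambda\cap\Sigma$ has the asserted form by noting that the ``$\mathcal A$''-type condition at $y_{i_j}$ removes one unit of contact at $y_{i_j}$ while the ``$\mathcal C$''-type conditions preserve contact at the remaining points; the transfer of a unit of multiplicity \emph{from} $y_{i_1}$ \emph{to} $y_{i_2}$ (so that $\Lambda\cap\Sigma$ has $(r_{i_1}-1)$ at $y_{i_1}$ and $(r_{i_1}-1)$ — wait, $(r_{i_2}-1)$ is not right either; the exact multiplicities are dictated by the $B$-matrix normal form) must be extracted carefully from the normal forms. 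For the $n$ branch, case 2(a) is immediate from $O^1\subseteq O^2$ and Theorem \ref{spazio tg alla veronese}; cases 2(b),(c) come from the remaining combinations. I expect the main obstacle to be exactly this last bookkeeping: matching the block-matrix normal forms $B'_{II}, B'_{III}, B''$ (and their $r=2$, $r=1$ analogues) to the precise intrinsic incidence conditions and multiplicities, and verifying that no spurious combination survives — in particular checking that every combination not listed collapses to $\Lambda=\Lambda_0$, which requires a uniform argument (e.g. that imposing two independent ``$\mathcal A$ or $\mathcal B$'' conditions at three or more distinct $y_i$, or incompatible ones at the same $y_i$, over-determines the pencil $F(\cdot,\cdot)$). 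Once the normal-form translations are in place, assembling the four surviving families into the statement is routine.
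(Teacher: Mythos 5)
Your overall architecture coincides with the paper's: reduce to $\bigcap_{i=1}^k(\mathcal{A}_i\cup\mathcal{B}_i\cup\mathcal{C}_i)$ via Theorem \ref{O2Zramo}, then split on $\dim\langle\Lambda,\Lambda_0\rangle\in\{n,n+1\}$ and enumerate which choice of $\mathcal{A}/\mathcal{B}/\mathcal{C}$ at each $y_i$ can survive. (Your Step 1 worry is a non-issue: a neighbourhood of $V$ at $\Lambda_0$ sits inside each $Z_i$, so $O^2_{\Lambda_0}(V)\subseteq\bigcap_i O^2_{\Lambda_0}(Z_i)$ holds automatically; the inclusion displayed in Remark \ref{rami} is the one you do not need.)

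There are, however, two genuine gaps. First, your dictionary between combinations and cases is wrong in several entries. Since $O^{r_i-1}_{y_i}(\Sigma)=\langle r_iy_i\rangle$ and $\langle r_1y_1,\dots,r_ky_k\rangle=\Lambda_0$, the all-$\mathcal{C}$ combination forces $\Lambda=\Lambda_0$ and does not produce case 2(a); case 2(a) comes from one $\mathcal{B}$ and the rest $\mathcal{C}$ (together with Corollary \ref{tau and grass}); case 2(b) comes from two $\mathcal{A}$'s and the rest $\mathcal{C}$, not from two $\mathcal{B}$'s; and case 1 is exactly the two-$\mathcal{B}$ combination in the regime $\dim\langle\Lambda,\Lambda_0\rangle=n+1$. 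Second, and more importantly, eliminating all the other combinations is the real content of the proof and cannot be dispatched by a uniform ``over-determination'' argument: one needs (a) the bound $\deg(H\cap\Sigma)\le n+2$ for the hyperplane $H=\langle\Lambda,\Lambda_0\rangle$, which kills most combinations when $\dim\langle\Lambda,\Lambda_0\rangle=n+1$ and also rules out $\mathcal{A}$ in that regime via $\dim(\Lambda\cap\Lambda_0)\ge n-2$; (b) the projection from $\Lambda_0$ to $\PP2$, under which $\Sigma$ becomes a conic and the conditions $\mathcal{A}_i$ and $\mathcal{B}_i\setminus\mathcal{C}_i$ become concurrency conditions on tangent lines, which is what shows that three $\mathcal{A}$'s, an $\mathcal{A}$ with a $\mathcal{B}\setminus\mathcal{C}$, or two $\mathcal{B}\setminus\mathcal{C}$'s give nothing when $\dim\langle\Lambda,\Lambda_0\rangle=n$; and (c) the fact that a cubic form has at most one double linear factor, so that $\Lambda$ cannot lie in two distinct tangent spaces to $V$ --- this is precisely what pins down $\Lambda\cap\Sigma$ exactly in case 1, i.e.\ the point where your Step 3 visibly stalls. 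Without (a)--(c) the enumeration does not close up, so as written the plan does not yet constitute a proof.
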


\begin{proof} For each $i=1,\dots,k$, let
${\mathcal A}_i,{\mathcal B}_i,{\mathcal C}_i\subset\GG(n-1,n+2)$
be the sets defined in the statement of Theorem \ref{O2Zramo} for
the point
$y_i\in\Sigma$. By Remark \ref{rami} and Theorem \ref{O2Zramo},
we have
\begin{equation}\label{sp osc Veronese}O^{2}_{\Lambda_{0}}(V)\cap
\GG (n-1,n+2)\subset\bigcap_{i=1}^{k} ({\mathcal A}_{i} \cup
{\mathcal B}_{i}\cup {\mathcal C}_{i}).\end{equation}

It is clear from \eqref{sp osc Veronese} that if $\Lambda \in
O^{2}_{\Lambda_{0}}(V)\cap\GG (n-1,n+2)$ the dimension of $<\Lambda,
\Lambda_{0}>$ is either $n$ or $n+1$. 

\begin{enumerate}
\item Assume that $\dim(<\Lambda, \Lambda_{0}>)=n+1$. We always
have $O^{r_{i}-1}_{y_{i}}(\Sigma)\subset <\Lambda ,
\Lambda_{0}>$. Moreover, if $\Lambda \in {\mathcal A}_{i}$, then
$<\Lambda,
\Lambda_{0}>=<\Lambda_{0}, O^{r_{i}+1}_{y_{i}}(\Sigma)>$ hence
$O^{r_{i}+1}_{y_{i}}(\Sigma)\subset <\Lambda , \Lambda_{0}>$.
Also, if $\Lambda \in {\mathcal B}_{i}$, then $<\Lambda,
\Lambda_{0}>=<\Lambda , \Lambda_{0}, O^{r_{i}}_{y_{i}}(\Sigma)>$
hence $O^{r_{i}}_{y_{i}}(\Sigma)\subset <\Lambda ,
\Lambda_{0}>$.  Since in $<\Lambda , \Lambda_{0}>$ there are at
most $n+2$ points of $\Sigma$ (counted with multiplicity), then
it follows that an intersection of $k$ sets of the form ${\mathcal
A}_{i}, {\mathcal B}_{j}, {\mathcal C}_{k}$ is larger that
$\{\Lambda_0\}$ only if is of the type
$ {\mathcal C}_{1} \cap \cdots \cap \widehat{{\mathcal
C}_{i_{1}}}\cap \cdots \cap
\widehat{{\mathcal C}_{i_{2}}}\cap \cdots \cap {\mathcal C}_{k}\cap
{\mathcal B}_{i_{1}}\cap {\mathcal B}_{i_{2}}$ or $ {\mathcal
C}_{1} \cap \cdots \cap
\widehat{{\mathcal C}_{i}}\cap \cdots \cap {\mathcal C}_{k}\cap
{\mathcal A}_{i}$. The latter is not possible because otherwise
$\Lambda\cap\Lambda_0$ would contain all the
$r_jy_j$ with $j\ne i$ and also a hyperplane of $<r_iy_i>$, and
hence its dimension would be at least $n-2$, which would imply
that $\dim(<\Lambda,
\Lambda_{0}>)<n+1$, contrary to our hypothesis.

 Assume for simplicity $i_{1}=1,i_{2}=2$. Now clearly
$<(r_{1}-1)y_{1}, (r_{2}-1)y_{2},r_{3}y_{3},\dots ,
r_{k}y_{k}>\subset\Lambda$ and there exist $Q'_{1}\in
O^{r_{1}}_{y_{1}}(\Sigma)$ and $Q'_{2}\in
O^{r_{2}}_{y_{2}}(\Sigma)$ such that $<(r_{1}+1)y_{1},
(r_{2}-1)y_{2},r_{3}y_{3},\dots , r_{k}y_{k}>\subset <\Lambda ,
Q'_{1}>$,
$<(r_{1}-1)y_{1}, (r_{2}+1)y_{2},r_{3}y_{3},\dots ,
r_{k}y_{k}>\subset <\Lambda , Q'_{2}>$. 
$\Lambda_{x}=<x,r_{1}y_{1}, (r_{2}-1)y_{2},r_{3}y_{3},\dots ,
r_{k}y_{k}>$, it follows from Corollary \ref{tau and grass}
that $\Lambda \in T_{\Lambda_{x}}V$. Hence $\Lambda$ should
belong to an infinite number of tangent space to $V$, and this
is absurd. Now it remains to show that $\Lambda \cap \Sigma$ is
not bigger than
$\{(r_{1}-1)y_{1}, (r_{2}-1)y_{2},r_{3}y_{3},\dots ,
r_{k}y_{k}\}$. Since
$\dim (\Lambda \cap \Lambda_{0})<n-2$ it cannot happen that
$r_{1}y_{1}$ or $r_{2}y_{2}$ belong to $\Lambda$. Then it is
sufficient to show that, for example, $(r_{3}+1)y_{3}\notin
\Lambda$ (if we allow $r_{3}=0$ then we are considering the
case $y_{3}\notin \Lambda_{0}$). Suppose for contradiction that
$(r_{1}-1)y_{1},(r_{2}-1)y_{2},(r_{3}+1)y_{3}, r_{4}y_{4},
\dots , r_{k}y_{k}\in \Lambda$. Hence  from Corollary
\ref{tau and grass} that $\Lambda \in T_{\Lambda_{1}}V$ where
$\Lambda_{1}=<r_{1}y_{1},
(r_{2}-1)y_{2},(r_{3}+1)y_{3},r_{4}y_{4}\dots , r_{k}y_{k}>$.
Analogously $\Lambda \in T_{\Lambda_{2}}V$ where
$\Lambda_{2}=<(r_{1}-1)y_{1},
r_{2}y_{2},(r_{3}+1)y_{3},r_{4}y_{4}\dots , r_{k}y_{k}>$. Since
$\Lambda$ corresponds to a degree three form, it is not
possible $\Lambda$ belongs to two different tangent spaces
because the elements of the tangent spaces corresponds to a
form containing a double factor.

\item Assume now that $\dim(<\Lambda, \Lambda_{0}>)=n$. Then the projection $\pi : \PP {n+2} \rightarrow \PP {2}$ from $\Lambda_{0}$ sends  $\Lambda$ in a point $P$ of $\PP 2$. Under this projection $\Sigma$ is sent to a conic $Q$ and the image $P_{i}$ of each
$y_{i}\in\Sigma$ is obtained by projecting $<(r_{i}+1)y_{i}>$. 
\\
If $\Lambda \in {\mathcal A}_{i}$ for some $i=1, \dots , k$, then
$\Lambda \subset <\Lambda_{0}, (r_{i}+2)y_{i}>$ and hence $P$
belongs to the tangent line in
$P_{i}$ to $Q$.
\\
If instead $\Lambda \in {\mathcal B}_{i}\backslash {\mathcal
C}_{i}$ for some
$i=1,
\dots , k$, then $\dim (\Lambda \cap <(r_{i}+1)y_{i}>)\geq
r_{i}-1$ and, since $\dim(\Lambda\cap\Lambda_0)\ge n-3$, then 
$<r_iy_{i}>$ is not contained in
$\Lambda$. Hence there exist $P'\in \Lambda \cap
<(r_{i}+1)y_{i}>\backslash <r_{i}y_{i}>$. Since $P'\in \Lambda$,
then $\pi(P')=P$, while since $P'\in 
<(r_{i}+1)y_{i}>\backslash <r_{i}y_{i}>$, also $\pi(P')=P_i$, so
that  $P=P_{i}$.
\\
{}From this description it is clear that intersections involving either three
${\mathcal A}_{i}$'s or one $({\mathcal B}_{j}\backslash {\mathcal
C}_{j})$'s and one
${\mathcal A}_{i}$'s or two ${\mathcal B}_{j}\backslash {\mathcal
C}_{j}$'s are empty. Let us study the remaining cases.
\begin{enumerate}

\item Assume first, after reordering, that $\Lambda \in {\mathcal
B}_{1}\cap {\mathcal C}_{2}\cap \cdots \cap {\mathcal C}_{k}$. By
definition
$<(r_{1}-1)y_{1}, r_{2}y_{2}, \dots ,r_{k}y_{k}>\subset \Lambda$
and there exists
$Q'\in <(r_{1}+1)y_{1}>$ such that $(r_{1}+1)y_{1}\in <Q',
\Lambda>$ hence $\Lambda \subset <Q',\Lambda>=<(r_{1}+1)y_{1},
r_{2}y_{2}, \dots ,r_{k}y_{k}>$. By Corollary \ref{tau and
grass},
$\Lambda \in T_{\Lambda_{0}}(V)$.

\item Assume now, after reordering,  $\Lambda \in {\mathcal
A}_{1}\cap {\mathcal A}_{2}\cap {\mathcal C}_{3}\cap\dots
\cap{\mathcal C}_{k}$. By definition
$<\Lambda,\Lambda_0>
\subseteq <\Lambda_{0},(r_{1}+2)y_{1}>\cap
<\Lambda_{0},(r_{2}+2)y_{2}>$ and this is an equality because
both the spaces on the left and the right hand side have the
same dimension $n$.

\item The last case $\Lambda \in  {\mathcal C}_{1}\cap \dots \cap
{\mathcal A}_{i}\cap
\dots \cap {\mathcal C}_{k}$ is trivial by definition.
\end{enumerate}
\end{enumerate}
\end{proof}

\section{Split Variety and the Grassmannian}\label{Split e Grass}

\begin{propos}\label{finalmente}
Let $\Lambda \in O_{\Lambda_{1}}^{2}(V) \cap
O_{\Lambda_{2}}^{2}(V)\cap \GG (n-1,n+2)$ for some
$\Lambda_{1},\Lambda_{2}\in V$, and assume $\dim (<\Lambda
,\Lambda_{1}>)=\dim (<\Lambda
,\Lambda_{2}>)=n+1$. Then there exist $s_1y_{1}, \ldots
, s_ky_{k}\in \Sigma$ with $\sum_{i=1}^{k}s_{i}=n-2$ such that
$\Lambda_{1}=<(s_{1}+1)y_{1}, (s_{2}+1)y_{2}, s_{3}y_{3},
s_{4}y_{4}\ldots , s_{k}y_{k}>$,  $\Lambda_{2}=<(s_{1}+1)y_{1},
s_{2}y_{2}, (s_{3}+1)y_{3}, \ldots , s_{k}y_{k}>$ and 
$$\Lambda=<(s_{1}+2)y_{1}, (s_{2}+2)y_{2}, s_{3}y_{3},
s_{4}y_{4}\ldots , s_{k}y_{k}>\cap $$
$$\cap<(s_{1}+2)y_{1}, s_{2}y_{2}, (s_{3}+2)y_{3}, \ldots ,
s_{k}y_{k}>\cap <s_{1}y_{1}, (s_{2}+2)y_{2}, (s_{3}+2)y_{3},
\ldots , s_{k}y_{k}>.$$
\end{propos}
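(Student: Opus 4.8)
I would begin by applying Theorem \ref{osc 2 alla veronese} twice. Since $\Lambda_1,\Lambda_2\in V$, each $\Lambda_i$ is spanned by the scheme $\Lambda_i\cap\Sigma$ (Lemma \ref{nu_d(Pn)intG(n-1,n+d-1)}). As $\dim\langle\Lambda,\Lambda_1\rangle=n+1$, case (1) of Theorem \ref{osc 2 alla veronese} applied to $\Lambda_0=\Lambda_1$ produces distinct points $y_{i_1},y_{i_2}$ in the support of $\Lambda_1\cap\Sigma$ with $\Lambda\cap\Sigma=\Lambda_1\cap\Sigma-y_{i_1}-y_{i_2}$ and $\Lambda\cap\Lambda_1=\langle\Lambda\cap\Sigma\rangle$, together with points $Q_1',Q_2'$ on the relevant osculating spaces such that $O^{r_{i_1}}_{y_{i_1}}(\Sigma)\subset\langle\Lambda,Q_1'\rangle$ and $O^{r_{i_2}}_{y_{i_2}}(\Sigma)\subset\langle\Lambda,Q_2'\rangle$. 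Set $W:=\Lambda\cap\Sigma$, a length $n-2$ subscheme of $\Sigma$; then $\langle W\rangle\subseteq\Lambda\cap\Lambda_1$. The same theorem with $\Lambda_0=\Lambda_2$ yields distinct $z_{j_1},z_{j_2}\in\Sigma$ with $\Lambda_2\cap\Sigma=W+z_{j_1}+z_{j_2}$, $\langle W\rangle\subseteq\Lambda_2$, and points $Q_1'',Q_2''$ with the analogous property. I may assume $\Lambda_1\neq\Lambda_2$; then, since a cubic form divisible by two non-proportional linear forms splits completely, Remark \ref{Split-osc} gives $\Lambda\in O^2_{\Lambda_1}(V)\cap O^2_{\Lambda_2}(V)\subseteq\Split_3(\PP n)$.

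\textbf{Step 2: project from $\langle W\rangle$ and reduce to $n=2$.} Since $\langle W\rangle$ lies in each of $\Lambda,\Lambda_1,\Lambda_2$, I would project $\PP{n+2}$ from the $(n-3)$-plane $\langle W\rangle$ onto $\PP 4$. By the diagram of Lemma \ref{contained} (with $s=n-2$), $\Sigma$ becomes a rational normal quartic $\Sigma'\subset\PP 4$, the locus $G'$ of members of $\GG(n-1,n+2)$ containing $W$ is identified with $\GG(1,4)$, the copy of $V$ inside $G'$ is the cubic Veronese surface $V_2=\nu_3(\PP 2)$, and (as $3-2\ge1$, by Remark \ref{Split-osc}) for $\Lambda_i'\in V_2$ one has $O^2_{\Lambda_i}(V)\cap G'=O^2_{\Lambda_i'}(V_2)$. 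Hence $\Lambda$ maps to a line $\Lambda'\in O^2_{\Lambda_1'}(V_2)\cap O^2_{\Lambda_2'}(V_2)\cap\GG(1,4)\cap\Split_3(\PP 2)$ with $\dim\langle\Lambda',\Lambda_i'\rangle=3$; since $\Lambda\cap\Sigma=W=\Sigma\cap\langle W\rangle$ we get $\Lambda'\cap\Sigma'=\emptyset$, while $\Lambda_1'$ is the chord of $\Sigma'$ through the images $y_{i_1}',y_{i_2}'$ and $\Lambda_2'$ the chord through $z_{j_1}',z_{j_2}'$; and the conditions on $Q_1',Q_2',Q_1'',Q_2''$ translate, upon projection, into the statement that $\Lambda'$ meets each of the four tangent lines $T_{y_{i_1}'}\Sigma',T_{y_{i_2}'}\Sigma',T_{z_{j_1}'}\Sigma',T_{z_{j_2}'}\Sigma'$. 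It then suffices to treat $n=2$ (where $W=\emptyset$, all $s_i=0$) and to unproject.

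\textbf{Step 3: the case $n=2$.} Here $\Lambda',\Lambda_1',\Lambda_2'$ lie in $\PP 4$. From $\dim\langle\Lambda',\Lambda_i'\rangle=3$, and since two lines of $\PP 4$ spanning a $\PP 3$ are disjoint, $\Lambda'$ meets neither $\Lambda_1'$ nor $\Lambda_2'$; in particular neither $\Lambda_i'$ can be tangent to $\Sigma'$ (it would then be one of the tangent lines that $\Lambda'$ does meet), so both are genuine secants, $\Lambda_1'=\langle y_{i_1}',y_{i_2}'\rangle$, $\Lambda_2'=\langle z_{j_1}',z_{j_2}'\rangle$ with $y_{i_1}'\neq y_{i_2}'$, $z_{j_1}'\neq z_{j_2}'$. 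If $\{y_{i_1}',y_{i_2}'\}=\{z_{j_1}',z_{j_2}'\}$ then $\Lambda_1'=\Lambda_2'$, hence $\Lambda_1=\Lambda_2$, contrary to assumption; if the two pairs are disjoint, then describing $O^2_{\Lambda_1'}(V_2)\cap O^2_{\Lambda_2'}(V_2)\cap\GG(1,4)$ by Theorem \ref{osc 2 alla veronese} (applied to each of $\Lambda_1',\Lambda_2'$ and intersected) forces $\Lambda'$ to be one of the subspaces listed in Example \ref{non in Split3P2}, none of which lies in $\Split_3(\PP 2)$ — contradicting Step 1. So the two pairs share exactly one point; put $y_1':=y_{i_1}'=z_{j_1}'$, $y_2':=y_{i_2}'$, $y_3':=z_{j_2}'$. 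Then $\Lambda'$ meets each of $T_{y_1'}\Sigma',T_{y_2'}\Sigma',T_{y_3'}\Sigma'$, and since tangent lines of $\Sigma'$ at distinct points are disjoint, a line meeting two of them lies in their span; hence $\Lambda'$ lies in each of the hyperplanes $\langle 2y_1'+2y_2'\rangle$, $\langle 2y_1'+2y_3'\rangle$, $\langle 2y_2'+2y_3'\rangle$. By Example \ref{altra componente Split3P2} these three hyperplanes meet along a single line, which is therefore $\Lambda'$; and $\Lambda_1'=\langle y_1',y_2'\rangle$, $\Lambda_2'=\langle y_1',y_3'\rangle$.

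\textbf{Step 4: unprojecting, and the main obstacle.} Finally, let $y_1,y_2,y_3\in\Sigma$ lie over $y_1',y_2',y_3'$, let $y_4,\dots,y_k$ be the remaining support of $W$, and set $s_i:=\mathrm{length}_{y_i}(W)$, so $\sum_{i=1}^k s_i=n-2$. Taking closures of preimages under the projection from $\langle W\rangle$ commutes with the linear operations involved: the preimages of $\langle y_1',y_2'\rangle$ and $\langle y_1',y_3'\rangle$ are $\langle(s_1+1)y_1,(s_2+1)y_2,s_3y_3,\dots,s_ky_k\rangle=\Lambda_1$ and $\langle(s_1+1)y_1,s_2y_2,(s_3+1)y_3,\dots,s_ky_k\rangle=\Lambda_2$, while the preimages of the three hyperplanes above are $\langle(s_1+2)y_1,(s_2+2)y_2,s_3y_3,\dots\rangle$, $\langle(s_1+2)y_1,s_2y_2,(s_3+2)y_3,\dots\rangle$ and $\langle s_1y_1,(s_2+2)y_2,(s_3+2)y_3,\dots\rangle$, whose intersection is $\Lambda$ — exactly the asserted description. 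The hard part is the $n=2$ bookkeeping in Step 3: one must run Theorem \ref{osc 2 alla veronese} twice, intersect the various unions of the sets $\mathcal A_i,\mathcal B_i,\mathcal C_i$ that occur there, and match the surviving configurations against Examples \ref{altra componente Split3P2} and \ref{non in Split3P2}, checking that the constraint $\Lambda'\in\Split_3(\PP 2)$ singles out precisely the configuration claimed.
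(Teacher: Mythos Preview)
Your projection-to-$\PP 4$ strategy is sound and in fact the paper itself uses this projection (from $\langle W\rangle$, with $W=\Lambda\cap\Sigma$) to finish the argument; the difference is that you invoke it earlier and use it to organize the whole proof, whereas the paper first works in $\PP{n+2}$ to rule out the ``four different points'' configuration by a direct counting of points of $\Sigma$ contained in $\langle\Lambda,Q'_1,Q''_1\rangle$, and only then projects to identify $\Lambda$ via Example~\ref{altra componente Split3P2}. Either route is viable.

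There is, however, a genuine gap in your Step~3 when the two pairs $\{y'_{i_1},y'_{i_2}\}$ and $\{z'_{j_1},z'_{j_2}\}$ are disjoint. You assert that Theorem~\ref{osc 2 alla veronese} together with Example~\ref{non in Split3P2} forces $\Lambda'$ to be one of the five lines listed there, hence not in $\Split_3(\PP 2)$. But the five configurations in Example~\ref{non in Split3P2} are tailored to the hypotheses of Lemma~\ref{quasi} (where $\dim\langle\Lambda,\Lambda_i\rangle=n$), and none of them is of the shape $\langle 2y'_1,2y'_2\rangle\cap\langle 2y'_3,2y'_4\rangle$ that arises from a line meeting four distinct tangent lines of $\Sigma'$. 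So the appeal to Example~\ref{non in Split3P2} is misplaced and the disjoint case is not yet excluded.

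The fix is immediate with the tools you already have on the table. You correctly argue that $\Lambda'$ meets each of $T_{y'_1}\Sigma',\dots,T_{y'_4}\Sigma'$, hence lies in every hyperplane $\langle 2y'_a,2y'_b\rangle$ with $a\ne b$. For each triple $\{a,b,c\}\subset\{1,2,3,4\}$, Example~\ref{altra componente Split3P2} shows that
\[
L_{abc}:=\langle 2y'_a,2y'_b\rangle\cap\langle 2y'_a,2y'_c\rangle\cap\langle 2y'_b,2y'_c\rangle
\]
is a single line depending only on $y'_a,y'_b,y'_c$; thus $\Lambda'=L_{123}=L_{124}$, which is impossible since the explicit description in Example~\ref{altra componente Split3P2} (or the triple transitivity of $\mathrm{Aut}(\Sigma')$) shows $L_{123}\ne L_{124}$ whenever $y'_3\ne y'_4$. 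This is the argument that should replace your invocation of Example~\ref{non in Split3P2}. With this correction your Steps~3--4 go through and your proof is complete.
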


\begin{proof} 
{}From Theorem \ref{osc 2 alla veronese} we can derive $\Lambda
\cap \Lambda_{1}=<\Lambda \cap \Sigma>=\Lambda \cap \Lambda_{2}$
and $\Lambda \cap \Sigma =\{ s_{1}y_{1},\dots,s_{k}y_{k} \}$
with $\sum_{i=1}^{k}s_{i}=n-2$ (the $s_{i}$'s do not have to be
necessarily different from zero). Moreover we know that
$\Lambda_{1}, \Lambda_{2}$ can be obtained form $<\Lambda \cap
\Sigma>$ increasing two $s_{i}$'s by $1$.

We show now that  the $s_{i}$'s we have to increase do not
correspond to four different $y_{i}$'s. Assume for
contradiction, up to reordering, that
$\Lambda_{1}=<(s_{1}+1)y_{1}, (s_{2}+1)y_{2}, s_{3}y_{3},
s_{4}y_{4}, s_{5}y_{5}, \dots , s_{k}y_{k}>$,
$\Lambda_{2}=<s_{1}y_{1}, s_{2}y_{2}, (s_{3}+1)y_{3},
(s_{4}+1)y_{4}, s_{5}y_{5}, \dots , s_{k}y_{k}>$. By Theorem
\ref{osc 2 alla veronese} there exist $Q'_{1}\in
O^{s_{1}+1}_{y_{1}}(\Sigma)$, $Q''_{1}\in
O_{y_{3}}^{s_{3}+1}(\Sigma)$ such that $(s_{1}+2)y_{1}\in
<\Lambda , Q'_{1}>$ and $(s_{3}+2)y_{3}\in <\Lambda , Q''_{1}>$,
hence the $(n+1)$-dimensional subspace $<\Lambda ,
Q'_{1},Q''_1>$ contains the  following $n+4$ points of $\Sigma$:
$(s_{1}+2)y_{1}, (s_{2}+1)y_{2}, (s_{3}+2)y_{3}, (s_{4}+1)y_{4},
s_{5}y_{5}, \dots , s_{k}y_{k}$, which is clearly a
contradiction. Hence we can assume, up to reordering, 
$$\Lambda_{1}=<(s_{1}+1)y_{1},(s_{2}+1)y_{2}, s_{3}y_{3},
s_{4}y_{4}, \dots , s_{k}y_{k}>$$ 
$$\Lambda_{2}=<(s_{1}+1)y_{1},s_{2}y_{2}, (s_{3}+1)y_{3},
s_{4}y_{4}, \dots , s_{k}y_{k}>.$$ 

By Theorem \ref{osc 2 alla veronese}, there exists
$Q'_1\in<(s_1+2)y_1>$ such that
$<(s_1+2)y_1>\subset<\Lambda,Q'_1>$. Since $\Lambda$ is a
hyperplane in $<\Lambda,Q'_1>$, we can find
$R'_1\in\Lambda\cap<(s_1+2)y_1>\setminus<s_1y_1>$. Analogously,
we can find $R'_2\in\Lambda\cap<(s_2+2)y_2>\setminus<s_2y_2>$
and $R'_3\in\Lambda\cap<(s_3+2)y_3>\setminus<s_3y_3>$. 

We claim that $<s_1y_1,\dots,s_ky_k,R'_1,R'_2>$ has dimension
$n-1$. Indeed, if $s_1y_1,\dots,s_ky_k,R'_1,R'_2$ were
dependent, the projection from $<s_1y_1,\dots,s_ky_k>$ would
produce a rational normal curve in $\PP4$ in which the tangent
lines at the image of $y_1$ and $y_2$ would meet at the image
of $R'_1$ (which would have the same image as $R'_2$), but this
is impossible. As a consequence of the claim,
$\Lambda=<s_1y_1,\dots,s_ky_k,R'_1,R'_2>$, so that it is
contained in $<(s_{1}+2)y_{1}, (s_{2}+2)y_{2}, s_{3}y_{3},
s_{4}y_{4},
 \dots , s_{k}y_{k}>$.

Analogously, $\Lambda\subset<(s_{1}+2)y_{1},
s_{2}y_{2}, (s_{3}+2)y_{3}, s_4y_{4}, \dots ,
s_{k}y_{k}>$ and $\Lambda\subset<s_{1}y_{1}, (s_{2}+2)y_{2},
(s_{3}+2)y_{3},
\ldots , s_{k}y_{k}>$. Therefore
$$\Lambda\subset<(s_{1}+2)y_{1}, (s_{2}+2)y_{2}, s_{3}y_{3},
s_{4}y_{4}\ldots , s_{k}y_{k}>\cap $$
$$\cap<(s_{1}+2)y_{1}, s_{2}y_{2}, (s_{3}+2)y_{3}, \ldots ,
s_{k}y_{k}>\cap <s_{1}y_{1}, (s_{2}+2)y_{2}, (s_{3}+2)y_{3},
\ldots , s_{k}y_{k}>.$$
We actually have an equality, since otherwise the usual
projection from $<s_1y_1,\dots,s_ky_k>$ would produce a rational
normal curve $\Sigma'\subset\PP4$, with points $y'_1,y'_2,y'_3$
such that the intersection $<T_{y'_1}\Sigma',T_{y'_2}\Sigma'>
\cap<T_{y'_1}\Sigma',T_{y'_3}\Sigma'>
\cap<T_{y'_2}\Sigma',T_{y'_3}\Sigma'>$ is more than a line. But
since $\Sigma'$ is homogeneous, the same would be true for any
choice of three points of $\Sigma'$, which is not true, as we
showed in Example \ref{altra componente Split3P2}.
\end{proof}

\begin{lemma}\label{quasi}
Let $\Lambda \in (O_{\Lambda_{1}}^{2}(V)\backslash T_{\Lambda_{1}}(V)) \cap (O_{\Lambda_{2}}^{2}(V)\backslash T_{\Lambda_{2}}(V))\cap
\GG (n-1,n+2)$ for some $\Lambda_{1},\Lambda_{2}\in V$, and assume $\dim
(<\Lambda ,\Lambda_{1}>)=\dim (<\Lambda , \Lambda_{2}>)=n$. Then
$\Lambda_1$ and $\Lambda_2$ have $n-1$ points of $\Sigma$ in
common (counted with multiplicity).
\end{lemma}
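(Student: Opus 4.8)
The plan is to translate everything into a statement about the schematic intersection $\Lambda\cap\Sigma$ and then to combine Theorem~\ref{osc 2 alla veronese} with the list of forbidden configurations of Example~\ref{non in Split3P2}. First I would dispose of the case $\Lambda_1=\Lambda_2$ (where the assertion is empty) and record the algebraic content of the hypotheses. Writing $\Lambda_i=[L_i^3]$, Remark~\ref{Split-osc} identifies $O^2_{\Lambda_i}(V)$ with $\{[L_iF]\ |\ F\in R_2\}$, so since $\Lambda_1\ne\Lambda_2$ forces $L_1\not\sim L_2$, unique factorization in $K[x_0,\dots,x_n]$ gives $O^2_{\Lambda_1}(V)\cap O^2_{\Lambda_2}(V)=\{[L_1L_2N]\ |\ N\in R_1\}\subseteq\Split_3(\PP n)$, while the conditions $\Lambda\notin T_{\Lambda_i}(V)=\{[L_i^2M]\}$ say precisely that $L_1,L_2,N$ are pairwise non-proportional. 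In particular $\Lambda=[L_1L_2N]\notin V$, so $\Lambda\cap\Sigma$ has length at most $n-1$ (an $(n-1)$-plane meeting $\Sigma$ in length $n$ is the span of that divisor, hence lies in $V$). Applying Proposition~\ref{punti in osculanti} with $k=2<d=3$ to both $O^2_{\Lambda_1}(V)$ and $O^2_{\Lambda_2}(V)$, the scheme $\Lambda\cap\Sigma$ is contained in $D_1:=\Lambda_1\cap\Sigma$ and in $D_2:=\Lambda_2\cap\Sigma$; thus $D_1$ and $D_2$ already contain the common subscheme $\Lambda\cap\Sigma$, and the lemma reduces to showing ${\rm length}(\Lambda\cap\Sigma)\ge n-1$.

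Next I would run Theorem~\ref{osc 2 alla veronese} on $\Lambda_1$ and on $\Lambda_2$. The hypotheses $\Lambda\notin T_{\Lambda_i}(V)$ and $\dim\langle\Lambda,\Lambda_i\rangle=n$ (equivalently $\dim(\Lambda\cap\Lambda_i)=n-2$) put $\Lambda$ in case (2b) or (2c) of that theorem relative to each $\Lambda_i$. Unwinding the containments provided there, and using $\Lambda\cap\Sigma\subseteq D_i$ together with $\Lambda\notin V$, one sees that $\Lambda\cap\Sigma$ coincides with $D_1$ except over at most two points of the support of $D_1$, and likewise for $D_2$; comparing the two descriptions forces $D_1$ and $D_2$ to agree off those finitely many points, and a short bookkeeping then shows that either $D_1$ and $D_2$ share a subscheme of $\Sigma$ of length $n-1$ --- in which case we are done --- or else $\Lambda\cap\Sigma$ has length exactly $n-2$ and equals the greatest common subscheme of $D_1$ and $D_2$.

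It remains to rule out that last alternative. Suppose $Z:=\Lambda\cap\Sigma$ has length $n-2$ and $Z=\gcd(D_1,D_2)$. Since $Z\subseteq\Lambda\cap\Lambda_1\cap\Lambda_2$, I would project $\PP{n+2}$ from $\langle Z\rangle$ to $\PP4$: then $\Sigma$ becomes a rational normal curve $\Sigma'\subset\PP4$, the spaces $\Lambda_1,\Lambda_2,\Lambda$ become lines $\Lambda_1',\Lambda_2',\Lambda'$, the divisors $\Lambda_1'\cap\Sigma'$ and $\Lambda_2'\cap\Sigma'$ have length $2$ and disjoint supports, and by the compatibility of osculating spaces with these projections (Lemma~\ref{contained} and the identifications used in the proof of Proposition~\ref{Xn+1 in Split}) one gets $\Lambda'\in(O^2_{\Lambda_1'}(V')\setminus T_{\Lambda_1'}(V'))\cap(O^2_{\Lambda_2'}(V')\setminus T_{\Lambda_2'}(V'))\cap\GG(1,4)$, where $V'=\nu_3(\PP2)$. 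Running Theorem~\ref{osc 2 alla veronese} for $n=2$ on $\Lambda_1'$ and $\Lambda_2'$ and going through the finitely many relative positions of the two disjoint length-$2$ divisors, $\Lambda'$ is forced to be one of the five lines exhibited in Example~\ref{non in Split3P2}; but those were shown there not to lie in $\Split_3(\PP2)$, which contradicts $\Lambda'\in O^2_{\Lambda_1'}(V')\cap O^2_{\Lambda_2'}(V')\subseteq\Split_3(\PP2)$. Hence ${\rm length}(\Lambda\cap\Sigma)=n-1$, and $D_1$ and $D_2$ share a subscheme of $\Sigma$ of length $n-1$, as wanted.

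I expect the second step to be the real obstacle: extracting from the three overlapping families $\mathcal{A}_i,\mathcal{B}_i,\mathcal{C}_i$ of Theorem~\ref{osc 2 alla veronese} exactly which subscheme of $D_i$ is forced inside $\Lambda$, and in particular obtaining that when $D_1$ and $D_2$ fail to share $n-1$ points they share exactly $n-2$, so that the projection lands in $\PP4$ and Example~\ref{non in Split3P2} applies. The five configurations listed there are precisely calibrated to exhaust the cases that survive this reduction.
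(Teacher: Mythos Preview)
Your toolkit is exactly the paper's --- Theorem~\ref{osc 2 alla veronese}, Proposition~\ref{punti in osculanti}, and Example~\ref{non in Split3P2} --- and the overall shape (reduce by contradiction to the five forbidden configurations) is right. But the middle step, which you yourself flag as the obstacle, is a genuine gap, and the paper fills it with an ingredient you do not have.

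First, you miss the short dimension argument that opens the paper's proof and replaces your ``short bookkeeping''. Assuming $D_1=\Lambda_1\cap\Sigma$ and $D_2=\Lambda_2\cap\Sigma$ share at most $n-2$ points, the span $\langle\Lambda_1,\Lambda_2\rangle$ contains at least $n+2$ points of $\Sigma$, hence has dimension at least $n+1$; on the other hand $\dim\langle\Lambda,\Lambda_i\rangle=n$ forces $\dim\langle\Lambda,\Lambda_1,\Lambda_2\rangle\le n+1$. So $\dim\langle\Lambda_1,\Lambda_2\rangle=n+1$, $\Lambda\subset\langle\Lambda_1,\Lambda_2\rangle$, and $D_1,D_2$ share \emph{exactly} $n-2$ points. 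This single paragraph is what makes the subsequent enumeration finite and tractable.

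Second, your dichotomy ``either $D_1,D_2$ share $n-1$ points, or $\Lambda\cap\Sigma$ has length exactly $n-2$ and equals $\gcd(D_1,D_2)$'' is not what Theorem~\ref{osc 2 alla veronese} yields. Cases 2(b) and 2(c) tell you that $\Lambda$ contains $D_i$ minus one or two \emph{full} multiplicities $r_jy_j$, not $D_i$ minus one or two simple points; so the inclusion $\gcd(D_1,D_2)\subseteq\Lambda\cap\Sigma$ does not drop out, and your projection from $\langle\Lambda\cap\Sigma\rangle$ to $\PP4$ is not justified. The paper never asserts ${\rm length}(\Lambda\cap\Sigma)=n-2$; instead it runs the case analysis directly in $\PP{n+2}$: with $\Lambda_1=\langle r_1y_1,\dots,r_ky_k\rangle$ in case 2(c) (resp.\ 2(b)), Proposition~\ref{punti in osculanti} forces the corresponding $r_jy_j$'s into $\Lambda_2$, leaving two (resp.\ four) explicit possibilities for $\Lambda_2$; in each one the containment $\Lambda\subset\langle\Lambda_1,\Lambda_2\rangle$ together with a second application of Theorem~\ref{osc 2 alla veronese} pins $\Lambda$ down as one of the five triple intersections of Example~\ref{non in Split3P2}, contradicting $\Lambda\in\Split_3(\PP n)$. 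The projection to $\PP4$ happens \emph{inside} Example~\ref{non in Split3P2}, from the explicit length-$(n-2)$ subscheme visible in each configuration, not from $\Lambda\cap\Sigma$.

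In short: your plan becomes the paper's proof once you insert the dimension argument and replace the handwaved bookkeeping by the explicit enumeration of $\Lambda_2$'s; those two pieces are the actual content of the lemma.
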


\begin{proof} We assume for contradiction that $\Lambda_1$ and
$\Lambda_2$ have at most $n-2$ points of $\Sigma$ in
common. Therefore $<\Lambda_1,\Lambda_2>$ contains at least $n+2$
points of $\Sigma$. This implies $\dim(<\Lambda_1,\Lambda_2>)\ge
n+1$. On the other hand, since $\dim (<\Lambda
,\Lambda_{1}>)=\dim (<\Lambda,\Lambda_{2}>)=n$, it follows that
$\dim(<\Lambda,\Lambda_1,\Lambda_2>)\le n+1$. As a consequence, 
$\dim(<\Lambda_1,\Lambda_2>)=n+1$, $\Lambda \subset <\Lambda_{1},
\Lambda_{2}>$ and $\Lambda_1$ and $\Lambda_2$ share exactly
$n-2$ points of $\Sigma$. 

We will write $\Lambda_1=<r_1y_1,\dots,r_ky_k>$, with
$r_1+\dots+r_k=n$. Since $\Lambda_1$ and $\Lambda_2$ share
$n-2$ points of
$\Sigma$, then $\Lambda_2$ is obtained by substracting two points to
$r_1y_1,\dots,r_ky_k$ and adding two more, maybe just
substracting or adding some multiplicities to the points. To
simplify the notation, we will include the points of
$\Lambda_2\setminus\Lambda_1$ in
$y_1,\dots,y_k$, so that maybe some $r_i$ (two at most) can be
zero.  From Theorem \ref{osc 2 alla veronese} we know that the
possible cases for $\Lambda_1$ and $\Lambda_2$ are those
described in
\ref{b}) and \ref{c}).

We exclude first the possibility that $\Lambda_1$ is in
case \ref{c}) of Theorem \ref{osc 2 alla veronese}. Otherwise,
up to reordering $r_2y_2,\dots,r_k,y_k\in\Lambda$ and
$\Lambda\subset<(r_1+2)y_1,r_2y_2,r_3y_3,\dots,y_k>$. Using
Proposition \ref{punti in osculanti}, we get
$r_2y_2,\dots,r_k,y_k\in\Lambda_2$. We have now two
possibilities (after probably reordering
$y_1,\dots,y_k$) for $\Lambda_2$, namely
$$<(r_1-2)y_1,(r_2+2)y_2,r_3y_3,r_4y_4,\dots,r_ky_k>$$ 
$$<(r_1-2)y_1,(r_2+1)y_2,(r_3+1)y_3,r_4y_4,\dots,r_ky_k>.$$
This gives the following respective possibilities for
$<\Lambda_1,\Lambda_2>$: 
$$<r_1y_1,(r_2+2)y_2,r_3y_3,r_4y_4,\dots,r_ky_k>$$
$$<r_1y_1,(r_2+1)y_2,(r_3+1)y_3,r_4y_4,\dots,r_ky_k>.$$
Observe that it cannot be $(r_2+1)y_2\in\Lambda$, since
Proposition \ref{punti in osculanti} would imply
$(r_2+1)y_2\in\Lambda_1$. Therefore, by part 2. of Theorem
\ref{osc 2 alla veronese} taking
$\Lambda_0=\Lambda_2$, we have 
$\Lambda\subset<\Lambda_2,(r_2+4)y_2>$ or
$\Lambda\subset<\Lambda_2,(r_2+3)y_2>$, depending on the two
possibilities for $\Lambda_2$. Having also in mind the
inclusion $\Lambda\subset<\Lambda_1,\Lambda_2>$, we get that
$\Lambda$ is contained in one of the following (corresponding
to the two possibilities for $\Lambda_2$):
$$<(r_1+2)y_1,r_2y_2,r_3y_3\dots,r_ky_k>\cap
<r_1y_1,(r_2+2)y_2,r_3y_3,\dots,r_ky_k>\cap
<(r_1-2)y_1,(r_2+4)y_2,r_3y_3,\dots,r_ky_k>$$
$$<(r_1+2)y_1,r_2y_2,r_3y_3,\dots,r_ky_k>\cap
<r_1y_1,(r_2+1)y_2,(r_3+1)y_3,r_4y_4,\dots,r_ky_k>\cap$$
$$\cap<(r_1-2)y_1,(r_2+3)y_2,(r_3+1)y_3,r_4y_4,\dots,r_ky_k>$$
which is a contradiction by Example \ref{non in Split3P2}
(since $\Lambda$ is in two different osculating spaces to $V$,
it necessarily belongs to 
$\Split_3(\PP n)$).

We are thus reduced to the possibility that $\Lambda_1$ is in case
\ref{b}) of Theorem \ref{osc 2 alla veronese}.
Therefore, up to reordering,
$r_3y_3,\dots,r_ky_k\in\Lambda$ and
$\Lambda\subset<(r_1+2)y_1,r_2y_2,r_3y_3,\dots,r_ky_k>\cap<r_1y_1,(r_2+2)y_2,r_3y_3,\dots,r_ky_k>$.
By Proposition \ref{punti in osculanti}, it follows that
$r_3y_3,\dots,r_ky_k\in\Lambda_2$. Hence there are four
possibilities (after probably reordering $y_1,\dots,y_k$) for
$\Lambda_2$, namely
$$<(r_1-2)y_1,(r_2+2)y_2,r_3y_3,r_4y_4,r_5y_5,\dots,r_ky_k>$$ 
$$<(r_1-2)y_1,(r_2+1)y_2,(r_3+1)y_3,r_4y_4,r_5y_5,\dots,r_ky_k>$$
$$<(r_1-1)y_1,(r_2-1)y_2,(r_3+2)y_3,r_4y_4,r_5y_5,\dots,r_ky_k>$$
$$<(r_1-1)y_1,(r_2-1)y_2,(r_3+1)y_3,(r_4+1)y_4,r_5y_5,\dots,r_ky_k>.$$
As before, Proposition \ref{punti in osculanti} implies that it
cannot be
$(r_2+1)y_2\in\Lambda$ or $(r_3+1)y_3\in\Lambda$. Hence, by part 2.
of Theorem \ref{osc 2 alla veronese} applied for
$\Lambda_0=\Lambda_2$ in the four possibilities above we have,
respectively,
$$\Lambda\subset<\Lambda_2,(r_2+4)y_2>=
<(r_1-2)y_1,(r_2+4)y_2,r_3y_3,r_4y_4,\dots,r_k>$$
$$\Lambda\subset<\Lambda_2,(r_2+3)y_2>=
<(r_1-2)y_1,(r_2+3)y_2,(r_3+1)y_3,r_4y_4,\dots,r_k>$$
$$\Lambda\subset<\Lambda_2,(r_3+4)y_3>=
<(r_1-1)y_1,(r_2-1)y_2,(r_3+4)y_3,r_4y_4,\dots,r_k>$$
$$\Lambda\subset<\Lambda_2,(r_3+3)y_3>=
<(r_1-1)y_1,(r_2-1)y_2,(r_3+3)y_3,(r_4+1)y_4,r_5y_5,\dots,r_k>.$$
Since we also have
$\Lambda\subset<(r_1+2)y_1,r_2y_2,r_3y_3,\dots,r_ky_k>
\cap<r_1y_1,(r_2+2)y_2,r_3y_3,\dots,r_ky_k>$,
we get a contradiction from Example \ref{non in Split3P2}.
\end{proof}

\begin{propos}\label{lemma osc} Let $\Lambda \in
(O_{\Lambda_{1}}^{2}(V)\backslash T_{\Lambda_{1}}(V)) \cap
(O_{\Lambda_{2}}^{2}(V)\backslash T_{\Lambda_{2}}(V))\cap
\GG (n-1,n+2)$ for some $\Lambda_{1},\Lambda_{2}\in V$, and assume $\dim
(<\Lambda ,\Lambda_{1}>)=\dim (<\Lambda , \Lambda_{2}>)=n$. If
$\Lambda_1$ and $\Lambda_2$ do have $n-1$ points of $\Sigma$ in
common, also $\Lambda$ contains those points.
\end{propos}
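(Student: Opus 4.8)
The plan is to show that $\Lambda$ contains the length-$(n-1)$ subscheme $Z:=(\Lambda_1\cap\Sigma)\cap(\Lambda_2\cap\Sigma)$ of $\Sigma$; since $\langle Z\rangle$ has dimension $n-2$ and $\Lambda$ has dimension $n-1$, this is equivalent to $\Lambda\cap\Sigma=Z$. One inclusion is immediate: applying Proposition \ref{punti in osculanti} to $\Lambda\in O^2_{\Lambda_1}(V)$ and to $\Lambda\in O^2_{\Lambda_2}(V)$ (here $k=2<3=d$) shows that both $\Lambda_1$ and $\Lambda_2$ contain $\Lambda\cap\Sigma$, whence $\Lambda\cap\Sigma\subseteq Z$. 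Write $D_i:=\Lambda_i\cap\Sigma=Z+p_i$; since $\Lambda_1\ne\Lambda_2$ we have $p_1\ne p_2$. Also, arguing as in Remark \ref{Split-osc}, a point lying in the two distinct second osculating spaces $O^2_{\Lambda_1}(V)$ and $O^2_{\Lambda_2}(V)$ is represented by a form that factors into linear forms, so $\Lambda\in\Split_3(\PP n)$; this will be used only at the end.

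Next I would feed the hypotheses into Theorem \ref{osc 2 alla veronese}. Since $\Lambda\notin T_{\Lambda_i}(V)$ and $\dim\langle\Lambda,\Lambda_i\rangle=n$, for each $i=1,2$ part (2) of that theorem applies and, $\Lambda$ not being in $T_{\Lambda_i}(V)$, only cases (b) or (c) occur. In either case the theorem says that $\Lambda$ contains the span of $D_i$ with the full multiplicity at one support point (case (c)) or at two support points (case (b)) deleted. Intersecting this with $\Lambda\cap\Sigma\subseteq Z$ forces one of the deleted points to be $p_i$: otherwise the multiplicity of $p_i$ in the retained divisor would equal its multiplicity in $D_i$, i.e.\ one more than its multiplicity in $Z$, contradicting the containment in $Z$. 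Hence $\Lambda\cap\Sigma$ contains the subscheme obtained from $Z$ by deleting its $p_i$-part (case (c)), respectively its $p_i$-part and the part at one further point (case (b)).

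If $\Lambda$ is in case (c) with respect to both $\Lambda_1$ and $\Lambda_2$, then $\Lambda\cap\Sigma$ contains both $Z$-without-its-$p_1$-part and $Z$-without-its-$p_2$-part; because $p_1\ne p_2$, the scheme-theoretic union of these two subschemes is $Z$ itself, and since the span of the union is the span of $\langle\cdot\rangle+\langle\cdot\rangle$ we get $\langle Z\rangle\subseteq\Lambda$, as wanted. The same union argument settles every case-(b) configuration except the finitely many in which the extra deleted point of one of the two constraints coincides with $p_{i'}$ or with the extra deleted point of the other (so that some point of the support of $Z$ is deleted by both constraints). For those residual configurations I would additionally use the precise formulas for $\langle\Lambda,\Lambda_i\rangle$ provided by Theorem \ref{osc 2 alla veronese}(2) together with $\Lambda\subseteq\langle\Lambda,\Lambda_1\rangle\cap\langle\Lambda,\Lambda_2\rangle$, and then project $\PP{n+d-1}$ from the span of the common retained subscheme. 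Since that subscheme lies in $\Lambda$, in $\Lambda_1$ and in $\Lambda_2$, the projection lowers all the relevant dimensions by the same amount and yields a rational normal curve of smaller degree carrying data that again satisfy the hypotheses of the proposition, but with a strictly smaller $n$; an induction on $n$ then finishes, the base step taking place in $\PP4$, where the element $\Lambda\in\Split_3(\PP n)$ would have to be one of the subspaces explicitly shown \emph{not} to lie in $\Split_3(\PP2)$ in Example \ref{non in Split3P2} --- a contradiction.

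The main obstacle is precisely this last step: enumerating the residual case-(b) configurations and checking, for each one, that it is either impossible on dimension grounds (because the span equalities in Theorem \ref{osc 2 alla veronese}(2) would force $\dim\langle\Lambda,\Lambda_i\rangle<n$) or, after the projection away from the common retained subscheme, reduces to a strictly smaller instance of the statement and eventually to one of the forbidden $\PP4$-configurations of Example \ref{non in Split3P2}. This is a finite but somewhat intricate bookkeeping, entirely parallel to --- and complementary to --- the proof of Lemma \ref{quasi}.
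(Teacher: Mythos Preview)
Your approach diverges from the paper's and, as you yourself flag, is incomplete at exactly the point where the real work lies. The paper's proof is far shorter because it begins with a linear-algebra observation you overlooked: since $\Lambda_1\cap\Lambda_2=\langle Z\rangle$ has dimension $n-2$ and each $\Lambda\cap\Lambda_i$ also has dimension $n-2$, one has the dichotomy
\[
\text{either } \Lambda\supset\Lambda_1\cap\Lambda_2=\langle Z\rangle,\qquad\text{or }\Lambda\subset\langle\Lambda_1,\Lambda_2\rangle.
\]
The first alternative is the desired conclusion. For the second, the paper uses Theorem~\ref{osc 2 alla veronese} to find $y_1\in\Lambda_1\cap\Sigma$ with $\Lambda\subset\langle\Lambda_1,(r_1+2)y_1\rangle$; since $\Lambda\ne\Lambda_1$, the two $n$-spaces $\langle\Lambda_1,(r_1+2)y_1\rangle$ and $\langle\Lambda_1,\Lambda_2\rangle$ must coincide, forcing the hyperplane $\langle\Lambda_1,(r_1+2)y_1\rangle$ to contain the extra point of $\Lambda_2\cap\Sigma$. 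A count of points of $\Sigma$ on a hyperplane then pins down $y_1$, shows both $\Lambda_i$ are in case~2(c), and the containments from 2(c) give a contradiction directly. No case-(b) bookkeeping, no induction, no appeal to Example~\ref{non in Split3P2}.

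Your route---extracting the retained subschemes from cases (b)/(c) and taking their union---is correct up through the (c)--(c) case, but the ``residual'' (b) configurations are a genuine gap, not just bookkeeping. Your proposed induction is not well-founded as stated: you have not checked that projecting from the common retained subscheme preserves the hypotheses $\Lambda'\in O^2_{\Lambda_i'}(V')\setminus T_{\Lambda_i'}(V')$ and $\dim\langle\Lambda',\Lambda_i'\rangle=n'$, nor that the base case in $\PP4$ lands on one of the five specific configurations of Example~\ref{non in Split3P2} rather than on some other line. The paper's dichotomy sidesteps all of this.
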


\begin{proof} 
Since, by hypothesis, the intersection of $\Lambda_1$
and $\Lambda_2$ has dimension $n-2$, and also the intersection
of $\Lambda$ with each of them has dimension $n-2$, it follows
that there are two possibilities:

--Either $\Lambda$ contains the intersection of
$\Lambda_1,\Lambda_2$, hence their $n-1$ common points of
$\Sigma$.

--Or $\Lambda$ is contained in the $n$-dimensional span of
$\Lambda_1,\Lambda_2$. By Theorem \ref{osc 2 alla veronese}, in
any case there exists $y_1\in\Sigma\cap\Lambda_1$ such that
$\Lambda\subset<\Lambda_1,(r_1+2)y_1>$, where $r_1$ is the
intersection multiplicity at $y_1$ of $\Sigma$ and $\Lambda_1$.
Hence
$\Lambda\subset<\Lambda_1,(r_1+2)y_1>\cap<\Lambda_1,\Lambda_2>$.
Since $\Lambda\ne\Lambda_1$, necessarily
$<\Lambda_1,(r_1+2)y_1>$ contains
$<\Lambda_1,\Lambda_2>$, in particular the point of
$\Lambda_1\cap\Sigma$ that is not in $\Lambda_2$. Since the
hyperplane
$<\Lambda_1,(r_1+2)y_1>\subset\PP{n+2}$ cannot $n+3$ different
point of $\Sigma$, it follows that $(r_1+1)y_1\in\Lambda_2$. We
cannot have another $y'_1\ne y_1$ in $\Sigma\cap\Lambda_1$ such
that $\Lambda\subset<\Lambda_1,(r'_1+2)y'_1>$, because
the same reasoning would show $(r'_1+1)y'_1\in\Lambda_2$, which
contradicts the fact that $\Lambda_1$ and $\Lambda_2$ share
$n-1$ points of $\Sigma$. Therefore $\Lambda_1$ is in case 2.(c)
of Theorem \ref{osc 2 alla veronese}. The same reasoning for
$\Lambda_2$ shows that there exists $y_2\in\Sigma\cap\Lambda_2$
with multiplicity $r_2$ and such that $(r_2+1)y_2\in\Lambda_1$.
Moreover, $\Lambda_2$ is also in case 2.(c)
of Theorem \ref{osc 2 alla veronese}. But then, using again the
part 2.(c) of Theorem \ref{osc 2 alla veronese}, we deduce that
$\Lambda$ should contain $(r_1+1)y_1,(r_2+1)y_2$ and the other
$n-r_1-r_2$ common points of $\Sigma$, which is a contradiction.
\end{proof}

\begin{theorem} \label{split3conGrass} The intersection
between $\Split_{3} (\PP n)$ and $\GG (n-1, n+2)$ is 
$$\Split_{3}(\PP n)\cap \GG (n-1,n+2)=X_{n+1}\cup X_{n+2}$$
where
$$X_{n+1}=\{<Z+2y_1+2y_2>\cap<Z+2y_1+2y_3>\cap<Z+2y_2+2y_3>\ |\
Z\subset\Sigma,\  {\rm length}(Z)=n-2,\ y_1,y_2,y_3\in\Sigma\}$$
$$X_{n+2}=\{\Lambda\subset\GG(n-1,n+3)\ |\ 
{\rm length}(\Lambda\cap\Sigma)\ge n-1\}.$$
\end{theorem}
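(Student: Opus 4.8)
The plan is to prove the two inclusions separately, using the decomposition of $\Split_3(\PP n)$ from Remark \ref{Split-osc} as $\tau(V)\cup\bigcup O^2_{\Lambda_1}(V)\cap O^2_{\Lambda_2}(V)\cap O^2_{\Lambda_3}(V)$. For the inclusion $X_{n+1}\cup X_{n+2}\subseteq\Split_3(\PP n)\cap\GG(n-1,n+2)$, the containment $X_{n+1}\subseteq\Split_3(\PP n)$ is exactly Proposition \ref{Xn+1 in Split}, and $X_{n+2}\subseteq\Split_3(\PP n)$ is Proposition \ref{un contenimento} together with the observation (Lemma \ref{contained}) that an $(n-1)$-secant space to $\Sigma$ comes from a form in $K[N_0,N_1]_3$, hence splits; both are already subsets of $\GG(n-1,n+2)$ by construction. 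So the real content is the reverse inclusion.

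For $\Split_3(\PP n)\cap\GG(n-1,n+2)\subseteq X_{n+1}\cup X_{n+2}$, I would take $\Lambda\in\GG(n-1,n+2)$ representing a point of $\Split_3(\PP n)$ and split into the two cases of Remark \ref{Split-osc}. \textbf{Case 1: $\Lambda\in\tau(V)$.} Then $\Lambda\in T_{\Lambda_0}(V)\cap\GG(n-1,n+2)$ for some $\Lambda_0\in V$, so by Theorem \ref{spazio tg alla veronese} (or Proposition \ref{grazie}), $\Lambda$ shares with $\Lambda_0$ a subscheme $Z\subset\Sigma$ of length $n-1$; thus $\mathrm{length}(\Lambda\cap\Sigma)\ge n-1$ and $\Lambda\in X_{n+2}$. \textbf{Case 2: $\Lambda\in O^2_{\Lambda_1}(V)\cap O^2_{\Lambda_2}(V)\cap O^2_{\Lambda_3}(V)$} with the $\Lambda_i\in V$ distinct, and (after discarding Case 1) we may assume $\Lambda\notin T_{\Lambda_i}(V)$ for each $i$. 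Now apply Theorem \ref{osc 2 alla veronese} to each pair $(\Lambda,\Lambda_i)$. Here one subdivides according to whether $\dim\langle\Lambda,\Lambda_i\rangle$ is $n$ or $n+1$.

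The heart of the argument — and the step I expect to be the main obstacle — is the bookkeeping that combines the information from the three osculating spaces. If for some (hence, after checking, for a pair of) indices $\dim\langle\Lambda,\Lambda_i\rangle=n+1$, Proposition \ref{finalmente} applies and directly yields the description of $\Lambda$ as a triple intersection $\langle(s_1+2)y_1,(s_2+2)y_2,s_3y_3,\dots\rangle\cap\cdots$, i.e.\ $\Lambda\in X_{n+1}$ (with $Z=\langle s_1y_1,\dots,s_ky_k\rangle$ of length $n-2$); one must check the combinatorial consistency across all three $\Lambda_i$, using that a degree-three form lying in two distinct osculating spaces already splits, so no contradiction with Example \ref{non in Split3P2} arises. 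The remaining case is $\dim\langle\Lambda,\Lambda_i\rangle=n$ for all $i$: then Lemma \ref{quasi} forces each pair $\Lambda_i,\Lambda_j$ to share $n-1$ points of $\Sigma$, and Proposition \ref{lemma osc} then forces $\Lambda$ to contain those $n-1$ common points — but iterating this over the three pairs and using that the $\Lambda_i$ are distinct $n$-secant spaces, one finds $\Lambda$ must contain a length-$(n-1)$ subscheme of $\Sigma$, so $\Lambda\in X_{n+2}$; alternatively, one shows this case is vacuous except when it collapses into Case 1. Carefully excluding the mixed sub-possibilities of Theorem \ref{osc 2 alla veronese} (parts 2(a), 2(b), 2(c)) via Proposition \ref{punti in osculanti} and Example \ref{non in Split3P2} is exactly the kind of case-chase carried out in Lemma \ref{quasi} and Proposition \ref{lemma osc}, so the proof is mostly an assembly of those lemmas; the delicate point is making sure every configuration of the three chosen Veronese points $\Lambda_1,\Lambda_2,\Lambda_3$ is accounted for and lands in $X_{n+1}$ or $X_{n+2}$.
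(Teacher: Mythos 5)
Your proposal follows essentially the same route as the paper: the forward inclusion via Proposition \ref{Xn+1 in Split} and Proposition \ref{un contenimento}, and the reverse inclusion by splitting via Remark \ref{Split-osc}, handling $\tau(V)$ through Corollary \ref{tau and grass}, and then combining Proposition \ref{finalmente} (spans of dimension $n+1$) with Lemma \ref{quasi} and Proposition \ref{lemma osc} (spans of dimension $n$). The only cosmetic difference is that the paper organizes the final case split by the pigeonhole observation that among the three $\Lambda_i$ at least two must give spans of equal dimension, which cleanly replaces your slightly hedged partition into ``some index gives $n+1$'' versus ``all give $n$''.
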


\begin{proof} We have $X_{n+1}\subset\Split_{3} (\PP n)$ by
Proposition \ref{Xn+1 in Split} and
$X_{n+2}\subset\Split_{3} (\PP n)$ by Corollary \ref{un
contenimento}. Hence
$X_{n+1}\cup X_{n+2}\subset \Split_{3}(\PP n)\cap \GG
(n-1,n+2)$.

Reciprocally, let $\Lambda\in\Split_{3}(\PP n)\cap \GG
(n-1,n+2)$. By Remark \ref{Split-osc}, either
$\Lambda\in\tau(V)\cap \GG(n-1,n+2)$ or $\Lambda\in
O^{2}_{\Lambda_{1}}(V)\cap O^{2}_{\Lambda_{2}}(V)\cap
O^{2}_{\Lambda_{3}}(V)$ for different subspaces
$\Lambda_1,\Lambda_2,\Lambda_3\in\GG(n-1,n+2)$. In the first
case, by Corollary \ref{tau and grass}, $\Lambda$ contains at
least $n-1$ points of $\Sigma$, so that $\Lambda\in X_{n+2}$.
We will thus assume $\Lambda\not\in\tau(V)$ and $\Lambda\in
O^{2}_{\Lambda_{1}}(V)\cap O^{2}_{\Lambda_{2}}(V)\cap
O^{2}_{\Lambda_{3}}(V)$. Theorem \ref{osc 2 alla veronese}
implies that the span of $\Lambda$ with each $\Lambda_i$ has
dimension $n+1$ or $n$. Hence for at least two of the subspaces,
say $\Lambda_1,\Lambda_2$, the dimensions of
$<\Lambda,\Lambda_1>$ and $<\Lambda,\Lambda_2>$ are the same. We
study separately the different possibilities:

If $\dim(<\Lambda,\Lambda_1>)=\dim(<\Lambda,\Lambda_2>)=n+1$,
by Proposition \ref{finalmente}, we have $\Lambda\in X_{n+1}$.

If $\dim(<\Lambda,\Lambda_1>)=\dim(<\Lambda,\Lambda_2>)=n$, by
Lemma \ref{quasi} it follows that $\Lambda_1,\Lambda_2$ have
$n-1$ points of $\Sigma$ in common, so that we are done by
Proposition \ref{lemma osc}.

\end{proof}


\section{Appendix}\label{cancellabile}

In this appendix we want to explore the following problem: is it
possible to detect when the $s$-th secant variety to $\Split_{d}(\PP
n)$ fills up the whole ambient space by just detecting when its
intersection with $\GG (n-1,n+d-1)$ is the whole Grassmannian? 

To test the validity of this method, one could replace
$\Split_{d}(\PP n)$ with $\nu_{d}(\PP n)$, for which the
dimensions of all secant varieties are known (see \cite{AH}). We
will see that in fact, the method perfectly works for $d=2$ and
any secant variety, and give some partial answer for any $d$ and
the second secant variety.

\begin{propos}\label{Grass e sec} The intersection between the
Grassmannian $\GG (n-1,n+1)$ and the variety
$\Sec_{r-1}(\nu_{2}(\PP n))$ 
is the set of all $(n-1)$-spaces of $\PP {n+1}$
that are $(n-r+1)$-secant to the rational normal curve
$\Sigma\subset\PP{n+1}$.
\end{propos}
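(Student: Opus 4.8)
The plan is to reduce everything to a single rank identity. Recall the classical description $\Sec_{r-1}(\nu_2(\PP n))=\{[Q]\in\PP{}(R_2):\rk(Q)\le r\}$, where $\rk(Q)$ is the rank of $Q$ as a quadratic form (equivalently of its $(n+1)\times(n+1)$ symmetric matrix): the right--hand side is the determinantal variety defined by the $(r+1)\times(r+1)$ minors of the generic symmetric matrix, and it is classical that it coincides with the $(r-1)$--st secant variety of the quadratic Veronese. Thus it suffices to prove that, for $\Lambda\in\GG(n-1,n+1)$ with associated quadric $Q_\Lambda\in R_2$ under the identification of Remark \ref{identificazione},
$$\rk(Q_\Lambda)=(n+1)-\mathrm{length}(\Lambda\cap\Sigma).$$
Granting this, $\Lambda\in\Sec_{r-1}(\nu_2(\PP n))\cap\GG(n-1,n+1)$ iff $\rk(Q_\Lambda)\le r$ iff $\mathrm{length}(\Lambda\cap\Sigma)\ge n-r+1$, i.e. iff $\Lambda$ is $(n-r+1)$--secant to $\Sigma$, which is the assertion.

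To prove the rank identity, write $\Lambda$ as the intersection of two hyperplanes $\sum_ia_iz_i=0$, $\sum_ib_iz_i=0$ of $\PP {n+1}$ in the coordinates of Remark \ref{identificazione}, and attach to $a=(a_0,\dots,a_{n+1})$, $b=(b_0,\dots,b_{n+1})$ the binary forms $F_a=\sum_ia_it_0^{\,n+1-i}t_1^{\,i}$ and $F_b=\sum_ib_it_0^{\,n+1-i}t_1^{\,i}$ in $K[t_0,t_1]_{n+1}$. Since $\Sigma$ is parametrized by $z_i=t_0^{\,n+1-i}t_1^{\,i}$, pulling the two hyperplanes back to $\PP1$ gives exactly $F_a$ and $F_b$, so $\Lambda\cap\Sigma$ is the zero scheme of $\gcd(F_a,F_b)$ and $\mathrm{length}(\Lambda\cap\Sigma)=\deg\gcd(F_a,F_b)$. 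Next I would show that the symmetric matrix of $Q_\Lambda$ is the $(n+1)\times(n+1)$ B\'ezout matrix $\mathrm{Bez}(F_a,F_b)$, i.e. the matrix of the bilinear form $(t,s)\mapsto\bigl(F_a(t)F_b(s)-F_a(s)F_b(t)\bigr)/(t_0s_1-t_1s_0)$. Expanding the numerator one sees that this B\'ezout matrix is a fixed linear combination $\sum_{i<j}p_{ij}M^{(ij)}$ of the Pl\"ucker coordinates $p_{ij}=a_ib_j-a_jb_i$, with $M^{(ij)}$ independent of $\Lambda$; since the identification of Remark \ref{identificazione} is likewise the linear map sending Pl\"ucker coordinates to coefficients of $Q_\Lambda$, it is enough to check that the two linear maps agree on the Veronese $\nu_2(\PP n)$, which spans $\PP{}(R_2)$. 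But for $\Lambda=\nu_2([L])$ with $L=\sum u_ix_i$ the two hyperplanes in \eqref{vero in grass} pull back to $F_a=t_0L$, $F_b=t_1L$, and a one--line computation gives $\mathrm{Bez}(t_0L,t_1L)(t,s)=L(t)L(s)$, whose matrix is $uu^{T}$ -- exactly the matrix of $[L^2]=Q_{\nu_2([L])}$. The identity $\rk(Q_\Lambda)=(n+1)-\deg\gcd(F_a,F_b)$ is then the classical theorem on B\'ezoutians (the B\'ezout matrix of two binary forms of degree $\le n+1$ has rank $(n+1)-\deg\gcd$, its kernel being governed by the g.c.d.).

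The inclusion $\supseteq$ can alternatively be obtained in the spirit of the paper, without the rank identity: if $\Lambda$ contains a length-$(n-r+1)$ subscheme $Z\subset\Sigma$, then by Lemma \ref{contained} the point $\Lambda\in\PP{}(R_2)$ lies in the linear subspace $\PP{}(K[N_0,\dots,N_{r-1}]_2)$, i.e. $Q_\Lambda$ is a quadratic form in the $r$ variables $N_0,\dots,N_{r-1}$; over the algebraically closed field $K$ such a form is a sum of at most $r$ squares $M_j^{2}$ with $M_j\in K[N_0,\dots,N_{r-1}]_1\subset R_1$, and each $[M_j^{2}]$ lies on $\nu_2(\PP n)$, whence $\Lambda\in\Sec_{r-1}(\nu_2(\PP n))$. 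I expect the only real work to be the identification $Q_\Lambda=\mathrm{Bez}(F_a,F_b)$ and the attendant bookkeeping -- matching the scheme-theoretic multiplicities in $\Lambda\cap\Sigma$ with $\deg\gcd(F_a,F_b)$, and pinning down the normalization in Remark \ref{identificazione}, where the binomial factors $\binom{2}{\alpha}$ enter because $\nu_2$ is written $[L]\mapsto[L^2]$. The two external inputs (the determinantal description of $\Sec_{r-1}(\nu_2(\PP n))$ and the rank formula for B\'ezout matrices) are classical and can simply be quoted.
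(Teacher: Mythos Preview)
Your proposal is correct and, at the level of underlying mathematics, coincides with the paper's argument. The paper establishes the inclusion $\supseteq$ exactly as in your alternative paragraph (via Lemma~\ref{contained}), and for $\subseteq$ it writes down explicitly the symmetric matrix $Q$ attached to the Pl\"ucker coordinates $p_{ij}$ and asserts that $\rk(Q)\le r$ is equivalent to the system $A\,(t_0^{n+1},\dots,t_1^{n+1})^T=0$ having at least $n-r+1$ solutions---which is precisely the statement $\deg\gcd(F_a,F_b)\ge n-r+1$. That matrix $Q$ \emph{is} the B\'ezout matrix $\mathrm{Bez}(F_a,F_b)$, so the two proofs are the same computation.

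The genuine value you add is the recognition of $Q_\Lambda$ as a B\'ezoutian: this lets you replace the paper's ``it is then possible to verify'' by a citation of the classical rank formula $\rk\,\mathrm{Bez}(F_a,F_b)=(n+1)-\deg\gcd(F_a,F_b)$, and your verification that the identification of Remark~\ref{identificazione} sends $\Lambda\mapsto\mathrm{Bez}(F_a,F_b)$ (checked on the spanning Veronese via $\mathrm{Bez}(t_0L,t_1L)=L(t)L(s)$) is a clean way to pin down the linear isomorphism without writing out the entries. Your packaging is more conceptual and arguably more complete; the paper's is more hands-on but leaves the key equivalence as an exercise.
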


\begin{proof} Assume first that a subspace
$\Lambda\subset\PP{n+1}$ contains a subscheme $Z\subset\Sigma$ of
length $n-r+1$. By Lemma \ref{contained}, we can find linear forms
$N_0,\dots,N_{r-1}\in K[X_0,\dots,X_n]$ such that $\Lambda$, as an
element of $\PP{}(K[X_0,\dots,X_n]_2)$ lies in
$\PP{}(K[N_0,\dots,N_{r-1}]_2)$. But now the $r$-th secant
variety of $\nu_2(\PP{}(K[N_0,\dots,N_{r-1}]_1)$ is the whole
$\PP{}(K[N_0,\dots,N_{r-1}]_2)$. Thus necessarily $\Lambda$
belongs to $\Sec_{r-1}(\nu_{2}(\PP n))$.

We just sketch the proof of the other inclusion (although the case
$r=2$ is an immediate consequence of Corollary \ref{tau and
grass}). The main idea for the proof is that, since $d=2$, the
Pl\"ucker space of $\GG (n-1,n+1)$ can be identified with the
space of classes of skew-symmetric matrices of order $n+2$, while
the space of homogeneous polynomials of degree two in $n+1$
variables can be regarded as the space of symmetric matrices of
order $n+1$. In this language, one can write down explicitly the
identification of these two spaces. Specifically, to any
skew-symmetric matrix 
\begin{equation*}A=\left(
\begin{array}{ccccc}
 0& p_{0,1} & \cdots &  p_{0,n+1} \\
  -p_{0,1} & 0 & \cdots  & p_{1,n+1} \\
  \vdots &  & \ddots  &  \vdots \\
  -p_{0,n+1} &- p_{1,n+1} & \cdots & 0 \\
\end{array}
\right).\end{equation*}
the corresponding symmetric matrix is
\begin{equation*}Q=\left(
\begin{array}{ccccc}
  p_{0,1} & p_{0,2} & p_{0,3} & \cdots &  p_{0,n+1} \\
  p_{0,2} & p_{1,2}+p_{0,3} & p_{1,3}+p_{0,4} & \cdots  & p_{1,n+1} \\
  p_{0,3} & p_{1,3}+p_{0,4} & p_{2,3}+p_{1,4}+p_{0,5} & \cdots & p_{2,n+1}  \\
  \vdots & \vdots  &  & &  \vdots \\
  p_{0,n+1} & p_{1,n+1} & \cdots &  \cdots & p_{n,n+1} \\
\end{array}
\right).\end{equation*}
Take then $\Lambda\in\GG(n-1,n+1)$ represented by a rank-two
matrix $A$ as above. If it belongs to $\Sec_{r-1}(\nu_{2}(\PP
n))$, this means that the corresponding matrix $Q$ has rank at
most $r$. It is then possible to verify that this is equivalent to
the fact that the system
$$A\left(\begin{array}{c}
  t_0^{n+1} \\ t_0^{n}t_1 \\ \vdots  \\ t_1^{n+1} \\
\end{array}\right)=
\left(\begin{array}{c}
  0 \\ 0 \\ \vdots  \\ 0 \\
\end{array}\right)$$
admits at least $n-r+1$ solutions in 
$\PP 1$, counted with multiplicity.
It follows that $A$ describes an
$(n-1)$-space of $\PP {n+1}$ that is $(n-r+1)$-secant to 
$\Sigma$.
\end{proof}

\begin{corol}\label{int2} The intersection between
$\Sec_{s-1}(\Split_2(\PP n))$ and $\GG (n-1,n+1)$ is
set-theoretically the locus $\{ \Lambda \in \GG (n-1,n+1)
\; | \; \Lambda \hbox{ is } (n-2s+1)-\hbox{secant to }
\nu_{n+1}(\PP 1) \}$. 
\end{corol}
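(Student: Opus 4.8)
The plan is to deduce the statement from Proposition~\ref{Grass e sec} after identifying $\Sec_{s-1}(\Split_2(\PP n))$ with a suitable secant variety of the Veronese.

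First I would recall the matrix picture established in the proof of Proposition~\ref{Eh true}: under the identification of $\PP{}(R_2)$ with the projectivization of the space of symmetric $(n+1)\times(n+1)$ matrices, one has
$$\Sec_{s-1}(\Split_2(\PP n))\simeq\{M\in M_{n+1}(K)\ |\ M=M^T,\ \rk(M)\le 2s\},$$
and, by the same reasoning applied to $\nu_2(\PP n)$ (the locus of rank-one symmetric matrices), the span of $2s$ general points of $\nu_2(\PP n)$ is again, set-theoretically, the locus of symmetric matrices of rank at most $2s$. Hence
$$\Sec_{s-1}(\Split_2(\PP n))=\Sec_{2s-1}(\nu_2(\PP n))$$
as subsets of $\PP{}(R_2)$.

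Then I would simply intersect both sides with $\GG(n-1,n+1)$ and apply Proposition~\ref{Grass e sec} with $r=2s$: this says that $\Sec_{2s-1}(\nu_2(\PP n))\cap\GG(n-1,n+1)$ is exactly the set of $(n-1)$-spaces of $\PP{n+1}$ that are $(n-2s+1)$-secant to the rational normal curve $\Sigma=\nu_{n+1}(\PP 1)$, which is the claimed description. The only points needing a word of care are the bookkeeping of the index shift $r=2s$ and the degenerate range $2s\ge n+1$, where $\Sec_{s-1}(\Split_2(\PP n))$ already fills the whole $\PP{}(R_2)$ and its intersection with $\GG(n-1,n+1)$ is the whole Grassmannian, consistently with reading ``$(n-2s+1)$-secant'' as an empty condition. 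There is no genuine obstacle here: all the geometric content is already contained in Proposition~\ref{Grass e sec}, and this corollary is just its translation through the $d=2$ matrix identification together with the elementary equality of secant varieties recorded above.
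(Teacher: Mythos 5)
Your argument is correct and is essentially the paper's own proof: the paper likewise identifies $\Sec_{s-1}(\Split_2(\PP n))$ with the symmetric matrices of rank at most $2s$ (i.e.\ with $\Sec_{2s-1}(\nu_2(\PP n))$) and then invokes Proposition~\ref{Grass e sec} with $r=2s$. Your extra remark on the degenerate range $2s\ge n+1$ is a harmless refinement not made explicit in the paper.
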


\begin{proof} This is a consequence of the previous proposition
and of the observation that, since $\Split_2(\PP n)=\{ Q\in
M_{n+1}(K)
\hbox{ s.t. $Q$ is symmetric and } \rk (Q)=2\}$ and the elements
of $\Split_{2}(\PP n)$  are of the form 
$[L_1\cdot L_2]$ with $L_1,L_2\in R_1$, then
$\Sec_{s-1}(\Split_2(\PP n))=\{ [L_1L_2+\cdots
+L_{2s-1}L_{2s}]\in \PP {}(R_2) \; | \; L_i\in R_1 \hbox{ for }
i=1,\ldots ,2s \}$ is the set of all symmetric matrices of
$M_{n+1}(K)$ of rank at most $2s$.
\end{proof}

\begin{rem}{\rm
Observe that, the previous results show that the technique
proposed at the beginning of this appendix works for $\nu_{2}(\PP
n)$ and $\Split_{2}(\PP n)$. Indeed, $\Sec_{r-1}(\nu_{2}(\PP
n))=\PP{n(n+3)\over2}$ if and only if $r\ge n+1$, which is
equivalent (by Proposition \ref{Grass e sec}) to
$\Sec_{r-1}(\nu_{2}(\PP n))\cap\GG(n-1,n+1)=\GG(n-1,n+1)$.
Similarly, $\Sec_{s-1}(\Split_{2}(\PP
n))=\PP{n(n+3)\over2}$ if and only if $s\ge\frac{n+1}{2}$ (because
$\Sec_{s-1}(\Split_{2}(\PP n))$ can be interpreted as the space of
symmetric matrices of rank at most $2s$) and this is equivalent
(by Corollary \ref{int2}) to $\Sec_{s-1}(\Split_{2}(\PP
n))\cap\GG(n-1,n+1)=\GG(n-1,n+1)$.
}\end{rem}

We end by presenting some generalizations of Proposition
\ref{Grass e sec}. We need some preliminary results.

\begin{lemma} \label{rette in Grass} Let  $\Lambda_{1},
\Lambda_{2}\in \nu_d(\PP n)$ such that the line spanned by them
is contained in $\GG (n-1,n+d-1)$. Then
$\Lambda_{1}$ and $\Lambda_{2}$ share at least $n-1$ points of
$\Sigma$.
\end{lemma}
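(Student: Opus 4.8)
The plan is to use the classical fact that a line lying on a Grassmannian, inside its Pl\"ucker embedding, is always a pencil of subspaces. First I would recall that if $\ell\subset\GG(n-1,n+d-1)$ is a line, then there exist linear subspaces $A\subset B\subset\PP{n+d-1}$ with $\dim A=n-2$ and $\dim B=n$ such that $\ell=F(A,B)$. Applying this to the line through $\Lambda_1$ and $\Lambda_2$, we conclude that $B:=\langle\Lambda_1,\Lambda_2\rangle$ is an $n$-dimensional linear subspace of $\PP{n+d-1}$ containing both of them (equivalently, $\dim(\Lambda_1\cap\Lambda_2)=n-2$).

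Next, by part (ii) of Lemma \ref{nu_d(Pn)intG(n-1,n+d-1)}, each $\Lambda_i\in\nu_d(\PP n)$, viewed as an $(n-1)$-plane in $\PP{n+d-1}$, is the span of an effective divisor $Z_i$ of degree $n$ on the rational normal curve $\Sigma$, and in fact $Z_i=\Lambda_i\cap\Sigma$ as schemes. Identifying $\Sigma$ with $\PP{}(K[t_0,t_1]_1)$, write $g_i\in K[t_0,t_1]_n$ for the binary form cutting out $Z_i$. Since $Z_1,Z_2\subset B$, the subscheme of $\Sigma$ cut out by $\mathrm{lcm}(g_1,g_2)$ --- the scheme-theoretic union $Z_1\cup Z_2$ --- is again contained in $B$, hence in $B\cap\Sigma$.

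Then I would bound $\deg(B\cap\Sigma)$ from above. Because $\Sigma$ is a rational normal curve of degree $n+d-1$ in $\PP{n+d-1}$ and $d\ge2$, every length-$(n+2)$ subscheme of $\Sigma$ spans a $\PP{n+1}$; since such a $\PP{n+1}$ cannot fit inside the $n$-dimensional $B$, we get $\deg(B\cap\Sigma)\le n+1$. Writing $Z_1\cap Z_2$ for the schematic intersection, cut out by $\gcd(g_1,g_2)$, and using $\deg(Z_1\cup Z_2)+\deg(Z_1\cap Z_2)=\deg Z_1+\deg Z_2=2n$, we obtain
$$2n-\deg(Z_1\cap Z_2)=\deg(Z_1\cup Z_2)\le\deg(B\cap\Sigma)\le n+1,$$
so $\deg(Z_1\cap Z_2)\ge n-1$. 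This is exactly the assertion that $\Lambda_1$ and $\Lambda_2$ share at least $n-1$ points of $\Sigma$, counted with multiplicity.

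There is no serious obstacle here; the only external ingredient is the classification of lines on a Grassmannian used in the first step. The points that need a little care are all routine bookkeeping on $\Sigma\cong\PP1$: translating ``$\Lambda_i$ is $n$-secant to $\Sigma$'' into the degree-$n$ divisor $Z_i=\mathrm{div}(g_i)$, checking that the scheme-theoretic union $\mathrm{div}(\mathrm{lcm}(g_1,g_2))$ is still contained in the linear space $B$, and invoking the linear general position property of rational normal curves (which is where the hypothesis $d\ge2$ enters, guaranteeing $\Sigma\not\subset B$) to get $\deg(B\cap\Sigma)\le n+1$.
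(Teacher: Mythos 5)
Your argument is correct and follows the same route as the paper: a line in the Grassmannian is a pencil, so $<\Lambda_1,\Lambda_2>$ is $n$-dimensional, and since an $n$-dimensional space can meet the rational normal curve $\Sigma\subset\PP{n+d-1}$ in a scheme of length at most $n+1$ while each $\Lambda_i$ cuts out a length-$n$ subscheme, the two must share a subscheme of length at least $n-1$. Your write-up merely makes explicit (via the lcm/gcd bookkeeping on binary forms) the step the paper dismisses with ``the result follows readily.''
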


\begin{proof} Since the line spanned by $\Lambda_{1},
\Lambda_{2}$
is contained in $\GG (n-1,n+d-1)$, they belong to a pencil of
subspaces. Hence the span of $\Lambda_{1},
\Lambda_{2}$ in $\PP{n+d-1}$ is a linear space of dimension $n$.
The hypothesis $\Lambda_{1}, \Lambda_{2}\in \nu_d(\PP n)$, implies
that $\Lambda_{1},
\Lambda_{2}$ contain each $n$ points of $\Sigma$. Since
$<\Lambda_{1},
\Lambda_{2}>$ can contain at most $n+1$ points of $\Sigma$, the
result follows readily.
\end{proof}

\begin{propos} \label{forme binarie} Let $N_0,N_1$ be two
linear forms of $K[x_{0}, \ldots ,x_{n}]$;
then $\GG (n-1,n+2) \cap\PP{}(K[N_0,N_1]_3)= \{ \Lambda \in
\GG(n-1, n+2)\ \ | \; \deg(\Lambda \cap \Sigma) \geq
n-1\}$.
\end{propos}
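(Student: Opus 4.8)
The plan is to read the statement off from Lemma~\ref{contained}, using Theorem~\ref{split3conGrass} to dispose of the easy inclusion. The first observation is the tautological containment $\PP{}(K[N_0,N_1]_3)\subseteq\Split_3(\PP n)$: since $K$ is algebraically closed, every cubic form in the two variables $N_0,N_1$ is a product of three linear forms, each lying in the pencil $\langle N_0,N_1\rangle$. Hence
$$\GG(n-1,n+2)\cap\PP{}(K[N_0,N_1]_3)\subseteq\Split_3(\PP n)\cap\GG(n-1,n+2)=X_{n+1}\cup X_{n+2},$$
the last equality being Theorem~\ref{split3conGrass}. I would then rule out $X_{n+1}$: by the homogeneity argument behind Proposition~\ref{Xn+1 in Split}, after projecting from the length‑$(n-2)$ subscheme of $\Sigma$ that it contains, a point of $X_{n+1}$ is represented by a cubic projectively equivalent to $x_1(x_0-x_1)(x_1-x_2)$, whose three linear factors span a $3$‑dimensional subspace of $R_1$. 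Such a cubic cannot lie in $K[N_0,N_1]_3$, all of whose members have their factors inside the $2$‑dimensional space $\langle N_0,N_1\rangle$. This already gives $\GG(n-1,n+2)\cap\PP{}(K[N_0,N_1]_3)\subseteq X_{n+2}=\{\Lambda:\deg(\Lambda\cap\Sigma)\ge n-1\}$, i.e. the inclusion ``$\subseteq$''.

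For the reverse inclusion I would invoke Lemma~\ref{contained} with $s=n-1$. One writes the pencil $\langle N_0,N_1\rangle\subset R_1$ in the normal form of that lemma, namely as the set of multiples of a single binary form $p\in K[t_0,t_1]_{n-1}$; letting $Z\subset\Sigma$ be its length‑$(n-1)$ scheme of zeros, Lemma~\ref{contained} identifies $\PP{}(K[N_0,N_1]_3)$ with the Pl\"ucker ambient space of $G'=\{\Lambda\in\GG(n-1,n+2):\Lambda\cap\Sigma\supseteq Z\}$, and for $s=n-1$ (see the remark immediately after that lemma) $G'$ fills up that $\PP3$. Since $G'\subseteq\GG(n-1,n+2)$, this forces $\PP{}(K[N_0,N_1]_3)\subseteq\GG(n-1,n+2)$ — equivalently, the Pl\"ucker relations restrict identically to zero on the coordinate subspace cut out by the equations \eqref{vero in grass} — so that $\GG(n-1,n+2)\cap\PP{}(K[N_0,N_1]_3)=G'$. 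Every $\Lambda\in G'$ contains $Z$, hence $\deg(\Lambda\cap\Sigma)\ge n-1$; conversely any $(n-1)$‑space $\Lambda$ with $\deg(\Lambda\cap\Sigma)\ge n-1$ contains some length‑$(n-1)$ subscheme of $\Sigma$, and feeding that subscheme back into Lemma~\ref{contained} places $\Lambda$ inside the corresponding $\PP{}(K[N_0,N_1]_3)$, closing the circle.

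The step I expect to be the main obstacle is precisely this last dictionary between $\{N_0,N_1\}$ and the subscheme $Z$. For a fixed pencil the intersection computed above is $G'=\{\Lambda:\Lambda\cap\Sigma\supseteq Z\}$ for the particular $Z$ attached to $N_0,N_1$, so the equality in the statement has to be understood with $Z$ fixed accordingly (equivalently, as an identity of loci once $N_0,N_1$ range over all admissible pencils, i.e. those consisting of multiples of a common binary $(n-1)$‑form). Making this correspondence precise, and checking directly — by tracking the $3\times3$ minors in \eqref{vero in grass} — that every Pl\"ucker relation vanishes on $K[N_0,N_1]_3$, is the computational heart of the argument; once Lemma~\ref{contained} and Theorem~\ref{split3conGrass} are available, the rest is formal.
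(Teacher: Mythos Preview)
Your argument is correct but uses heavier machinery than the paper. You deduce the inclusion $\GG(n-1,n+2)\cap\PP{}(K[N_0,N_1]_3)\subseteq X_{n+2}$ from Theorem~\ref{split3conGrass}, then eliminate $X_{n+1}$ by noting that its points correspond to cubics whose linear factors span a three-dimensional subspace of $R_1$ (this is fine on the open part of $X_{n+1}$ where $y_1,y_2,y_3$ are distinct and disjoint from $Z$; the degenerate boundary cases collapse into $X_{n+2}$ anyway, so nothing is lost). Your treatment of the reverse inclusion via Lemma~\ref{contained}, and your closing remark that the statement must be read with the pencil $\langle N_0,N_1\rangle$ tied to a specific length-$(n-1)$ subscheme $Z\subset\Sigma$ (equivalently, as a union over all such pencils), is exactly right and matches the paper's own use of that lemma.

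The paper's argument for the forward inclusion is, however, more elementary and completely independent of Theorem~\ref{split3conGrass}. Inside $\PP{}(K[N_0,N_1]_3)\cong\PP3$ it considers the twisted cubic $C=\{[(\alpha N_0+\beta N_1)^3]\}$, which lies in $\nu_3(\PP n)\subset\GG(n-1,n+2)$. Any $\Lambda\in\PP{}(K[N_0,N_1]_3)$ not on $C$ lies on a bisecant or tangent line $\ell$ to $C$; if moreover $\Lambda\in\GG(n-1,n+2)$, then $\ell$ meets the Grassmannian in at least three points and hence, since $\GG(n-1,n+2)$ is cut out by quadrics, $\ell\subset\GG(n-1,n+2)$. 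Lemma~\ref{rette in Grass} then forces the two Veronese endpoints of $\ell$ to share $n-1$ points of $\Sigma$, and these lie in the base locus of the pencil represented by $\ell$, hence in $\Lambda$. This bypasses the entire apparatus of Sections~\ref{osculating}--\ref{Split e Grass}; your route, by contrast, buys brevity at the cost of invoking the hardest result in the paper.
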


\begin{proof} 
Take $\Lambda\in\GG(n-1,n+2)$. If $\Lambda \cap \Sigma$ contains
a subscheme $Z\subset\Sigma$ of length $n-1$, Lemma
\ref{contained} implies that there exist linear forms
$N'_0,N'_1\in K[x_{0},\ldots ,x_{n}]$ such that
$\GG (n-1,n+2) \cap\PP{}(K[N'_0,N'_1]_3)= \{ \Lambda \in
\GG(n-1, n+2)\ \ | \; \Lambda \cap \Sigma\supset Z\}$. In
particular, $N_0,N_1\in K[N'_0,N'_1]$, so that
$K[N_0,N_1]=K[N'_0,N'_1]$ and one of the wanted inclusions
follows.

Reciprocally, assume $\Lambda\in\PP{}(K[N_0,N_1]_3)$. Then we can
consider the twisted cubic $C\subset\PP{}(K[N_0,N_1]_3)$ defined
by the classes of the type $(\alpha N_0
+\beta N_1)^3\in K[N_{0}, N_{1}]_{3}$. If $\Lambda\in C$, in
particular $\Lambda\in\nu_3(\PP n)$, so that it contains $n$
points of $\Sigma$. If $\Lambda\not\in C$, then it belongs to a
bisecant (or tangent) line to $\Sigma$. This line is thus
trisecant to $\GG(n-1,n+2)$, hence it is contained in
$\GG(n-1,n+2)$. The other inclusion follows now from Lemma
\ref{rette in Grass}.
\end{proof}

\begin{corol} If $M \in K[N_{0}, N_{1}]_{3} \cap \GG(n-1,n+2)$,
with $N_{0}, N_{1}$ generic linear forms, then $M\in
\nu_{3}(\PP n)$.
\end{corol}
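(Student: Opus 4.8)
The plan is to deduce this statement directly from Proposition \ref{forme binarie}, which already identifies the intersection $\GG(n-1,n+2)\cap\PP{}(K[N_0,N_1]_3)$ with the set of $(n-1)$-spaces $\Lambda$ such that $\deg(\Lambda\cap\Sigma)\ge n-1$. So the only thing left to prove is that, when $N_0,N_1$ are generic, an element $M$ of that intersection actually lies on the twisted cubic $C\subset\PP{}(K[N_0,N_1]_3)$ of cubes, i.e. $M\in\nu_3(\PP n)$, rather than merely on a chord of $C$.

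First I would recall the geometry inside $\PP{}(K[N_0,N_1]_3)\cong\PP3$: the curve $C$ of classes $[(\alpha N_0+\beta N_1)^3]$ is a twisted cubic, its chordal (secant) variety is all of $\PP3$, and a point $M\in\PP3\setminus C$ lies on a unique chord (or tangent) line $\ell$ of $C$. By Lemma \ref{rette in Grass} (or its proof), if such a chord were contained in $\GG(n-1,n+2)$ then its two endpoints $\Lambda_1,\Lambda_2\in\nu_3(\PP n)$ would share $n-1$ points of $\Sigma$; but these two endpoints correspond to $(\alpha_iN_0+\beta_iN_1)^3$, which as elements of $\PP{}(R_1)$ are the $d$-th powers of two specific linear forms in the pencil $\langle N_0,N_1\rangle$. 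Via Lemma \ref{nu_d(Pn)intG(n-1,n+d-1)}(ii) and the identification of Remark \ref{identificazione}, the $\Sigma$-scheme cut by such a subspace is determined by the zero divisor on $\PP1$ of the corresponding binary form; two distinct linear forms in $\langle N_0,N_1\rangle$ (after the identification $R_1\cong K[t_0,t_1]_n$) have coprime $n$-th powers as binary forms of degree $n$ unless the pencil $\langle N_0,N_1\rangle$ is special. For generic $N_0,N_1$ this cannot happen, so no chord of $C$ lies in $\GG(n-1,n+2)$, forcing $M\in C=\nu_3(\PP n)\cap\PP{}(K[N_0,N_1]_3)$.

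The step I expect to be the main obstacle is pinning down exactly which genericity hypothesis on $N_0,N_1$ is needed and why it suffices: one must check that for generic $N_0,N_1$ the two binary forms of degree $n$ attached to any two distinct points of the pencil share fewer than $n-1$ common zeros on $\PP1$, so that the conclusion "$\Lambda_1,\Lambda_2$ share $\ge n-1$ points of $\Sigma$" of Lemma \ref{rette in Grass} is violated. Concretely, writing $N_0,N_1$ as binary forms $p_0,p_1\in K[t_0,t_1]_n$ under the identification of Remark \ref{identificazione}, a common linear factor of $\alpha_1 p_0+\beta_1 p_1$ and $\alpha_2 p_0+\beta_2 p_1$ is a common linear factor of $p_0$ and $p_1$; for generic $p_0,p_1$ they have no common factor at all, so any two distinct members of the pencil are coprime and cut disjoint divisors on $\Sigma$. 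Hence $\deg(\Lambda_1\cap\Lambda_2\cap\Sigma)=0<n-1$, contradicting the lemma unless $M\in C$. This finishes the proof. \qed
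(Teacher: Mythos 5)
Your strategy is the paper's: via Proposition \ref{forme binarie} and the geometry of the twisted cubic $C\subset\PP{}(K[N_0,N_1]_3)$ of cubes, rule out for generic $N_0,N_1$ that $M$ lies off $C$; and your coprimality computation (a common factor of two distinct members $\alpha_ip_0+\beta_ip_1$ of the pencil is a common factor of $p_0,p_1$, which is trivial for generic $N_0,N_1$) is a correct and useful unpacking of the paper's one-line assertion that $N_0,N_1$ would have to be ``special''. However, as written there is a genuine jump at the end: from ``no chord of $C$ lies in $\GG(n-1,n+2)$'' you conclude ``$M\in C$'', but these statements are only connected once you show that for $M\in\GG(n-1,n+2)\setminus C$ the unique secant (or tangent) line of $C$ through $M$ is forced to lie inside the Grassmannian. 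That is the trisecant step the paper supplies (inside the proof of Proposition \ref{forme binarie}, and again as Lemma \ref{lemma}): the line meets $\GG(n-1,n+2)$ in at least three points counted with multiplicity, namely the two points of $C\subset\nu_3(\PP n)\subset\GG(n-1,n+2)$ together with $M$, and since the Grassmannian is an intersection of quadrics such a line must be entirely contained in it. Without this step your contradiction never involves $M$ at all, and Lemma \ref{rette in Grass} has no line inside $\GG(n-1,n+2)$ to be applied to.

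A second, smaller omission is the degenerate case in which $M$ lies on a tangent line of $C$ rather than on a proper chord: then the ``two endpoints'' coincide and Lemma \ref{rette in Grass}, which requires two distinct points of $\nu_3(\PP n)$ spanning the line, does not literally apply. This case can be handled by noting that then $M\in T_{[L^3]}(V)\cap\GG(n-1,n+2)$ for some $L$ in the pencil, so $M=[L^2L']$ with $L'$ also in $\langle N_0,N_1\rangle$ (a form of $K[N_0,N_1]_3$ factors into linear forms of the pencil), and by Corollary \ref{tau and grass} together with Proposition \ref{grazie} (or Proposition \ref{punti in osculanti}) the $n$-secant spaces attached to $[L^3]$ and $[L'^3]$ share $n-1$ points of $\Sigma$; your coprimality argument then gives the same contradiction. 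With the trisecant step added and this case mentioned, your proof is complete and coincides in substance with the paper's.
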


\begin{proof} If $M$ is a binary form contained into the Grassmannian $\GG(n-1,n+2)$, then by Proposition \ref{forme binarie} the linear forms $N_{0}, N_{1}$ must be ``special'', i.e. they have at least $n-1$ roots in common.
\end{proof}

\begin{lemma}\label{lemma} Let $A,B\in \nu_{d}(\PP n)$. If
there exists a point $C \in \Sec_1(\nu_d(\PP n))\cap \GG
(n-1,n+d-1)$ such that $C\in <A,B>\smallsetminus \nu_{d}(\PP
n)$, then $<A,B>\subset \GG (n-1,n+d-1)$.
\end{lemma}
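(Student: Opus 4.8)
The plan is to combine two classical facts: that the Grassmannian $\GG(n-1,n+d-1)$, in its Pl\"ucker embedding, is the common zero locus of the quadratic Pl\"ucker relations, and that a line meeting a quadric hypersurface in three distinct points must be contained in it.

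First I would note that $A,B\in\nu_d(\PP n)\subseteq\GG(n-1,n+d-1)$ by Lemma \ref{nu_d(Pn)intG(n-1,n+d-1)} (together with the identification of ambient spaces of Remark \ref{identificazione}), so $A$ and $B$ already lie on the Grassmannian. Next I would check that $A,B,C$ are pairwise distinct. If $A=B$, then $<A,B>$ would be a single point, forcing $C=A\in\nu_d(\PP n)$, against the hypothesis $C\notin\nu_d(\PP n)$; hence $A\neq B$ and $L:=<A,B>$ is a genuine line. Similarly $C\neq A$ and $C\neq B$, since $A,B\in\nu_d(\PP n)$ while $C\notin\nu_d(\PP n)$. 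Thus $L$ is a line containing the three distinct points $A,B,C$, all of which lie on $\GG(n-1,n+d-1)$.

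Then, for each quadratic Pl\"ucker relation $q$, the restriction of $q$ to $L$ is, after a linear parametrization $L\cong\PP 1$, a binary form of degree $2$ vanishing at the three distinct points corresponding to $A,B,C$; since a nonzero binary quadratic form has at most two zeros, this restriction is identically zero, i.e. $L$ lies in the quadric $\{q=0\}$. Intersecting over all Pl\"ucker relations gives $L\subseteq\GG(n-1,n+d-1)$, which is exactly the assertion.

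I do not expect any real obstacle here; the only point requiring a little care is the distinctness of $A$, $B$, $C$, which is precisely what the hypothesis $C\in<A,B>\smallsetminus\nu_d(\PP n)$ guarantees. (One may also observe that the condition $C\in\Sec_1(\nu_d(\PP n))$ is automatically satisfied as soon as $C$ lies on a secant line of $\nu_d(\PP n)$, so it is really only $C\in\GG(n-1,n+d-1)$ and $C\notin\nu_d(\PP n)$ that are being used.)
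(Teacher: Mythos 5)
Your argument is correct and is essentially the paper's own proof: the three distinct points $A,B,C$ lie on the line $<A,B>$ and on $\GG(n-1,n+d-1)$, which is an intersection of quadrics (the Pl\"ucker relations), so every such quadric vanishes identically on the line and hence $<A,B>\subset\GG(n-1,n+d-1)$. Your added checks of distinctness and the remark that only $C\in\GG(n-1,n+d-1)\smallsetminus\nu_d(\PP n)$ is really used are fine but do not change the route.
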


\begin{proof} The set of the three points  $\{A,B,C\}$ is contained in the
intersection $<A,B>\cap\, \GG (n-1,n+d-1)$. Since the Grassmannian is an
intersection of quadrics, it cannot exist a point $D\in<A,B>$ but $D\notin \GG (n-1,n+d-1)$ then $<A,B>\subset \GG (n-1,n+d-1)$.
\end{proof}

\begin{propos}\label{hint} The intersection between
$\Sec_1(\nu_d(\PP n))$ and $\GG(n-1,n+d-1)$ is contained in
$\{\Lambda \in \GG(n-1,n+d-1) \; | \; \deg (\Lambda \cap
\Sigma)\geq n-1\}$.
\end{propos}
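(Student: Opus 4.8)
The plan is to reduce the inclusion to the two preceding lemmas, to Lemma \ref{contained}, and to Corollary \ref{tau and grass}, after separating those points of $\Sec_1(\nu_d(\PP n))$ that lie on an honest secant line from those that do not. The only classical input needed is the description of the first secant variety of a smooth irreducible variety: every point of $\Sec_1(\nu_d(\PP n))$ lies either on a secant line $\langle A,B\rangle$ with $A\neq B$ both on $\nu_d(\PP n)$, or on a projective tangent line to $\nu_d(\PP n)$, hence in $\tau(\nu_d(\PP n))$. Indeed, a point of $\Sec_1(\nu_d(\PP n))$ not lying on any secant line is, by definition of the closure, a limit of points $C_t\in\langle A_t,B_t\rangle$ with $A_t\neq B_t\in\nu_d(\PP n)$; by a standard specialization argument one may assume $A_t\to A$ and $B_t\to B$, and then either $A\neq B$, so that the limit point already lies on $\langle A,B\rangle$, or $A=B$, in which case the limiting line is contained in the tangent space $T_A(\nu_d(\PP n))\subset\tau(\nu_d(\PP n))$.

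Granting this, I would first dispose of the tangential case. If $C\in\tau(\nu_d(\PP n))\cap\GG(n-1,n+d-1)$, Corollary \ref{tau and grass} exhibits $C$ as a member of one of the pencils $F(\langle\cdots\rangle,\langle\cdots\rangle)$ appearing in \eqref{tangential and grass}; the base locus of each such pencil is the $(n-2)$-plane $\langle r_1y_1,\dots,(r_i-1)y_i,\dots,r_ky_k\rangle$, which is the span of a subscheme of $\Sigma$ of length $(\sum_j r_j)-1=n-1$. Hence $C$ contains that length-$(n-1)$ subscheme of $\Sigma$, so $\deg(C\cap\Sigma)\ge n-1$. This also takes care of the subcase $C\in\nu_d(\PP n)$ (where in fact $\deg(C\cap\Sigma)=n$), since $\nu_d(\PP n)\subset\tau(\nu_d(\PP n))$.

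It then remains to treat the main case $C\in\langle A,B\rangle\setminus\nu_d(\PP n)$ with $A\neq B\in\nu_d(\PP n)$. Here I would argue as follows: since $C\in\Sec_1(\nu_d(\PP n))\cap\GG(n-1,n+d-1)$ lies on $\langle A,B\rangle$ and outside $\nu_d(\PP n)$, Lemma \ref{lemma} gives $\langle A,B\rangle\subset\GG(n-1,n+d-1)$; then Lemma \ref{rette in Grass} shows that $A$ and $B$, as $(n-1)$-planes of $\PP{n+d-1}$, share a subscheme $Z\subset\Sigma$ of length $n-1$ (note $A\neq B$ forces them to share at most $n-1$, not $n$, points of $\Sigma$, so this length is exactly $n-1$). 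Finally, applying Lemma \ref{contained} with $s=n-1$, the locus $\{\Lambda\in\GG(n-1,n+d-1)\mid\Lambda\cap\Sigma\supseteq Z\}$ is, in the Pl\"ucker space $\PP{}(R_d)$, the \emph{linear} subspace $\PP{}(K[N_0,N_1]_d)$ for suitable linear forms $N_0,N_1$; since it contains $A$ and $B$, it contains the whole line $\langle A,B\rangle$, in particular $C$, which therefore contains $Z$ and satisfies $\deg(C\cap\Sigma)\ge n-1$.

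The step I expect to be the main obstacle is the very first reduction: ensuring that the closure in the definition of $\Sec_1(\nu_d(\PP n))$ contributes nothing beyond secant lines and $\tau(\nu_d(\PP n))$. (An alternative packaging would be to observe that $\{\Lambda\in\GG(n-1,n+d-1)\mid\deg(\Lambda\cap\Sigma)\ge n-1\}$ is closed, being the image of the complete incidence variety over $\mathrm{Sym}^{n-1}\Sigma$, and to control the generic point of each component of $\Sec_1(\nu_d(\PP n))\cap\GG(n-1,n+d-1)$; but this still requires knowing where the non-transverse components sit, so it does not really circumvent the difficulty.) Once that reduction is in place, the remainder is a purely formal concatenation of Lemmas \ref{lemma}, \ref{rette in Grass}, \ref{contained} and Corollary \ref{tau and grass}, the one routine check being that $Z$ has length exactly $n-1$.
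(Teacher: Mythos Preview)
Your argument is correct and follows essentially the same route as the paper: for a point on an honest secant line $\langle A,B\rangle$, invoke Lemma~\ref{lemma} to get the whole line inside $\GG(n-1,n+d-1)$, and then use the pencil structure to conclude that $A$ and $B$ share a length-$(n-1)$ subscheme of $\Sigma$, which is inherited by every member of the pencil.

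There are two minor differences worth noting. First, the paper simply asserts that any $A\in\Sec_1(\nu_d(\PP n))\setminus\nu_d(\PP n)$ lies on some secant line $\langle\pi_1,\pi_2\rangle$, without separately treating the tangential locus; your explicit handling of $\tau(\nu_d(\PP n))\cap\GG(n-1,n+d-1)$ via Corollary~\ref{tau and grass} makes the argument strictly more complete. Second, in the concluding step the paper argues directly from the pencil (the base $\PP{n-2}$ is spanned by the $n-1$ shared points of $\Sigma$, and every member of the pencil contains it), whereas you invoke Lemma~\ref{contained} to identify the locus $\{\Lambda\supseteq Z\}$ with the linear $\PP{}(K[N_0,N_1]_d)$ and use linearity to absorb the whole secant line; this is the same content packaged differently, and your route has the small advantage that it does not require checking that $Z$ actually spans the base $\PP{n-2}$ of the pencil.
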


\begin{proof}
Let us take a point $A\in \Sec_1(\nu_d(\PP n)\cap \GG
(n-1,n+d-1))\smallsetminus \nu_d(\PP n)$, then there exist $\pi_{1},\pi_{2}\in
\nu_{d}(\PP n)$ such that $A\in <\pi_{1},\pi_{2}>$. Since
$\nu_{d}(\PP n)$ is the locus of the $(n-1)$-spaces of $\PP
{n+d-1}$ that are $n$-secant to $\Sigma$, there exist
$P_{1},\ldots ,P_{n},Q_{1}, \ldots , Q_{n}\in \Sigma$
such that $\pi_{1}=<P_{1}, \ldots ,P_{n}>$ and $\pi_{2}=<Q_{1},
\ldots ,Q_{n}>$.
 Therefore $<\pi_{1},\pi_{2}>\subset
(\Sec_1(\nu_d(\PP n))\subset \Split_d(\PP n)$. By the Lemma
\ref{lemma} we have that $<\pi_{1},\pi_{2}>\subset \GG (n-1,n+d-1)$. The span $<\pi_{1},\pi_{2}>$ parameterizes a pencil of $(n-1)$-spaces contained in $\PP n\subset \PP {n+d-1}$ and  containing a
$\PP {n-2}$. Then $P_1,\ldots ,P_n,Q_1,\ldots ,Q_n$ lie on a
$\PP n$ instead of being generic in $<\Sigma>=\PP
{n+d-1}$, hence $\sharp\{ P_1,\ldots ,P_n,Q_1,\ldots ,Q_n
\}=n+1$. \end{proof}

\begin{propos}\label{grass e sec1 nu3}
Let $V=\nu_{3}(\PP n)\subset \GG (n-1,n+2)$, then  
$$\Sec_{1}(V) \cap \GG (n-1,n+2)= \{\Lambda \in \GG (n-1,n+2)\;
| \; \deg(\Sigma\cap \Lambda ) \geq n-1 \}.$$ 
\end{propos}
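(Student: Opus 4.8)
The plan is to prove the two inclusions separately, using the general results already established. For the inclusion $\Sec_1(V)\cap\GG(n-1,n+2)\subseteq\{\Lambda\mid\deg(\Sigma\cap\Lambda)\ge n-1\}$, there is essentially nothing new to do: this is exactly the content of Proposition \ref{hint}, applied with $d=3$. So the real work is the reverse inclusion: every $\Lambda\in\GG(n-1,n+2)$ with $\deg(\Sigma\cap\Lambda)\ge n-1$ actually lies on a secant line to $V=\nu_3(\PP n)$.

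First I would fix such a $\Lambda$ and let $Z\subseteq\Sigma\cap\Lambda$ be a subscheme of length $n-1$. By Lemma \ref{contained}, there are linear forms $N_0,N_1\in K[x_0,\dots,x_n]$ so that the locus $G'$ of $(n-1)$-spaces containing $Z$ is identified, as a subset of $\GG(n-1,n+2)$, with $\PP{}(K[N_0,N_1]_3)$; in particular $\Lambda\in G'$ is represented by a binary cubic $F\in K[N_0,N_1]_3$. Over the algebraically closed field $K$, every binary cubic factors as a product of three linear forms in $N_0,N_1$, hence $F=M_1M_2M_3$ with $M_i\in K[N_0,N_1]_1\subseteq R_1$. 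Now recall from Remark \ref{Split-osc} that a binary cubic divisible by a square of a linear form automatically splits and lies in $\tau(V)\subseteq\Sec_1(V)$; so if $F$ has a repeated factor we are already done. Otherwise $F=M_1M_2M_3$ with the three factors pairwise non-proportional, and I want to exhibit $\Lambda=[M_1M_2M_3]$ as a point of $\Sec_1(V)$. The key identity is the classical one expressing a product of three distinct linear forms as a linear combination of two cubes of linear forms in the pencil they span: since $M_1,M_2,M_3\in K[N_0,N_1]_1$, one checks by a direct (Lagrange-interpolation type) computation in the $2$-dimensional space $K[N_0,N_1]_1$ that there exist scalars and linear forms $L,L'\in K[N_0,N_1]_1$ with $M_1M_2M_3=\alpha L^3+\beta L'^3$; equivalently, the twisted cubic $C\subset\PP{}(K[N_0,N_1]_3)$ of cubes $[(\lambda N_0+\mu N_1)^3]$ has the property that any point of $\PP{}(K[N_0,N_1]_3)$ lies on a secant or tangent line of $C$, because a plane twisted cubic has this property (its secant variety is the whole $\PP3$). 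Under the identification of Lemma \ref{contained}, $C$ is precisely $\nu_3(\PP{}(K[N_0,N_1]_1))\subseteq V$, so the secant (or tangent) line through $[F]$ is a secant (or tangent) line to $V$, and therefore $\Lambda\in\Sec_1(V)$.

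The only point that needs a little care — and the main obstacle — is making sure the secant line to $C$ found inside $\PP{}(K[N_0,N_1]_3)$ really does sit inside $\GG(n-1,n+2)$, i.e. that it is a genuine secant line of $V$ as a subvariety of the Pl\"ucker space and not merely a line in an abstract $\PP3$. This is handled by Lemma \ref{lemma} (with $d=3$): the two endpoints of the secant line lie on $C\subseteq V\subseteq\GG(n-1,n+2)$, and the point $[F]=\Lambda$ on the line is assumed to lie in $\GG(n-1,n+2)$ as well, so since the Grassmannian is cut out by quadrics the whole line is contained in $\GG(n-1,n+2)$; in particular the two endpoints and $\Lambda$ witness $\Lambda\in\Sec_1(V)\cap\GG(n-1,n+2)$. (In the tangent-line case one invokes instead the fact, from Remark \ref{Split-osc} and Corollary \ref{tau and grass}, that $\tau(V)\subseteq\Sec_1(V)$ and that a binary cubic with a repeated root represents a point of $\tau(V)$.) Combining the two inclusions gives the stated equality.
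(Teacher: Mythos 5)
Your argument is correct, and it rests on the same classical fact as the paper's proof -- that every point of the $\PP3$ of binary cubics lies on a secant or tangent line of the twisted cubic of cubes -- but it gets there by a different vehicle. The paper argues geometrically: it projects $\PP{n+2}$ from $\langle Z\rangle$ to $\PP3$, observes that $\pi(\Lambda)$ lies on a bisecant or tangent line to the image rational normal cubic, and pulls that line back to a pencil of $(n-1)$-planes through $\langle Z\rangle$ inside a $\PP n$ whose two distinguished members are $n$-secant to $\Sigma$, i.e.\ points of $V$; hence $\Lambda$ lies on the line they span in the Pl\"ucker space. You instead stay on the polynomial side: by Lemma \ref{contained} the locus $G'$ of $(n-1)$-planes containing $Z$ has Pl\"ucker ambient space $\PP{}(K[N_0,N_1]_3)$, so $\Lambda$ is a binary cubic $F$, and the canonical forms of binary cubics ($\alpha L^3+\beta L'^3$ when the roots are distinct, $L^2M$ otherwise) place $[F]$ either on an honest secant line of the twisted cubic $\nu_3(\PP{}(K[N_0,N_1]_1))\subset V$, or in $\tau(V)\subseteq\Sec_1(V)$ via Remark \ref{Split-osc}. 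This is closer in spirit to the paper's proof of Proposition \ref{forme binarie} than to its proof of the present statement; what it buys is that the projection step is replaced by the algebraic identification already set up in Lemma \ref{contained}, which the paper itself exploits in the same way in Proposition \ref{Xn+1 in Split}. One remark: the concern in your final paragraph is misplaced. You do not need the secant line to be contained in $\GG(n-1,n+2)$, so Lemma \ref{lemma} is not needed at all -- the statement only asks for $\Lambda\in\Sec_1(V)$, and $\Lambda\in\GG(n-1,n+2)$ is the hypothesis. The point that genuinely needs (and has) justification is the one you mention only in passing: $K[N_0,N_1]_3$ is a linear subspace of $R_3$, so $\PP{}(K[N_0,N_1]_3)$ sits linearly inside $\PP{}(R_3)$, its cubes $(\lambda N_0+\mu N_1)^3$ are honest points of $V$, and therefore a secant (or tangent) line of that twisted cubic is an honest secant (or limit of secant) line of $V$; once that compatibility is granted, the proof is complete.
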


\begin{proof}
Proposition \ref{hint} presents one inclusion. Let's then prove
that $\{\Lambda \in \GG (n-1,n+2)\; | \; \deg(\Sigma\cap \Lambda
) \geq n-1 \}\subseteq \Sec_{1}(\nu_{3}(\PP n))\cap \GG
(n-1,n+2)$.

Let $\Lambda\in \GG (n-1,n+2)$ be a subspace containing a subscheme
$Z\subset\Sigma$ of length $n-1$.

Consider the projection $\pi: \PP {n+2}\rightarrow \PP 3$ from
$<Z>\subset \PP{n+2}$. Observe that all $\widetilde{\Lambda}\in
\GG (n-1, n+2)$ that intersect $\Sigma$ in degree $n$ are sent by
$\pi$ in the rational normal cubic $\Sigma'\subset\PP 3$, and
$\pi(\Lambda)=Q$ does not belong to such a cubic.
\\
A line $L\in \PP 3$ passing through $Q$ can be or tangent or
bisecant to the cubic.

If $L$ is the tangent line to $\Sigma'$ at a point $y'$, consider
$y\in\Sigma$  the point of $\Sigma$ whose image is $y$. Then
$<Z>\subset\Sigma\subset<Z+2y>$, so that
$\Lambda\in\tau(V)$.

If it is bisecant consider the $\PP {n}$ obtained as
$\pi^{-1}(L)=H\subset \PP {n+2}$. Since $L$ intersects the
rational normal cubic in two points, then $H$ contains two $\PP
{n-1}$'s, say $\Lambda_{1}$ and $\Lambda_{2}$, that intersect
$\Sigma$ in degree $n$, therefore from one side we can assume that
$H$ is spanned by them, from the other side $H$ can intersect
$\Sigma$ at most in degree $n+1$, hence $\Lambda_{1}$ and
$\Lambda_{2}$ have a $0$-dimensional scheme of degree $n-1$ on
$\Sigma$ in common.

Therefore we have found that an element $\Lambda\in \{\Lambda \in \GG (n-1,n+2)\; | \; \deg(\Sigma\cap \Lambda ) \geq n-1 \}$ belongs to a pencil of $\PP {n-1}$'s, that is a line in the Grassmannian and in particular such a line is spanned by two points belonging to $\GG (n-1,n+2)\cap V$, therefore $\Lambda \in \Sec_{1}(V)\cap \GG (n-1,n+2)$.
\end{proof}



\begin{thebibliography}{Dillo 83}

\bibitem[\textbf{Ad}]{Ad} {B. \AA dlandsvik. \emph {Varieties
with an extremal number of degenerate higher secant
varieties.} J. Reine Angew. Math. \textbf {392} (1988),
16-26.}

\bibitem[\textbf{AH}]{AH}{J. Alexander, A. Hirschowitz}.
\emph{Polynomial interpolation in several variables.} J. of Alg.
Geom.\textbf{4} (1995), 201-222.

\bibitem[\textbf{AOP1}]{AOP1}{H. Abo, G. Ottaviani,
C. Peterson}, \emph{Induction for secant varieties of Segre
varieties.}  Trans. Amer. Math. Soc.  \textbf{361}  (2009),  no.
2, 767-792.

\bibitem[\textbf{AOP2}]{AOP2}{H. Abo, G. Ottaviani, C. Peterson}, \emph{Non-Defectivity of Grassmannians of planes.} 
preprint http://arxiv.org/abs/0901.2601.

\bibitem[\textbf{AP}]{AP}{ E. Arrondo, R. Paoletti,} 
\emph{Characterization of Veronese varieties via projections in Grassmannians},
Projective varieties with unexpected properties (eds. Ciliberto, Geramita, Mir—-Roig, Ranestad), De Gruyter (2005), 1-12.

\bibitem[\textbf{Ba}]{Ba} {E. Ballico}; \emph{On the secant
varieties to the tangent developable of a Veronese variety},
Journal of Algebra,
Vol. 288, Issue 2,  (2005), Pages 279-286 

\bibitem[\textbf{BCGI}]{BCGI} {A. Bernardi, M.V. Catalisano,
A. Gimigliano e M. Id\`a}; \emph{Osculating varieties of
Veronese Varieties and their higher secant varieties.}
Canadian Journal of Math Vol. \textbf{59} (3), 2007 pp.
488-502. 

\bibitem[\textbf{BO}]{BO} {M.C Brambilla, G. Ottaviani};
\emph{On the Alexander-Hirschowitz theorem.}  J. Pure Appl.
Algebra 
\textbf{212} (2008),  no. 5, 1229-1251.

\bibitem[\textbf{C}]{C}{E. Carlini} \emph{Codimension one decompositions and Chow varieties}, ``Projective Varieties with Unexpected Properties'', pp. 67-79, editors C. Ciliberto, B. Harbourne, R. MiroÕ-Roig, K. Ranestad, A. Geramita, Walter de Gruyter (2005).

\bibitem[\textbf{CGG1}]{CGG1}{ M. V. Catalisano, A. V.
Geramita, A. Gimigliano, \emph{Secant varieties of Grassmann
Varieties}, Proc. Am. Math. Soc, \textbf{133}, (2004),
pp. 633 - 642.}

\bibitem[\textbf{CGG2}]{CGG2}{M. V. Catalisano, A. V.
Geramita, A. Gimigliano, \emph{On the secant varieties to the
tangential varieties of a Veronesean}, Proc. Am. Math. Soc,
\textbf{130}, (2001), no. 4, 975-985.}

\bibitem[\textbf{CGG3}]{CGG3}{M. V. Catalisano, A. V.
Geramita, A. Gimigliano} , \emph{Higher Secant Varieties of Segre-Veronese varieties}, In: Atti del Convegno: Varieties with unexpected properties. Siena,  Giugno 2004. . BERLIN: W. de Gruyter.  (2005)  pp. 81 - 107.

\bibitem[\textbf{CGG4}]{CGG4}{M. V. Catalisano, A. V.
Geramita, A. Gimigliano}, \emph{Tensor rank, secant varieties to Segre varieties and fat points in multiprojective spaces}, Lin. Alg.
Appl., \textbf{355}, (2002), 263-285. (see also the errata of the publisher: \textbf{367} (2003), 347-348)


\bibitem[\textbf{CLO}]{CLO} {D. Cox, J. Little, D. O' Shea;
\emph{Ideal, varieties and algorithms.}, Springer.}

\bibitem[\textbf{CoCoA}]{CoCoA} {A. Capani, G. Niesi, L.
Robbiano
\emph{CoCoA, A system for doing Computations in Commutative Algebra},
  Available via anonymous ftp from: cocoa.dima.unige.it.}

\bibitem[\textbf{Eh}]{Eh}{ R. Ehrenborg, \emph{On Apolarity
and Generic Canonical Forms},
 Journal of Algebra \textbf{213}, 167-194 (1999).} 

\bibitem[\textbf{GKZ}]{GKZ}{I.M. Gel'fand, M. M. Kapranov, A. V. Zelevinsky, \emph{Discriminants, resultants, and multidimensional determinants}, Mathematics: Theory  \& Applications, Birkh¬auser
Boston Inc., Boston, MA, 1994.}

\bibitem[\textbf{McG}]{McG}{ B. Mc Gillivray} \emph{A
probabilistic algorithm for the secant defect of Grassmann
varieties},  Linear Algebra and its Applications,
\textbf{418} (2006), 708-718. 

\bibitem[\textbf{Mu}]{Mu} {F. D. Murnaghan, \emph{The Theory
of Group Representations} The Johns Hopkins Press, Baltimore,
1938.}

\bibitem[\textbf{Se}]{Se}{B. Segre, \emph{Bertini forms and
hessian matrices}, J. of the London Math. Soc., s1-\textbf{26},
(1951), 164-176.}

\bibitem [\textbf {Te}]{Te} {A.Terracini. \emph {Sulle $V_k$
per cui la variet\`a degli $S_h$ $(h+1)$-seganti ha
dimensione minore dell'ordinario.} Rend. Circ. Mat. Palermo
\textbf {31} (1911), 392-396.}

\bibitem [\textbf{W}]{W}{ K.Wakeford. On canonical forms.
Proc. London Math. Soc. (2) \textbf{18} (1919/20). 403-410. }

\end{thebibliography}
\end{document}